\def\R{\mathbb R}
\def\C{\mathbb C}
\def\T{\mathbb T}
\def\Z{\mathbb Z}
\def\CP{\mathbb C\mathrm P}
\def\n{\mathfrak{n}}
\def\t{\mathfrak{t}}
\def\cone{\mathrm{cone}\,}
\def\Re{\mathrm{Re}\,}
\def\Im{\mathrm{Im}\,}
\def\del{\partial}
\def\zbar{\overline{z}}
\def\wbar{\overline{w}}
\newtheorem{theorem}{Theorem}[section]
\newtheorem{prop}[theorem]{Proposition}
\newtheorem{corollary}[theorem]{Corollary}
\theoremstyle{definition}
\newtheorem{notation}[theorem]{Notation}
\theoremstyle{remark}
\newtheorem{remark}[theorem]{Remark}
\begin{document}
\title{Toric Poisson Structures}
\author{Arlo Caine}

\begin{abstract}
Let $T_\C$ be a complex algebraic torus and let $X(\Sigma)$ be a
smooth projective $T_\C$-variety.  In this paper, a real
$T_\C$-invariant Poisson structure $\Pi_\Sigma$ is constructed on
the complex manifold $X(\Sigma)$, the symplectic leaves of which are
the $T_\C$-orbits in $X(\Sigma)$. It is shown that each leaf admits
a Hamiltonian action by a sub-torus of the compact torus $T\subset
T_\C$. However, the global action of $T_\C$ on
$(X(\Sigma),\Pi_\Sigma)$ is Poisson but not Hamiltonian.  The main
result of the paper is a lower bound for the first Poisson
cohomology of these structures.  For the simplest case,
$X(\Sigma)=\CP^1$, the Poisson cohomology is computed using a
Mayer-Vietoris argument and known results on planar quadratic
Poisson structures and in the example the bound is optimal. The
paper concludes with the example of $\CP^n$, where the modular
vector field with respect to a particular Delzant Liouville form
admits a curious formula in terms of Delzant moment data.  This
formula enables one to compute the zero locus of this modular vector
field and relate it to the Euclidean geometry of the moment simplex.
\end{abstract}

\maketitle


\section{Introduction}

Symplectic structures on compact toric manifolds $X(\Sigma)$  have
been extensively studied (cf. \cite{Delzant}, \cite{Guillemin},
\cite{LermanTolman}) providing a wealth of examples of compact
symplectic manifolds with symmetry. The purpose of this paper is to
construct and study a real Poisson structure $\Pi_\Sigma$ on
$X(\Sigma)$ whose symplectic leaves are the complex torus orbits, so
that $\Pi_\Sigma$ is not regular and yet has a dense open leaf.
Since degenerate Poisson structures play an important role in study
of homogeneous spaces (e.g.: groups \cite{EvensLu2},
\cite{LuWeinstein}; flag varieties \cite{EvensLu2}, \cite{FothLu1},
\cite{WebsterYakimov}; symmetric spaces \cite{CainePickrell},
\cite{FothLu2}, \cite{Xu}), it is interesting to consider such
Poisson structures in the almost homogeneous setting of toric
varieties.  In the natural system of holomorphic coordinates,
$\Pi_\Sigma$ turns out to be a real quadratic Poisson structure.
Structures of these types have found application in combinatorics
(\cite{Gekhtman},\cite{Yakimov}) and have been the focus of first
attempts at developing a theory of local invariants for Poisson
structures (\cite{DufourZung}, \cite{LiuXu}). In addition to the
construction of $\Pi_\Sigma$ and its local form, global properties
are addressed, such as the symplectic geometry of its leaves and the
modular class, the main result being the following lower bound for
the first Poisson cohomology (cf. section \ref{TheModularClass}).

\begin{theorem}
$\dim_\R H^1(X(\Sigma),\Pi_\Sigma)\ge 2n+1$ where $n$ is the complex
dimension of the maximal torus acting effectively on $X(\Sigma)$.
\end{theorem}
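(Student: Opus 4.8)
The plan is to produce $2n+1$ linearly independent classes in $H^1(X(\Sigma),\Pi_\Sigma)$, so the main task is to exhibit enough Poisson vector fields (cocycles) and to verify that they are not Hamiltonian (not coboundaries). Recall that $H^1$ of a Poisson manifold is the quotient of Poisson vector fields $\{Z : \mathcal L_Z \Pi_\Sigma = 0\}$ by the Hamiltonian vector fields $\{X_f = \Pi_\Sigma^\sharp(df)\}$. Since $\Pi_\Sigma$ is $T_\C$-invariant, every fundamental vector field of the $T_\C$-action is automatically a Poisson vector field; as $T_\C$ has complex dimension $n$, its real fundamental vector fields span a $2n$-dimensional space of Poisson cocycles. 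My first step would therefore be to write these $2n$ generators explicitly in the holomorphic coordinates on the dense open orbit and confirm they are genuinely Poisson.

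The $(2n+1)$-st class I would obtain from the modular vector field. The modular class is a distinguished element of $H^1$ attached to any Poisson manifold once a volume (or density) is chosen, and it is independent of that choice up to a Hamiltonian correction; the excerpt's abstract signals that the modular vector field is central to this paper. So the second step is to compute the modular vector field $Z_\Pi$ with respect to a convenient $T_\C$-invariant volume form (the Delzant Liouville form suggested later), check that it is a Poisson cocycle (which is automatic), and add its class to the list. The natural candidate for a spanning set is thus the $2n$ torus-generators together with $[Z_\Pi]$.

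The heart of the argument, and the step I expect to be the main obstacle, is proving linear independence of these $2n+1$ classes in cohomology, i.e. showing that no nontrivial real linear combination is Hamiltonian. A direct approach is to find, for each candidate combination, a function or geometric pairing that detects nontriviality. Two complementary tools suggest themselves. First, since the symplectic leaves are the $T_\C$-orbits and each admits a Hamiltonian action of a subtorus of the compact torus $T$ (an earlier result in the excerpt), I can restrict a putative Hamiltonian vector field to leaves, where the restriction must be the Hamiltonian vector field of the restricted function; any real combination of the torus generators that is globally Hamiltonian would force global equivariant moment-type data that the non-Hamiltonian nature of the $T_\C$-action forbids. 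The abstract explicitly states that the global $T_\C$-action is Poisson but \emph{not} Hamiltonian, which is exactly the statement that the span of the torus generators meets the Hamiltonian vector fields only in a proper subspace; quantifying this should yield the independence of the $2n$ torus classes. Second, for the modular class one separates it from the torus span by a characteristic-class or integral obstruction: the modular class is known to vanish precisely when the Poisson structure is unimodular, and one expects $\Pi_\Sigma$ to fail unimodularity, so $[Z_\Pi]$ survives and is independent of the torus-generated part.

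Concretely, I would organize the independence proof by passing to the dense open orbit where $\Pi_\Sigma$ is the explicit quadratic structure in holomorphic coordinates. There the Poisson complex becomes computable, and one can test coboundary-ness by solving $\Pi_\Sigma^\sharp(df) = Z$ for $f$ and showing no global solution exists. The likely obstruction to solvability is a residue or periods computation: a candidate Hamiltonian would have to be a multivalued function (for instance with logarithmic terms coming from the invariant one-forms $d\theta_i$ on the torus), and the monodromy or the failure of the would-be primitive to extend across the lower-dimensional boundary orbits in $X(\Sigma)$ provides the contradiction. Assembling these local computations into a global statement on the compact manifold $X(\Sigma)$ is where the real care is needed, and I would expect that step to consume the bulk of the proof.
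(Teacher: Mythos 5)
Your overall route is the same as the paper's: take the $2n$ classes coming from the fundamental vector fields of the $T_\C$-action (the fields $R_1,\ldots,R_n,D_1,\ldots,D_n$ attached to $\t+i\t$) together with the modular class, and detect non-Hamiltonicity through the failure of would-be primitives to extend. Your treatment of the $2n$ torus classes is essentially the paper's: on the dense open leaf $\Pi_\Sigma$ is nondegenerate, so a global Hamiltonian for a combination of these fields would have to agree, up to a constant, with the explicit $\log|w_\ell|^2$ and $\mathrm{Arg}(w_\ell)$ type functions, which do not extend across the lower-dimensional orbits. (One caveat: the bare statement ``the $T_\C$-action is Poisson but not Hamiltonian'' only says that not \emph{every} generator is Hamiltonian; injectivity of $\t+i\t\to H^1(X(\Sigma),\Pi_\Sigma)$ requires running the extension argument for \emph{every} nonzero linear combination, which your final paragraph does sketch.)

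The genuine gap is the $(2n+1)$-st class. You separate $[\theta_\mu]$ from the torus span only by appealing to non-unimodularity, i.e., $[\theta_\mu]\neq 0$. That is strictly weaker than what is needed: a nonzero class can perfectly well lie inside the $2n$-dimensional image of $\t+i\t$, so your argument as written yields only $\dim_\R H^1(X(\Sigma),\Pi_\Sigma)\ge 2n$. In this example the danger is real, not hypothetical: the paper computes that the modular vector field with respect to the coordinate volume $\lambda=i^n\,dw_1\wedge d\wbar_1\wedge\cdots\wedge dw_n\wedge d\wbar_n$ on a vertex chart is $\theta_\lambda=-\frac{1}{(h/\hbar)}\sum_{p,q}B_{pq}R_q$, a \emph{constant-coefficient} combination of the torus generators (and on $\CP^n$ the modular field for the Delzant volume is a combination of the $R_\ell$ with moment-map coefficients), so no pointwise computation, unimodularity test, or ``integral obstruction'' can separate the modular field from the torus fields. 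The separation in the paper is a dedicated global argument using compactness: write a $T$-invariant global volume as $\mu=f\lambda$ on the chart, so that $\theta_\mu=\theta_\lambda-\Pi_\Sigma^\#(d\log f)$; compactness of $X(\Sigma)$ forces $f$ to vanish at the chart boundary, hence $\log f$, and with it every primitive of $\theta_\mu-\theta_\lambda$, fails to extend to $X(\Sigma)$; therefore $\theta_\mu$ differs from the globally defined field $\theta_\lambda$ (which lies in the span of the $R_q$) by a non-Hamiltonian field, and $[\theta_\mu]$ cannot equal any constant-coefficient combination of $[R_1],\ldots,[R_n],[D_1],\ldots,[D_n]$. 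Without this (or an equivalent) argument the bound $2n+1$ does not follow.
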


A standard example used for illustrating differences between
symplectic and Poisson geometry is the quadratic Poisson structure
in the plane
\[\Pi=\textstyle{\frac{1}{2}}(x^2+y^2)\del_x\wedge \del_y.\]
It is degenerate only at the origin and the open symplectic leaf has
interesting topology.  Motivated by questions in \cite{Vaisman},
Nakanishi in \cite{Nakanishi} computed the Poisson cohomology of
this structure showing that it was finite dimensional and that, in
particular, $\dim_\R H^1(\R^2,\Pi)=2$.  It is invariant under the
action of the circle $\T$ by rotations of the plane about the origin
and on the open symplectic leaf $\R^2\setminus\{0\}$, this action is
Hamiltonian. However, the global action of $\T$ on $(\R^2,\Pi)$ is
Poisson but not Hamiltonian as the momentum map on
$\R^2\setminus\{0\}$ fails to extend to the origin, having a
logarithmic singularity there. This generates one dimension in
$H^1(\R^2,\Pi)$.  The other turns out to be generated by the radial
vector field $x\del_x+y\del_y$ on the plane.

The source of these infinitesimal outer automorphisms can be seen by
rewriting $\Pi$ in complex coordinates $z=x+iy$. Then $\Pi=i|z|^2
\del_{\zbar}\wedge \del_{z}$ and a short calculation shows that
$\Pi$ is invariant under the action of the complex torus $\T_\C$.
Indeed, if $\zeta\in \T_\C$, then $z\mapsto z\zeta=z'$ sends
$z\del_z\mapsto z'\del_{z'}$ and thus preserves $\Pi$.  The local
Hamiltonian functions for the action of $\T_\C/\T$ fail to extend to
the entirety of the open symplectic leaf for topological reasons,
let alone to all of $\C$. So, the real Lie algebra $\R+i\R$ of the
complex torus $\T_\C$ is included in $H^1(\C,\Pi)$.

In this paper, this example is generalized to Poisson structures on
smooth compact toric varieties.  Let $T$ be a compact abelian Lie
group.  Let $\Lambda^*\subset\t^*$ denote the dual group of $T$
viewed as lattice in the dual of its Lie algebra $\t$.  Given a
Delzant polytope $\Delta$ in $\t^*$, one can construct a smooth
algebraic manifold $X(\Sigma)$ from the data of its dual fan
$\Sigma$. The complexified torus $T_\C$ acts on $X(\Sigma)$ with an
open dense orbit. A version of this construction is presented in
section \ref{Construction_of_Varieties} in order to establish some
notation to be used in later sections.  The exposition there is a
blend of similar discussions in \cite{Proudfoot}, and chapter VII of
\cite{Audin}, in the smooth projective case.  Although standard, it
is included here for completeness as results in subsequent sections
depend on the details of this construction.

Roughly, $X(\Sigma)$ arises as the quotient of $\C^d$, where $d$ is
the number of facets of $\Delta$, by the action of a complex
algebraic torus.  The Poisson structure $\Pi_\Sigma$ is that
co-induced by the quotient map from the Poisson structure
$\Pi\oplus\Pi\oplus \dots \oplus\Pi$ on $\C^d$. On $X(\Sigma)$ there
is a distinguished system of complex algebraic coordinate charts
associated to the vertices of $\Delta$ and in these coordinates
$\Pi_\Sigma$ is a homogeneous quadratic Poisson structure.  The
symplectic leaves of the prototype on $\C^d$ are precisely the
orbits of $\T_\C^d$ on $\C^d$. Perhaps not surprising then is the
result that the symplectic leaves of $\Pi_\Sigma$ on $X(\Sigma)$
turn out to be the $T_\C$-orbits in $X(\Sigma)$.  By construction,
the action of $T_\C$ on $(X(\Sigma),\Pi_\Sigma)$ is Poisson, but no
subgroup of $T_\C$ acts in a globally Hamiltonian way.  However,
each symplectic leaf admits a Hamiltonian action by a sub-torus of
the compact torus $T$. This is the content of section
\ref{ToricPoissonStructures}.

Considering the results of Nakanishi in the planar case, a natural
question to ask is whether the image of $\t+i\t$ in
$H^1(X(\Sigma),\Pi_\Sigma)$ generates it. In section
\ref{TheModularClass}, it is shown that this is not the case.  The
modular class of $(X(\Sigma),\Pi_\Sigma)$ is always non-trivial and
and independent of the image of $\t+i\t$. While this does not pin
down $H^1(X(\Sigma),\Pi_\Sigma)$ in general, it does exhaust the
classes for the simplest toric variety, $\CP^1$. In section
\ref{Poisson_Cohomology_CP1}, the Poisson cohomology of
$(\CP^1,\Pi_\Sigma)$ is computed using a Mayer-Vietoris argument and
the theorems of Nakanishi.

Of course, much has been written about the connection between smooth
projective $T_\C$-varities and smooth compact connected symplectic
manifolds of dimension $2\dim  T$ admitting a Hamiltonian action by
$T$. Through the fundamental work of Delzant \cite{Delzant} such
symplectic manifolds are characterized by their momentum polytope
$\Delta\subset\t^*$ in that each is isomorphic to $X(\Sigma)$ (where
$\Sigma$ is the dual fan of $\Delta$) equipped with K\"ahler metric
with respect to which the action of $T$ is Hamiltonian with momentum
map image $\Delta$.  The family of Delzant polytopes having a given
dual fan thus produces a family of K\"ahler metrics on $X(\Sigma)$.
It has thus far proved difficult to assess the compatibility of
$\Pi_\Sigma$ with a Poisson structure $\pi_\Delta$ determined by
Delzant's construction. However, the final section of this paper
contains a curious formula for the modular vector field of
$\Pi_\Sigma$ on $\CP^n$ with respect to a particular Delzant
Liouville volume $\frac{1}{n!}\omega_\Delta^n$ in terms of Delzant
momentum map data.  A consequence of this formula is a
characterization of the zero set of the modular vector field in
terms of the centroids of the faces of the momentum simplex.

Before continuing to the next section, some notation and conventions
to be used in the paper are fixed.  Throughout the paper $\T$
denotes the group of complex numbers of modulus one and $h$ will
denote a fixed, but arbitrary, positive real parameter. The Lie
algebra of $\T$ will be identified with $\R$ by the map $\R\to \T:
s\mapsto \exp(\frac{i}{\hbar}s)$ where $\hbar$ is $h$ divided by the
circumference of the unit circle.\footnote{In dealing with analysis
on the unit circle, factors of its circumference show up everywhere
in computations and different authors have used different
conventions in attempts to deposit these factors in one place or
another. The reason for inserting the parameter $h$ and the symbol
$\hbar$ here is simply to keep track of those factors while freeing
up the symbol $\pi$ to represent other things.} The kernel of this
map is the lattice $h\Z\subset \R$. By taking products, this induces
an identification of the Lie algebra of $\T^d$ with $\R^d$ and the
kernel of the product map $\R^d\to \T^d$ is the lattice
$(h\Z)^d\subset \R^d$. For a convex set $F$ in a real vector space,
$F^\circ$ will denote the relative interior, i.e., its interior as a
topological subspace of its affine hull.

If $\omega$ is a 2-form on a smooth manifold $M$, we denote by
$\omega^\#\colon TM\to T^*M$ the bundle map defined by contraction
in the \emph{second argument}, $v\mapsto
\omega^\#(v):=\omega(\cdot,v)$. If $\pi$ is a bi-vector field on $M$
then $\pi^\#\colon T^*M\to T^{**}M$ is defined by contraction in the
first argument, $\nu\mapsto \pi^\#(\nu):=\pi(\nu,\cdot)$.  If $M$ is
finite dimensional, as are all the spaces considered in this
article, then there is a canonical bundle isomorphism $T^{**}M\simeq
TM$.  In that case, if $\omega$ is a symplectic form then
$\omega^\#$ is an isomorphism at each point and the image of
$\omega$ under the map $\bigwedge^2 T^*M\to \bigwedge^2T^{**}M$
induced by $(\omega^\#)^{-1}$ can be canonically identified with a
non-degenerate Poisson structure $\pi$ on $M$. With these
conventions, $\pi^\#\circ \omega^\#\colon TM\to TM$ is the identity
and the Hamiltonian function for the standard Poisson structure
generating rigid counter-clockwise rotation of the plane is
positive, i.e., if $\pi=\del_x\wedge \del_y$, then
$\pi^\#(d(\frac{1}{2}(x^2+y^2)))=-y\del_x+x\del_y$.

We will regard $\T_\C$ as acting on $\C$ by right multiplication,
$\C\times \T_\C\to \C: (z,\zeta)\mapsto z\zeta$. As a real vector
space, $\C$ will be identified with $\R^2$ via the map $z\mapsto
\Re(z)e_1+\Im(z)e_2$.  The conventions adopted here for the relation
between tensors in this complex coordinate and in the real
coordinates $x=\Re(z)$ and $y=\Im(z)$ are the following. Set
$dz=dx+idy$ and $d\zbar = dx-idy$, and $ \del_{z}
=\textstyle{\frac{1}{2}}(\del_{x}-i\del_{y})$ and $\del_{\zbar} =
\textstyle{\frac{1}{2}}(\del_{x}+i\del_{y})$ so that
$\del_{x}=\del_{z}+\del_{\zbar}=2\Re(\del_{z})$ and
$\del_{y}=(i\del_{z}-i\del_{\zbar})=2\Re(i\del_{z})$ and
$\del_z,\del_{\zbar}$ is dual as a complex basis to $dz,d\zbar$. In
short, the holomorphic cotangent bundle has been identified with the
real cotangent bundle using the real projection, whereas the
holomorphic tangent bundle has been identified with the real tangent
bundle using twice the real projection. Under the identifications,
the field $z\del_z+c.c.$ is radial vector field $x\del_x+y\del_y$
and $iz\del_z+c.c.$ is the vector field generating rigid
counter-clockwise rotation about the origin.

The organization of the paper is as follows.  Section
\ref{Construction_of_Varieties} reviews the construction of smooth
compact $T_\C$-manifolds.  The Delzant construction is reviewed in
section \ref{Delzant_Construction} for later use in section
\ref{ModVectorFieldonCPn} where a formula involving Delzant moment
data is derived for the modular vector field of $\Pi_\Sigma$ on
$\CP^n$ relative to the Delzant Liouville volume form.  The Poisson
structure $\Pi_\Sigma$ is constructed in section
\ref{ToricPoissonStructures} and its local geometry and the
symplectic geometry of its leaves are analyzed.  The main result on
the first Poisson cohomology of $(X(\Sigma),\Pi_\Sigma)$ is proved
in section \ref{TheModularClass}, and in section
\ref{Poisson_Cohomology_CP1} the Poisson cohomology of
$(\CP^1,\Pi_\Sigma)$ is computed by a Mayer-Vietoris argument.

The author would like to thank Sam Evens, Mark Colarusso, Misha
Gekhtman, and Stephan Stolz for useful discussions.


\section{Compact smooth algebraic
$T_\C$-manifolds.\label{Construction_of_Varieties}}

Let $T$ be a compact abelian Lie group of dimension $n$.  Write
$\Lambda^*$ for the dual group of $T$ viewed as a lattice in $\t^*$,
the dual of the Lie algebra $\t$ of $T$, and let $\Lambda$ denote
the dual lattice in $\t$.  Let $\t_\C$ and $T_\C$ denote the
complexifications of $\t$ of $T$, respectively.

Let $\Delta$ be a convex polytope in $\t^*$.  An edge emanating from
a vertex $\nu\in \Delta$ is said to be generated by $\eta\in \t^*$
if and only if the edge is contained in the ray $\{\nu+t\eta:
t>0\}$.  A convex polytope $\Delta$ in $\t^*$ is \emph{Delzant} if
and only if for each vertex of $\nu$ of $\Delta$, the edges
emanating from $\nu$ are generated by a basis of $\Lambda^*$.  Given
a Delzant polytope $\Delta$ in $\t^*$, let $\Sigma$ denote the fan
in $\Lambda$ over its dual polytope (cf. \cite{Fulton} \S 1.5).  To
$\Sigma$ one can associate a smooth complex projective variety
$X(\Sigma)$ on which $T_\C$ acts with an open orbit.  As a complex
manifold, $X(\Sigma)$ may be constructed as the quotient an open
subset of $\C^d$ by the free action of a complex torus.

The combinatorial properties of the polytope $\Delta$, and hence of
the dual fan $\Sigma$, are the essential data used in the
construction.  So, there are two dual points of view one can take,
using either then polytope or the fan to index the relevant
quantities.  From the point of view of algebraic geometry, it is
natural to use fan as the constructions can be generalized to
singular toric varieties whose fans are not the dual fan of a
Delzant polytope.  From the point of view of symplectic geometry,
however, the polytope is natural choice.  The subject of Poisson
geometry overlaps both of these worlds, so this article will adopt
the parallel use of both the polytope and its dual fan.

Let $\Delta$ be a Delzant polytope in $\t^*$ and enumerate the
facets $1,2,\ldots, d$ of $\Delta$. Correspondingly, this enumerates
the elements of the 1-skeleton of $\Sigma$, denoted $\Sigma^{(1)}$.
This enumeration sets up an inclusion reversing correspondence
between the set of faces of $\Delta$ and a subset of the power set
of $\{1,2,\ldots,d\}$, partially ordered by inclusion. Namely, each
face $F$ of codimension $f$ corresponds to the subset
$\{s_1,\ldots,s_f\}$ of labels of the facets whose intersection is
$F$. For the $\ell^{th}$ ray in $\Sigma^{(1)}$, let $u_\ell$ be the
unique primitive element of $\Lambda$ which generates it.  Then the
cone in $\Lambda$ generated by $\{u_{s_1},u_{s_2},\ldots,u_{s_f}\}$,
denoted $\cone\{u_{s_1},\ldots,u_{s_f}\}$, is the strongly convex
rational polyhedral cone in $\Sigma$ determined by $F$.  As $\Delta$
necessarily has dimension $n$, each vertex $V$ corresponds to a
subset $\{s_1,\ldots,s_n\}$ of cardinality $n=\dim \t$.  Order
$u_{s_1},\ldots,u_{s_n}$ by requiring that $s_1<s_2<\ldots <s_n$,
then $u_{s_1},\ldots,u_{s_n}$ is a basis of $\t$ and the dual basis
of $\t^*$ generates the edges of $\Delta$ emanating from the vertex
$V$.

\begin{notation}\label{notation_structure_map_of_polytope}
Define a $\Z$-linear map $p\colon (h\Z)^d\to \Lambda$ by the
assignments $he_\ell\mapsto u_\ell$ for each $\ell =1,2,\ldots,d$
and also write $p\colon \R^d\to \t$ and $p\colon \C^d\to \t_\C$ for
the extensions to real and complex scalars, respectively.  Up to the
enumeration of the facets of $\Delta$ (or equivalently the rays in
$\Sigma^{(1)}$) the map $p$ is uniquely associated with $\Delta$
(and thus $\Sigma$).  Let $\n\subset \R^d$ denote the kernel of
$p\colon \R^d\to \t$ so that $\n\cap (h\Z)^d$ is the kernel of
$p\colon (h\Z)^d\to \Lambda$ and $\n_\C=\ker(p\colon \C^d\to \t_\C)$
is the complexification of $\n$.  Let $\iota\colon \n\cap (h\Z)^d\to
(h\Z)^d$ denote the inclusion and, as before, also write
$\iota\colon \n\to \R^d$ and $\iota\colon \n_\C\to \C^d$ for the
extensions to real and complex scalars, respectively.
\end{notation}

Let $N_\C\subset \T_\C^d$ denote the complex algebraic subgroup of
$\T_\C^d$ corresponding to $\n_\C$. Essentially, $X(\Sigma)$ is the
quotient of $\C^d$ by the action of $N_\C$.  More precisely, it is
the quotient by $N_\C$ of a dense open subset $\mathcal
U_\Sigma\subset \C^d$ on which $N_\C$ acts freely. As $\mathcal
U_\Sigma$ is defined as a union of $\T_\C^d$ orbits on $\C^d$,
notation for describing these must be introduced.

\begin{notation}\label{notation_torus_orbits}
Given $S\subset \{1,2,\ldots, d\}$, let
\[(\C_0)^S=\{z=(z_1,\ldots,z_d)\in \C^d\colon z_\ell
=0\Leftrightarrow \ell\in S^c\}\] where $S^c$ denotes the complement
of $S$.
\end{notation}
For $d=1$, the orbits of $\T_\C$ are the origin $(\C_0)^\emptyset$
and its complement $(\C_0)^{\{1\}}$. By taking products it is clear
that the assignment $S\mapsto (\C_0)^S$ gives a bijection between
the orbits of $\T_\C^d$ on $\C^d$ and the powerset of $\{1,2,\ldots,
d\}$.  It is important to note that the closure of $(\C_0)^S$ is the
union of the orbits $(\C_0)^{S'}$ indexed by $S'\subset S$, and thus
equals the linear subspace $\C^S\subset \C^d$ spanned by
$\{e_s\colon s\in S\}$.  The isotropy subgroup of $(\C_0)^S$ is
$\T_\C^{S^c}:=\exp(\C^{S^c})$.

\begin{notation}\label{DefofU_Sigma2}
Let
\begin{equation}\label{toric_locus2}
\mathcal U_\Sigma := \bigcup_S \,(\C_0)^S \subset \C^d
\end{equation}
where the union is taken over $S\subset \{1,2,\ldots,d\}$ such that
$S^c$ labels a face of $\Delta$ (equivalently, the subsets $S$ such
that $\cone \{u_s\colon s\in S^c\}\in \Sigma$).
\end{notation}

\begin{prop}\label{NactionFree2}
$\mathcal U_\Sigma$ is an open subset of $\C^d$ on which $N_\C$ acts
freely.
\end{prop}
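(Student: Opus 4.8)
The plan is to dispatch openness and freeness by separate arguments, the first combinatorial-topological and the second resting on the Delzant hypothesis. For openness I would exhibit $\mathcal U_\Sigma$ as a finite union of manifestly open sets indexed by the vertices of $\Delta$. To a vertex $V$ with facet-label $\{s_1,\dots,s_n\}$ associate $U_V:=\{z\in\C^d: z_\ell\neq 0\ \text{for every}\ \ell\notin\{s_1,\dots,s_n\}\}$, which is open since it is cut out by the nonvanishing of finitely many coordinates, and which is a union of $\T_\C^d$-orbits. A given orbit $(\C_0)^S$ lies in $U_V$ exactly when $S^c\subseteq\{s_1,\dots,s_n\}$, that is, when $\cone\{u_s:s\in S^c\}$ is a face of the maximal cone at $V$. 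Since $\Sigma$ is the (complete) dual fan of $\Delta$, every cone of $\Sigma$ is a face of a maximal one, so $S^c$ labels a face of $\Delta$ if and only if $(\C_0)^S\subseteq U_V$ for some vertex $V$. Comparing with \eqref{toric_locus2} gives $\mathcal U_\Sigma=\bigcup_V U_V$, hence $\mathcal U_\Sigma$ is open.

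For freeness, since $\mathcal U_\Sigma$ is a union of $\T_\C^d$-orbits and $N_\C\subset\T_\C^d$, it suffices to show that $N_\C$ meets the $\T_\C^d$-stabilizer of each such orbit trivially. By the isotropy computation recorded after Notation~\ref{notation_torus_orbits}, the stabilizer of $(\C_0)^S$ is $\T_\C^{S^c}=\exp(\C^{S^c})$, so I must prove $N_\C\cap\T_\C^{S^c}=\{1\}$ whenever $S^c$ labels a face, i.e. whenever $\{u_s:s\in S^c\}$ generates a cone of $\Sigma$. Take $\zeta$ in this intersection. Writing $\zeta=\exp(w)$ with $w\in\C^{S^c}$ and (using that $N_\C$ is the connected subgroup with Lie algebra $\n_\C=\ker p$) $\zeta=\exp(v)$ with $v\in\n_\C$, the difference $w-v$ lies in $\ker(\exp\colon\C^d\to\T_\C^d)=(h\Z)^d$, whence $p(w)=p(w-v)\in p((h\Z)^d)=\Lambda$.

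Here the Delzant hypothesis enters decisively: the primitive generators $\{u_s:s\in S^c\}$ of a cone of $\Sigma$ extend to a $\Z$-basis of $\Lambda$, so in particular they are $\C$-linearly independent. Writing $w=\sum_{s\in S^c}w_se_s$ gives $p(w)=\tfrac1h\sum_{s\in S^c}w_su_s$; linear independence first forces $\Im(w_s)=0$ for all $s$ (so that $p(w)$ be real), and then $\tfrac1h\sum_{s\in S^c} w_su_s\in\Lambda$ forces each $w_s/h\in\Z$, i.e. $w\in(h\Z)^{S^c}$ and $\zeta=1$. As $\mathcal U_\Sigma$ is exhausted by the orbits $(\C_0)^S$ with $S^c$ labeling a face, this yields the free action.

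I expect the freeness, not the openness, to be the crux. Mere $\R$-linear independence of the $u_s$ only kills the identity component of the stabilizer (it gives $\n_\C\cap\C^{S^c}=0$); excluding a nontrivial \emph{finite} stabilizer is precisely what requires the integrality built into the Delzant condition, namely that the generators form part of an integral basis of $\Lambda$ rather than being merely independent over $\R$. This is where the smoothness of $X(\Sigma)$ is used, and it is the step I would treat most carefully.
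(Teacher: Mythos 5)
Your proof is correct, and it splits from the paper in an interesting way. For freeness you take essentially the paper's route: both arguments reduce to showing the orbit-wise isotropy $N_\C\cap\T_\C^{S^c}$ is trivial whenever $S^c$ labels a face. The paper simply asserts this from the fact that $\{u_s\colon s\in S^c\}$ is part of an integral basis of $\Lambda$, whereas you supply the lattice computation it leaves implicit (write $\zeta=\exp(w)=\exp(v)$, use $\ker(\exp\colon\C^d\to\T_\C^d)=(h\Z)^d$, push forward by $p$, and conclude $w\in(h\Z)^{S^c}$); your closing remark that $\R$-linear independence alone only kills the identity component of the stabilizer, while integrality is what excludes finite isotropy, is exactly right and is the point at which smoothness of $X(\Sigma)$ enters --- the paper does not comment on this. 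For openness your argument is genuinely different. The paper shows the complement $\C^d\setminus\mathcal U_\Sigma$ is closed: if $(S')^c$ fails to label a face then so does $S^c$ for every $S\subseteq S'$, so the closure $\C^{S'}$ of each bad orbit consists of bad orbits, and the complement is a finite union of these closures. You instead exhibit $\mathcal U_\Sigma=\bigcup_V U_V$ as a finite union of the manifestly open sets $U_V=\{z\colon z_\ell\neq 0,\ \ell\notin S_V\}$, which requires knowing that every cone of $\Sigma$ is a face of a maximal cone (every face of $\Delta$ contains a vertex). Your decomposition is precisely the cover $\{\mathcal U_V\}$ that the paper introduces only in the following two propositions, where it is used to build the vertex charts and trivialize the $N_\C$-bundle; so your openness argument proves part of that later material along the way, at the cost of invoking completeness of the dual fan, while the paper's closure argument gets openness from nothing more than the poset property of face labels.
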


\begin{proof}
The union in (\ref{toric_locus2}) is disjoint since $(\C_0)^{S_1}$
and $(\C_0)^{S_2}$ are different $\T_\C^d$-orbits if $S_1\not= S_2$.
In fact, as $\C^d$ is a disjoint union of $\T_\C^d$-orbits,
\begin{equation}\label{complement_of_closed2}
\mathcal U_\Sigma = \C^d\setminus \bigcup_{S'} (\C_0)^{S'}
\end{equation}
where the union is over the subsets $S'$ such that $(S')^c$
\emph{does not} label a face of $\Delta$.  Note that if $S\subset
S'$ and $(S')^c$ does not label a face of $\Delta$ then neither does
$S$ because $S^c\supset (S')^c$.  Hence, if $(S')^c$ does not label
a face of $\Delta$, then the closure of $(\C_0)^{S'}$, i.e.,
\[
\C^{S'}:=\bigcup_{S\subset S'} \C_0^S,
\]
is contained in the union $\bigcup_S (\C_0)^S$ over sets $S$ for
which $S^c$ does not label a face of $\Delta$.  It follows that the
set being deleted from $\C^d$ in (\ref{complement_of_closed2}) is a
closed set, and hence $\mathcal U_\Sigma$ is open.

To prove that the $N_\C$-action is free, one needs to show that the
isotropy subgroup of $N_\C$ at each point of $\mathcal U_\Sigma$ is
trivial. Consider one $\T_\C^d$-orbit $(\C_0)^S$ in $U_\Sigma$.  The
isotropy subgroup of each $z\in (\C_0)^S$ is $N_\C\cap \T_\C^{S^c}$.
If $S^c$ labels a face of $\Delta$ then the set $\{u_s\colon s\in
S^c\}$ is a subset of an integral basis of $\Lambda$.  Thus
$N_\C\cap \T_\C^{S^c}=1$.
\end{proof}

\begin{notation}\label{notation_toric_variety}
The quotient space $\mathcal U_\Sigma/N_\C$ is a smooth complex
manifold with a right action of the complex torus $T_\C\simeq
N_\C\backslash \T_\C^d$.  Denote this quotient by $X(\Sigma)$.
\end{notation}

The following proposition is obvious.

\begin{prop}
The $T_\C$-orbits in $X(\Sigma)$ of complex codimension $\ell$ are
in bijection with the faces of the polytope $\Delta$ of real
codimension $\ell$ (equivalently, the cones in $\Sigma$ of dimension
$\ell$) for each $\ell=0,1,...,n$. In particular, the interior of
$\Delta$ (the zero cone in $\Sigma$) corresponds to an open dense
orbit of $T_\C$ and the vertices of $\Delta$ (the $n$-dimensional
cones in $\Sigma$) correspond to the fixed points of $T_\C$.
\end{prop}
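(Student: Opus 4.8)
The plan is to establish the claimed bijection by tracking how the $T_\C$-orbits on $X(\Sigma)=\mathcal U_\Sigma/N_\C$ descend from the $\T_\C^d$-orbits $(\C_0)^S$ on $\mathcal U_\Sigma$, using the orbit-stabilizer dictionary already set up in Notation~\ref{notation_torus_orbits} and the defining condition in Notation~\ref{DefofU_Sigma2}. Since $T_\C\simeq N_\C\backslash\T_\C^d$ acts on the quotient, the $T_\C$-orbits in $X(\Sigma)$ are exactly the images of the $\T_\C^d$-orbits in $\mathcal U_\Sigma$ under the quotient map $\mathcal U_\Sigma\to X(\Sigma)$. First I would note that each $(\C_0)^S\subset\mathcal U_\Sigma$ is a single $\T_\C^d$-orbit, that $N_\C$ permutes these orbits trivially (it preserves each one, since $N_\C\subset\T_\C^d$ is abelian and each $(\C_0)^S$ is $\T_\C^d$-invariant), and hence the image of $(\C_0)^S$ is a single $T_\C$-orbit; distinct $S$ give distinct images because the $(\C_0)^S$ are the fibers of the orbit-type stratification preserved by $N_\C$. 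This gives a bijection between the $T_\C$-orbits in $X(\Sigma)$ and the sets $S$ with $S^c$ labeling a face of $\Delta$, which by the inclusion-reversing correspondence established before Notation~\ref{notation_structure_map_of_polytope} are in bijection with the faces of $\Delta$.

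The next step is to match the codimensions. The key computation is to identify the complex dimension of the $T_\C$-orbit that is the image of $(\C_0)^S$. The orbit $(\C_0)^S$ has complex dimension $|S|$ (it is a product of $|S|$ copies of $\C^*$ and $|S^c|$ zeros), and its isotropy in $\T_\C^d$ is $\T_\C^{S^c}$, as recorded in Notation~\ref{notation_torus_orbits}. Passing to the quotient by $N_\C$, which acts freely on $\mathcal U_\Sigma$ by Proposition~\ref{NactionFree2}, the image orbit has complex dimension $|S| - \dim_\C N_\C = |S| - (d-n)$. Now if $S^c$ labels a face $F$ of codimension $\ell$, then by the correspondence $|S^c|=\ell$, so $|S|=d-\ell$, and the orbit dimension is $(d-\ell)-(d-n)=n-\ell$. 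Thus the complex codimension of the image orbit inside the $n$-dimensional $X(\Sigma)$ is exactly $\ell$, matching the real codimension of $F$ in $\t^*$ (equivalently, the dimension of the corresponding cone $\cone\{u_s:s\in S^c\}\in\Sigma$, which is $\ell$).

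Finally I would read off the two special cases. The interior of $\Delta$ corresponds to $S^c=\emptyset$, i.e.\ $S=\{1,\dots,d\}$, giving $(\C_0)^{\{1,\dots,d\}}=(\C^*)^d$, whose image is an orbit of complex dimension $n$ — the open dense orbit, open because it is the image of an open dense $\T_\C^d$-orbit. The vertices of $\Delta$ have $S^c$ of cardinality $n$ (each vertex corresponds to a subset of size $n$, as noted before Notation~\ref{notation_structure_map_of_polytope}), giving orbits of complex dimension $n-n=0$, i.e.\ the $T_\C$-fixed points. The main obstacle, and the one point requiring genuine care rather than bookkeeping, is verifying that the quotient map sends distinct $\T_\C^d$-orbits in $\mathcal U_\Sigma$ to distinct $T_\C$-orbits and that it is equivariant in the sense making the dimension count $|S|-\dim_\C N_\C$ legitimate; this rests on $N_\C$ acting freely (Proposition~\ref{NactionFree2}) and on each $(\C_0)^S$ being a union of $N_\C$-orbits, which is why the proposition can be called ``obvious'' once the construction of the previous section is in hand.
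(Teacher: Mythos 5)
Your proof is correct and is exactly the argument the paper intends: the paper offers no proof at all (it declares the proposition ``obvious'' from the construction in Section \ref{Construction_of_Varieties}), and what it deems obvious is precisely your chain of observations --- the $T_\C$-orbits in $X(\Sigma)$ are the images of the $N_\C$-saturated orbits $(\C_0)^S$, and the free $N_\C$-action (Proposition \ref{NactionFree2}) gives the dimension count $|S|-(d-n)=n-\ell$ when $|S^c|=\ell$ labels a codimension-$\ell$ face. Your writeup fills in this bookkeeping accurately, so there is nothing to correct.
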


The remainder of this section is devoted to the construction of a
distinguished atlas of local holomorphic coordinates for
$X(\Sigma)$. Given $z=(z_1,\ldots,z_d)\in \mathcal U_\Sigma\subset
\C^d$, write $[z_1\colon z_2\colon \ldots \colon z_d]$ for the class
$zN_\C$ in $X(\Sigma)$.

\begin{prop}
Given a vertex $V$ of $\Delta$ (or an $n$-dimensional cone of
$\Sigma$), let $\mathcal U_V=\cup_S (\C_0)^S$ where the union is
taken over all subsets $S\subset \{1,2,\ldots, d\}$ such that
$S^c\subset S_V$, i.e., $S^c$ labels a face containing $V$ ($S^c$
labels a cone in $\Sigma$ contained in the cone corresponding to
$V$).  Then $\{\mathcal U_V/N_\C\colon V\text{ is a vertex of
}\Delta\}$ is an open cover of $X(\Sigma)$.
\end{prop}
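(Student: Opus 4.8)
The plan is to reduce the covering statement to a single elementary fact about the face lattice of $\Delta$. Since $X(\Sigma)=\mathcal U_\Sigma/N_\C$ and the quotient map $q\colon \mathcal U_\Sigma\to X(\Sigma)$ is open (being the quotient by a group action), it suffices to prove two things: that each $\mathcal U_V$ is an $N_\C$-saturated open subset of $\mathcal U_\Sigma$, and that $\mathcal U_\Sigma=\bigcup_V \mathcal U_V$ with the union taken over all vertices $V$. Granting these, $\mathcal U_V/N_\C=q(\mathcal U_V)$ is open and $\bigcup_V q(\mathcal U_V)=q(\bigcup_V\mathcal U_V)=q(\mathcal U_\Sigma)=X(\Sigma)$, which is the assertion.

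First I would record the more transparent description
$\mathcal U_V=\{z\in\C^d\colon z_\ell\neq 0\text{ for all }\ell\in S_V^c\}$,
obtained by unwinding the condition $S^c\subseteq S_V$ against Notation \ref{notation_torus_orbits}: one has $S^c\subseteq S_V\Leftrightarrow S_V^c\subseteq S$, i.e. every coordinate indexed outside $S_V$ is nonzero. From this formula two facts are immediate. First, $\mathcal U_V=\bigcap_{\ell\in S_V^c}\{z_\ell\neq 0\}$ is open in $\C^d$. Second, $\mathcal U_V$ is a union of the $\T_\C^d$-orbits $(\C_0)^S$ and hence a union of $N_\C$-orbits, since $N_\C\subseteq\T_\C^d$; thus it is saturated. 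I would also check the inclusion $\mathcal U_V\subseteq\mathcal U_\Sigma$: when $S^c\subseteq S_V$, the set $\{u_s\colon s\in S^c\}$ is a subset of the basis $\{u_s\colon s\in S_V\}$, so $\cone\{u_s\colon s\in S^c\}$ is a face of the smooth cone attached to $V$ and therefore lies in $\Sigma$, meaning $S^c$ labels a face of $\Delta$ as required in Notation \ref{DefofU_Sigma2}.

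The heart of the matter is the equality $\mathcal U_\Sigma=\bigcup_V\mathcal U_V$. The inclusion $\supseteq$ is just the containment verified above. For $\subseteq$, take $z\in(\C_0)^S$ with $S^c$ labeling a face $F$ of $\Delta$; I must produce a vertex $V$ with $S^c\subseteq S_V$, equivalently $z\in\mathcal U_V$. This is precisely the statement that $F$ contains a vertex of $\Delta$: a nonempty face of a polytope is itself a polytope and so has at least one vertex, which is then a vertex of $\Delta$. Choosing such a vertex $V\subseteq F$ and invoking the inclusion-reversing labeling correspondence (a smaller face is contained in at least as many facets), $V\subseteq F$ forces $S_V\supseteq S^c$, so $z\in\mathcal U_V$.

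The main obstacle is a mild one: keeping the two dual bookkeeping systems aligned. The polytope/fan duality sends the codimension-$f$ face $F$ labeled by $S^c$ to the $f$-dimensional cone $\cone\{u_s\colon s\in S^c\}$, and the inclusion-reversal must be applied consistently so that ``the vertex $V$ lies on $F$'' translates correctly into ``$S^c\subseteq S_V$.'' Once this combinatorial dictionary is fixed, the only substantive geometric input is the elementary fact that every nonempty face of a convex polytope contains a vertex; the openness and $N_\C$-saturation of each $\mathcal U_V$, and the fact that the quotients then cover, are formal consequences of the quotient construction.
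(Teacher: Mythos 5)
Your proof is correct, and it follows the same overall decomposition as the paper's (each $\mathcal U_V$ is open and contained in $\mathcal U_\Sigma$; the $\mathcal U_V$ cover $\mathcal U_\Sigma$), but both key steps are handled by different means. For openness, the paper argues on the complement: $\mathcal U_\Sigma\setminus \mathcal U_V$ is the union of the orbits $(\C_0)^S$ for which $S^c$ labels a face with $S^c\not\subset S_V$, and this union contains the closure of each of its orbits, hence is closed in $\mathcal U_\Sigma$ --- the same style of argument used to prove Proposition \ref{NactionFree2}. You instead observe that the definition unwinds to $\mathcal U_V=\{z\in \C^d\colon z_\ell\not=0 \text{ for all } \ell\in S_V^c\}$, so that $\mathcal U_V$ is manifestly open in all of $\C^d$; this is valid, and in fact the paper quotes exactly this description of $\mathcal U_V$ in the following proposition on trivializing the bundle, so your route just moves that observation earlier and gets openness for free. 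For the covering, the paper offers only the terse remark that each vertex misses some facet, whereas you supply the actual combinatorial content: every nonempty face $F$ of $\Delta$ contains a vertex $V$, and inclusion-reversal of the labeling correspondence turns $V\subseteq F$ into $S^c\subseteq S_V$, so every orbit of $\mathcal U_\Sigma$ lies in some $\mathcal U_V$. You also make explicit two points the paper leaves implicit: that $\mathcal U_V$ is a union of $\T_\C^d$-orbits, hence $N_\C$-saturated, and that the quotient map is open, which is what licenses the conclusion that $\mathcal U_V/N_\C$ is open and that these sets cover $X(\Sigma)$. The difference is one of emphasis rather than substance: the paper's closure argument keeps the orbit stratification in the foreground, while your coordinate description is more elementary and direct; both ultimately rest on the same fact that $\Delta$ is simple, equivalently that every subset of $\{u_s\colon s\in S_V\}$ spans a cone of $\Sigma$.
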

\begin{proof}
Let $V$ be a vertex of $\Delta$.  The faces of $\Delta$ containing
$V$ are in bijection with the subsets $S^c\subset S_V$.  Hence
$\mathcal U_V\subset \mathcal U_\Sigma$.  The complement is the
union
\begin{equation}\label{VertexChart_complement}
\mathcal U_\Sigma\setminus \mathcal U_V=\bigcup_S (\C_0)^S
\end{equation}
where the union is taken over the subsets $S$ such that $S^c$ labels
a face of $\Delta$, but $S^c\not\subset S_V$.  If $S^c\not\subset
S_V$, then every set containing $S^c$ does not contain $S_V$.  Thus,
the closure of $(\C_0)^S$ belongs to the union in
(\ref{VertexChart_complement}).  Being equal to a finite union of
closed sets, $\mathcal U_\Sigma\setminus \mathcal U_V$ is therefore
closed.  Hence $\mathcal U_V$ is open in $\mathcal U_\Sigma$ and
$\mathcal U_V/N_\C$ is open in $X(\Sigma)$.

For each vertex, there is at least one facet which does not meet it.
Hence, the sets $\mathcal U_V$ such that $V$ is a vertex of $\Delta$
cover $\mathcal U_\Sigma$. The result follows.
\end{proof}

\begin{prop}
For each vertex $V$ of $\Delta$ (or $n$-dimensional cone of
$\Sigma$), the principal $N_\C$-bundle $\mathcal U_\Sigma\to
X(\Sigma)$ is trivial over the open set $\mathcal U_V/N_\C$.
\end{prop}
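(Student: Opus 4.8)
The plan is to produce an explicit trivializing section of the principal $N_\C$-bundle $\mathcal U_\Sigma\to X(\Sigma)$ over $\mathcal U_V/N_\C$, using the vertex $V$ to pick out a distinguished slice. Recall that $V$ corresponds to a subset $\{s_1,\ldots,s_n\}$ of labels with $u_{s_1},\ldots,u_{s_n}$ a basis of $\Lambda$; write $S_V=\{s_1,\ldots,s_n\}^c$ for the complementary set of labels, so that points of $\mathcal U_V$ are those $z$ whose zero-set is confined to the coordinates indexed by the $s_i$. Since the $u_{s_i}$ form a basis of $\Lambda$, the map $p$ restricts to an isomorphism on the coordinate subspace they span, and this is precisely the algebraic input that should let me split off $N_\C$.

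First I would make the splitting precise at the level of Lie algebras and tori. Because $u_{s_1},\ldots,u_{s_n}$ is a $\Z$-basis of $\Lambda$, the composite $\C^{\{s_1,\ldots,s_n\}}\hookrightarrow \C^d\xrightarrow{p}\t_\C$ is an isomorphism; hence $\C^d=\C^{\{s_1,\ldots,s_n\}}\oplus \n_\C$ as a direct sum, giving a corresponding splitting $\T_\C^d\simeq \T_\C^{\{s_1,\ldots,s_n\}}\times N_\C$ of algebraic tori. Next I would use this decomposition to define the section. Over the open orbit-type set $\mathcal U_V$, each coordinate $z_\ell$ with $\ell\in S_V=\{s_1,\ldots,s_n\}$ is nonvanishing (since $S_V^c$ must label a face containing $V$, the vanishing of $z_\ell$ is only permitted for $\ell$ among the $s_i$)—so in fact I must be careful about which coordinates are the allowed-to-vanish ones. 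The correct reading is that on $\mathcal U_V$ the coordinates that may vanish are exactly those indexed by the basis labels $s_1,\ldots,s_n$, and the remaining $d-n$ coordinates (indexed by $S_V$) are everywhere nonzero. Using the $N_\C$-action, which acts through the $\n_\C$-directions, I can normalize those $d-n$ nonvanishing coordinates, producing a canonical representative in each $N_\C$-orbit.

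The cleanest way to carry this out is to exhibit a global holomorphic map $\sigma\colon \mathcal U_V/N_\C\to \mathcal U_V$ splitting the projection, thereby trivializing the bundle via $(x,g)\mapsto \sigma(x)\cdot g$. Concretely I would argue that the $N_\C$-action, transverse to $\C^{\{s_1,\ldots,s_n\}}$ by the Lie-algebra splitting, can be used to move any $z\in\mathcal U_V$ to a unique representative whose $S_V$-coordinates equal $1$; holomorphic dependence follows because solving for the required element of $N_\C$ amounts to inverting the isomorphism $N_\C\to\T_\C^{S_V}$ obtained from the complementary projection, which is algebraic. Freeness of the $N_\C$-action on $\mathcal U_\Sigma$ (Proposition \ref{NactionFree2}) guarantees uniqueness of the representative, so $\sigma$ is well defined, and the trivialization $\mathcal U_V/N_\C\times N_\C\xrightarrow{\sim}\mathcal U_V$ is then immediate from the free transitive action of $N_\C$ on fibers.

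The main obstacle I anticipate is bookkeeping rather than conceptual: one must verify that the $S_V$-coordinates really are nonvanishing on all of $\mathcal U_V$ and that the projection $N_\C\to\T_\C^{S_V}$ is an isomorphism of algebraic groups, both of which hinge on the Delzant condition that $\{u_s:s\in S_V^c\}=\{u_{s_1},\ldots,u_{s_n}\}$ is a lattice basis. Establishing that this projection is an isomorphism—equivalently that $\n_\C$ is a complement to $\C^{\{s_1,\ldots,s_n\}}$ and maps isomorphically onto the $\T_\C^{S_V}$ factor—is the crux, since it is exactly what makes the normalization of the $S_V$-coordinates by an element of $N_\C$ both possible and unique. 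Once that linear-algebraic fact is in hand, the holomorphy and equivariance of the trivialization are routine.
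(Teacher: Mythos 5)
Your proposal is correct and takes essentially the same route as the paper: both trivialize the bundle by normalizing the $d-n$ nowhere-vanishing coordinates on $\mathcal U_V$ to $1$, and your crux fact --- that the Delzant lattice-basis condition makes $N_\C$ project isomorphically onto the torus factor indexed by the complementary labels --- is exactly what the paper establishes by exhibiting the kernel-lattice basis $c_\ell = e_{n+\ell}+\sum_{k} A_{k\ell}e_k$, leading to the same section $[w_1:\cdots:w_n:1:\cdots:1]\mapsto (w_1,\ldots,w_n,1,\ldots,1)$. The only blemish is notational: your $S_V$ is the complement of the paper's $S_V$ (which labels the facets containing $V$), causing the momentary confusion about which coordinates may vanish that you correctly resolved mid-argument.
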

\begin{proof}
Let $V$ be a vertex of $\Delta$.  No generality is lost in assuming
that $S_V=\{1,2,\ldots, n\}$ as the facets of $\Delta$ may be
relabeled if necessary.  Then $\mathcal U_V=\{z\in \C^d\colon
z_{\ell}\not=0, \ell=n+1,n+2,\ldots,d\}$.

As $u_1,u_2,\ldots, u_n$ form a basis for $\Lambda$, there exists a
unique integral $n\times (d-n)$ matrix $A$ such that
$u_{n+\ell}+\sum_{k=1}^n A_{k\ell}u_{k\ell}=0$ for each
$\ell=1,2,\ldots,d-n$.  The vectors $hc_1,\ldots,hc_{d-n}$ where
$c_\ell=e_{n+\ell}+\sum_{k=1}^n A_{k\ell}u_\ell$ then form a basis
for the kernel of $p\colon (h\Z)^d\to \Lambda$.  The induced
isomorphism $\T_\C^{d-n}\simeq N_\C$ then parameterizes the action
of $N_\C$ on $\C^d$ so that
$\zeta=(\zeta_1,\zeta_2,\ldots,\zeta_{d-n})\in \T_\C^{d-n}$ acts on
$z=(z_1,\ldots,z_d)\in \C^d$ by
\[
z.\zeta =
(z_1\zeta^{a_1},\ldots,z_n\zeta^{a_n},z_{n+1}\zeta_1,\ldots,z_d\zeta_{d-n})
\]
where $\zeta^{a_\ell}$ denotes the monomial $\zeta_1^{a_{\ell
1}}\dots \zeta_{d-n}^{a_{\ell (d-n)}}$.  Thus, the quotient map
$\mathcal U_V\to \mathcal U_V/N_\C$ has the form
\[
(z_1,\ldots, z_d)\mapsto
[z_1\tilde{z}^{-a_1}:\ldots:z_n\tilde{z}^{-a_n}:1:\ldots:1].
\]
where $\tilde{z}^{-a_\ell}$ denotes the monomial $z_{n+1}^{-a_{\ell
1}}\dots z_{n+d}^{-a_{\ell (d-n)}}$.  The map $\mathcal U_V/N_\C\to
\mathcal U_\Sigma$ defined by $[w_1:\ldots:w_n:1:\dots:1]\to
(w_1,\ldots, w_n,1,\ldots,1)$ clearly gives a section, trivializing
the $N_\C$-bundle.
\end{proof}

\begin{notation}\label{VertexCoordinatesNotation}
The previous two propositions show that there is a distinguished
coordinate chart $w_V\colon \mathcal U_V/N_\C\to \C^n$ associated to
every vertex of $\Delta$ (or $n$-dimensional cone in $\Sigma$).
Given a vertex $V$ labeled by $S_V=\{s_1,\ldots,s_n\}$,
$s_1<s_2<\ldots<s_d$ let $\sigma$ denote the permutation sending
$k\mapsto s_k$ for $k=1,2,\ldots,n$ and $n+\ell\mapsto s_{n_\ell}$
for $\ell=1,2,\ldots,d-n$ where $s_{n+1}<\ldots<s_d$. Find the
unique set of integers $a_{k\ell}$ such that
$u_{s_{n+\ell}}+\sum_{k=1}^n a_{k\ell}u_{s_k}=0$ for each
$\ell=1,2,\ldots,d-n$. Then define $w_V\colon \mathcal U_V/N_\C\to
\C^n$ by $zN_\C\mapsto (w_1,\ldots,w_n)$ where
$w_\ell=z_{s_\ell}\tilde{z}^{-a_\ell}$ and $\tilde{z}^{-a_\ell}$ is
the monomial $z_{s_{n+1}}^{-a_{\ell 1}}\dots z_{s_d}^{-a_{\ell
(d-n)}}$.  In shorthand notation, $w=z^{[1|-A]\sigma}$.
\end{notation}

 It should be noted that in the coordinates associated
to vertex $V$ labeled by $S_V=\{s_1,\ldots,s_n\}$, the elements
$u_{s_1},\ldots,u_{s_n}\in\t$ act by the vector fields
\[(h/\hbar)iw_1\del_{w_1}+c.c., \ldots, (h/\hbar)iw_n\del_{w_n}+c.c.,\] respectively.
Thus, the open $T_\C$ orbit is characterized by the algebraic
inequality $w_1\dots w_n\not=0$.


\section{The Delzant Construction\label{Delzant_Construction}}

The original symplectic convexity theorem of Atiyah and
Guillemin-Sternberg asserts that the image of the momentum map
$\Phi\colon X\to \t^*$ of a Hamiltonian torus action by $T$ on a
compact connected symplectic manifold $(X,\omega)$ is a convex
polytope $\Delta\subset \t^*$.  In \cite{Delzant}, Delzant
determined the geometry of $\Delta$ under the additional assumptions
that the $T$-action is effective and $\dim X=2\dim T$. What is more,
given $\Delta$ he produced a construction of a compact connected
Hamiltonian $T$-manifold (which is also K\"ahler)
$(X_\Delta,\omega_\Delta,\Phi_\Delta)$ of dimension $2\dim T$ on
which $T$ acted effectively with $\Phi_\Delta(X_\Delta)=\Delta$.
Further, he showed that any other such Hamiltonian $T$-manifold
$(X,\omega,\Phi)$ with $\Phi(X)=\Delta$ was necessarily
$T$-equivariantly symplectomorphic to
$(X_\Delta,\omega_\Delta,\Phi_\Delta)$.  A consequence of a theorem
of Frances Kirwan (cf. Theorem 7.4 in \cite{FKirwan}) is that
$X_\Delta$ is homeomorphic to the compact smooth $T_\C$-manifold
$X(\Sigma)$ constructed from the dual fan $\Sigma$ of $\Delta$.  For
torus actions, every range translate of a momentum map is again a
momentum map.  Thus, Delzant's construction yields a family of
K\"ahler metrics on $X(\Sigma)$ parameterized by the translation
classes of Delzant polytopes in $\t^*$.

In the briefest possible terms, here is the construction.  Let
$\Delta\subset \t^*$ be a Delzant polytope with $d$ facets.  When
$\C^d$ is equipped with the non-degenerate Poisson structure
\begin{equation}
\pi=\sum_{\ell=1}^d i\del_{\zbar_\ell}\wedge \del_{z_\ell},
\end{equation}
the standard action of the real torus $\T^d$ on $\C^d$ is
Hamiltonian and $J\colon \C^d\to (\R^d)^*$ given by
\begin{equation}
J(z)=\sum_{\ell=1}^d
\textstyle{\frac{1}{\hbar}}|z_\ell|^2\epsilon_\ell
\end{equation}
is a momentum map. Let $\omega$ denote the symplectic form induced
by $\pi$.  Fixing an enumeration of the facets of $\Delta$, define
$p$, $\iota$, and $\n$, as in Notation
\ref{notation_structure_map_of_polytope}. Taking duals, the sequence
\begin{equation}
\xymatrix{ 0 & \n^* \ar[l] & (\R^d)^*\ar[l]_{\iota^*} & \t^*
\ar[l]_{\phantom{aaa}p^*} & 0 \ar[l]}
\end{equation}
is exact.  The composition $\mu=\iota^*\circ J\colon (\R^d)^*\to
\n^*$ is a momentum map for the action of $N=\exp(\n)\subset \T^d$
on $(\C^d,\pi)$.

For $\lambda \in (\R^d)^*$, write $\lambda\ge 0$ if $\lambda$ takes
non-negative values on the cone spanned by $\{e_1,\ldots,e_d\}$ in
$\R^d$ and write $P^*=\{\lambda\in (\R^d)^*\colon \lambda\ge 0\}$.
Assume that $0\in \Delta^\circ$, then there exists a unique
$\lambda\in (P^*)^\circ$ such that
\begin{equation}\label{Notation_rep_of_Delta}
\Delta=\{\nu\in \t^*\colon hp^*(\nu)+\lambda\ge 0\}.
\end{equation}
The method of Delzant is to construct $(X_\Delta,\omega_\Delta)$ by
symplectic reduction of the Hamiltonian $N$-manifold
$(\C^d,\omega,\mu)$ at the $\mu$-regular value
$\kappa=\iota^*(\lambda)$.

The image under $J$ of the level set $\mu^{-1}(\kappa)$ is precisely
the image of $\Delta\subset \t^*$ under the affine map $\nu\mapsto
hp^*(\nu)+\lambda$.  As $\mu$ is $\T^d$-invariant, there exists a
smooth $T\simeq N\backslash \T^d$-invariant map $\Phi_\Delta\colon
\mu^{-1}(\kappa)/N\to \t^*$ making the diagram
\begin{equation}
\xymatrix{ \mu^{-1}(\kappa) \ar[d] \ar[r]^J & (\R^d)^* \\
\mu^{-1}(\kappa)/N \ar[r]^{\phantom{aaa}\Phi_{\Delta}} & \t^*
\ar[u]_{hp^*+\lambda}}
\end{equation}
commute.  By symplectic reduction, the quotient
$X_\Delta=\mu^{-1}(\kappa)/N$ obtains a symplectic structure from
$\omega$.  Multiplying the induced Poisson structure by the
parameter $h$, one obtains a non-degenerate Poisson structure
$\pi_\Delta$ on $X_\Delta$ with respect to which the $T$-action is
Hamiltonian with momentum map $\Phi_\Delta$. By construction,
$\Phi_\Delta(X_\Delta)=\Delta$.

The level set $\mu^{-1}(\kappa)$ is the boundary of a convex set of
dimension greater than 1, and thus is connected.  Moreover, the map
$J$ is proper and carries $\mu^{-1}(\kappa)$ onto the convex
polytope $hp^*(\Delta)+\lambda\subset (\R^d)^*$ and is thus compact.
Thus $X_\Delta$ is compact and connected and hence so is $X(\Sigma)$
by Kirwan's theorem.

Of importance for this paper is the fact that $\Phi_\Delta$ provides
a correspondence between the $T_\C$-orbits in $X(\Sigma)$ and the
faces of $\Delta$.  In particular, every $T_\C$-orbit is of the form
$\Phi_\Delta^{-1}(F^\circ)$ for a unique face $F$ of $\Delta$, and
$T_\C$ orbits of dimension $\ell$ correspond to faces of dimension
$\ell$.

\begin{remark}
The assumption that $0\in \Delta^\circ$ can be dropped if one first
translates $\Delta$ to $\Delta'=\Delta-\nu$ for some $\nu\in
\Delta^\circ$, so that $0\in (\Delta')^\circ$, then applies the
construction above to get $\omega_{\Delta'}$ and $\Phi_{\Delta'}$
and sets $\omega_{\Delta}=\omega_{\Delta'}$ and
$\Phi_{\Delta}=\Phi_{\Delta'}+\nu$.
\end{remark}


\section{Toric Poisson Structures\label{ToricPoissonStructures}}

Let $\Pi$ denote bi-vector field
\begin{equation}\label{def_of_Pi}
\Pi=\sum_{\ell=1}^d i|z_\ell|^2\del_{\zbar_\ell}\wedge \del_{z_\ell}
\end{equation}
on $\C^d$.  In the case that $d=1$, $\Pi$ defines a Poisson
structure in the plane.

\begin{theorem}
The bi-vector field $\Pi$ defines a Poisson structure on $\C^d$
which is invariant under the action of the complex torus $\T_\C^d$.
\end{theorem}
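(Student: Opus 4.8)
The plan is to establish the two claims separately. A bivector field defines a Poisson structure precisely when its Schouten--Nijenhuis bracket with itself vanishes, so for the first claim I would show $[\Pi,\Pi]=0$, and for the second I would show that the pushforward of $\Pi$ under each group element recovers $\Pi$.

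For the Poisson condition I would exploit the block structure of $\Pi$. Writing $\Pi=\sum_{\ell=1}^d \Pi_\ell$ with $\Pi_\ell=i|z_\ell|^2\del_{\zbar_\ell}\wedge\del_{z_\ell}$, I expand $[\Pi,\Pi]=\sum_{k,\ell}[\Pi_k,\Pi_\ell]$ by bilinearity of the Schouten bracket. The off-diagonal terms $[\Pi_k,\Pi_\ell]$ with $k\neq\ell$ vanish because $\Pi_k$ and $\Pi_\ell$ are built from the disjoint coordinate pairs $(z_k,\zbar_k)$ and $(z_\ell,\zbar_\ell)$, so each differentiates the coefficients of the other to zero. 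Each diagonal term $[\Pi_\ell,\Pi_\ell]$ is a trivector whose components involve only $\del_{z_\ell}$ and $\del_{\zbar_\ell}$, and a trivector supported on a two-dimensional coordinate subspace is necessarily zero, so $[\Pi_\ell,\Pi_\ell]=0$. Equivalently, each $\Pi_\ell$ is, up to the conventions fixed in the introduction, the structure $\tfrac12(x_\ell^2+y_\ell^2)\del_{x_\ell}\wedge\del_{y_\ell}$ on a surface, and every bivector on a surface is Poisson. Hence $[\Pi,\Pi]=0$.

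For invariance I would rewrite each summand as a wedge of Euler-type fields, $\Pi_\ell=i(\zbar_\ell\del_{\zbar_\ell})\wedge(z_\ell\del_{z_\ell})$, using $|z_\ell|^2=z_\ell\zbar_\ell$. As observed in the introduction, the right-translation $R_\zeta\colon z\mapsto z\zeta$ carries $z_\ell\del_{z_\ell}$ to itself (the Euler field is scale-invariant) and, by conjugation, carries $\zbar_\ell\del_{\zbar_\ell}$ to itself; since pushforward acts as a homomorphism on the exterior algebra of multivector fields, $(R_\zeta)_*\Pi_\ell=\Pi_\ell$ and hence $(R_\zeta)_*\Pi=\Pi$. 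Equivalently, substituting the coordinate change $z_\ell'=z_\ell\zeta_\ell$ directly into $i|z_\ell'|^2\del_{\zbar_\ell'}\wedge\del_{z_\ell'}$, the factor $|\zeta_\ell|^2$ arising from $|z_\ell'|^2=|\zeta_\ell|^2|z_\ell|^2$ cancels precisely against the $|\zeta_\ell|^{-2}$ produced by $\del_{z_\ell'}=\zeta_\ell^{-1}\del_{z_\ell}$ and $\del_{\zbar_\ell'}=\bar\zeta_\ell^{-1}\del_{\zbar_\ell}$, returning $\Pi_\ell$. Because $\T_\C^d$ is connected, it would also suffice to verify infinitesimal invariance $\mathcal{L}_\xi\Pi=0$ along the generating fields $z_\ell\del_{z_\ell}+c.c.$ and $iz_\ell\del_{z_\ell}+c.c.$, but the global computation is cleaner.

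This statement presents no essential difficulty: both halves reduce to structural observations (a direct sum of Poisson structures is Poisson, every surface bivector is Poisson, and a wedge of invariant fields is invariant). The only points demanding care are confirming that the off-diagonal Schouten brackets genuinely vanish, and tracking the holomorphic-versus-real normalization conventions fixed in the introduction so that the $|\zeta_\ell|^2$ cancellation in the invariance computation is exact.
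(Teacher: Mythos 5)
Your proof is correct, and your argument for the Poisson condition is essentially the paper's primary one: the paper likewise observes that each summand is automatically Poisson, being a bi-vector field on a space of real dimension $2$, and that $\Pi$ is the induced product Poisson structure; you merely spell out the bilinearity and Schouten-bracket details (vanishing of off-diagonal terms, $\Lambda^3$ of a two-dimensional space being zero) that the paper leaves implicit. Where you genuinely diverge is the invariance claim. The paper derives it from an alternative viewpoint presented in the same proof: the map $\exp\colon \C^d\to\C^d$, $(z_1',\ldots,z_d')\mapsto (e^{z_1'},\ldots,e^{z_d'})$, has open dense image and carries the constant structure $\pi=\sum_\ell i\del_{\zbar_\ell}\wedge\del_{z_\ell}$ to $\Pi$; this gives $[\Pi,\Pi]=\exp_*[\pi,\pi]=0$ on a dense set (hence everywhere, by smoothness), and invariance of $\Pi$ under $\T_\C^d$ falls out at once because $\exp$ intertwines translations of $\C^d$ with the torus action and $\pi$ is translation invariant. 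Your direct verification---writing $\Pi_\ell$ as a wedge of Euler-type fields, or cancelling $|\zeta_\ell|^2$ against the $|\zeta_\ell|^{-2}$ coming from $\del_{z_\ell'}=\zeta_\ell^{-1}\del_{z_\ell}$ and $\del_{\zbar_\ell'}=\bar{\zeta}_\ell^{-1}\del_{\zbar_\ell}$---is correct as well; it is exactly the computation the paper sketches in its introduction for $d=1$, and it keeps the proof elementary and self-contained. What the paper's exponential-map picture buys in exchange is reuse: the same device identifies the symplectic leaves as $\T_\C^d$-orbits and powers the computation of $\frac{1}{n!}\Pi_\Sigma^n$ in Corollary \ref{TopWedgePowerPiSigma}, so obtaining invariance from it costs nothing extra.
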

\begin{proof}
For $d=1$, $\Pi=i|z|^2\del_{\zbar}\wedge \del_z$ is automatically
Poisson being a bi-vector field on a space of real dimension $2$.
The bi-vector field in (\ref{def_of_Pi}) is the product Poisson
structure induced on $\C^d$ and is therefore Poisson. Alternatively,
one could note that the map $\exp\colon \C^d\to \C^d$, defined by
$(z_1',\ldots,z_d')\mapsto (e^{z_1'},\ldots,e^{z_d'})$ has open
dense image and carries the Poisson structure $\pi$ to $\Pi$.  As
$\Pi$ is smooth $[\Pi,\Pi]=\exp_*[\pi,\pi]=0$ on an open dense set,
$\Pi$ is Poisson.  With this point of view, the invariance of $\Pi$
under the action of $\T_\C^d$ is an immediate consequence of the
translation invariance of $\pi$ on $\C^d$.
\end{proof}

It is interesting that this quadratic Poisson structure arises
naturally as the image of a constant Poisson structure under the
exponential map.  In dimension $d=1$, $\Pi=i|z|^2\del_{\zbar}\wedge
\del_z$ has precisely two symplectic leaves, the origin in $\C$ and
its complement $\C_0$. Given the arguments in the previous proof,
the following theorem is immediate.

\begin{theorem}
The symplectic leaves of $(\C^d,\Pi)$ are precisely the orbits of
$\T_\C^d$ on $\C^d$.
\end{theorem}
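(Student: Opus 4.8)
The plan is to reduce the statement to the one-dimensional case already discussed by exploiting the fact that $\Pi$ is a product Poisson structure. A symplectic leaf is a maximal integral manifold of the characteristic distribution $z\mapsto \mathrm{Im}\,\Pi^\#_z\subset T_z\C^d$, so the first step is to identify this distribution explicitly. Since $\Pi=\sum_\ell i|z_\ell|^2\del_{\zbar_\ell}\wedge\del_{z_\ell}$ is a direct sum over the $d$ factors, the sharp map splits accordingly, and a short contraction in the first argument gives $\Pi^\#(dz_\ell)=-i|z_\ell|^2\del_{\zbar_\ell}$ and $\Pi^\#(d\zbar_\ell)=i|z_\ell|^2\del_{z_\ell}$. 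Hence at a point $z$ the image of $\Pi^\#$ is spanned by those directions $\del_{z_\ell},\del_{\zbar_\ell}$ (equivalently $\del_{x_\ell},\del_{y_\ell}$) for which $z_\ell\neq 0$; that is, the characteristic distribution at $z$ is precisely the tangent space to the $\T_\C^d$-orbit $(\C_0)^S$ with $S=\{\ell\colon z_\ell\neq 0\}$.

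Next I would invoke the general principle that the symplectic leaves of a product Poisson manifold are the products of the symplectic leaves of the factors, which is exactly what the splitting of the characteristic distribution above encodes. Each factor $(\C,i|z_\ell|^2\del_{\zbar_\ell}\wedge\del_{z_\ell})$ has, as noted just before the theorem, exactly two leaves: the origin $\{0\}$ and its complement $\C_0$, and these are precisely the two orbits of $\T_\C$. Taking products, the leaves of $(\C^d,\Pi)$ are therefore the sets $L_1\times\cdots\times L_d$ with each $L_\ell\in\{\{0\},\C_0\}$, and such a product is exactly $(\C_0)^S$ for $S=\{\ell\colon L_\ell=\C_0\}$. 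By Notation \ref{notation_torus_orbits} these are in bijection with, and equal to, the orbits of $\T_\C^d$ on $\C^d$, which would finish the argument.

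The one point requiring care — and the only real obstacle — is the passage from the pointwise identification of the characteristic distribution to the identification of the \emph{maximal} integral manifolds with the orbits. I would argue that each orbit $(\C_0)^S$ is a connected immersed submanifold whose tangent space at every point equals the characteristic distribution computed above, so it is an integral manifold; maximality then follows because the leaf through a point and the orbit through that point have equal dimension and the orbit is connected, forcing them to coincide. Equivalently, one may appeal directly to the product-of-leaves statement, whose justification amounts to the same distribution-splitting observation applied to connected maximal integral manifolds. Either route makes the theorem immediate from the $d=1$ case, consistent with the remark preceding the statement.
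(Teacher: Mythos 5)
Your proposal is correct and takes essentially the same route as the paper, which obtains the theorem as ``immediate'' from the arguments of the preceding proof --- the product structure of $\Pi$ together with the $d=1$ picture of two leaves, $\{0\}$ and $\C_0$, matching the two $\T_\C$-orbits. Your explicit computation of $\mathrm{Im}\,\Pi^\#$ and the maximality discussion simply fill in details the paper suppresses; the one imprecision is that the coincidence of orbit and leaf follows from connectedness of the \emph{leaf} (the orbit being open in it by your dimension count, and closed in it since the rank of $\Pi^\#$ drops on the orbit's boundary), not from connectedness of the orbit.
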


In particular, although this Poisson structure is not regular, the
symplectic leaves are finite in number and can be enumerated by the
$2^d$ subsets of the set of indices $\{1,2,\ldots,d\}$, $S\mapsto
(\C_0)^S$ as in the previous section.

\begin{theorem}
The action of $\T_\C^d$ on $(\C^d,\Pi)$ is Poisson but not
Hamiltonian.  However, each symplectic leaf $(\C_0)^{S}$ admits a
Hamiltonian action by the real sub-torus $\T^{S}$ of $\T^d$.
\end{theorem}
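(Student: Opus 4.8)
The assertion that the $\T_\C^d$-action is Poisson requires no new work: the previous theorem already records that $\Pi$ is $\T_\C^d$-invariant, and invariance of the bi-vector field is precisely the statement that the action preserves the bracket. For the two remaining claims the plan is to reduce to a single complex factor, since $\Pi$ is the direct sum of the planar pieces $i|z_\ell|^2\del_{\zbar_\ell}\wedge\del_{z_\ell}$, and then recombine.

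To prove the action is not Hamiltonian it is enough to produce one infinitesimal generator whose fundamental vector field is not globally Hamiltonian. I would take the rotation in the $\ell$-th factor, a generator of the compact sub-torus $\T^d\subset\T_\C^d$, whose fundamental vector field is $iz_\ell\del_{z_\ell}+c.c.$ On the nondegenerate locus $(\C_0)^{\{1,\ldots,d\}}=(\C\setminus\{0\})^d$, where $\Pi^\#$ is invertible, the equation $\Pi^\#(df)=iz_\ell\del_{z_\ell}+c.c.$ determines $df$, and hence $f$ up to an additive constant because this locus is connected; the convention $\Pi^\#(\nu)=\Pi(\nu,\cdot)$ together with $\del_{z}\log|z|^2=1/z$ gives $f=\log|z_\ell|^2$. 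Now suppose the generator were globally Hamiltonian on $\C^d$. Its global Hamiltonian would agree with $\log|z_\ell|^2$ up to a constant on the dense nondegenerate locus, and continuity would force $\log|z_\ell|^2$ to extend across $\{z_\ell=0\}$. This is impossible, since $\log|z_\ell|^2\to-\infty$ as $z_\ell\to0$. Hence this generator is not globally Hamiltonian, and therefore neither $\T^d$ nor the larger group $\T_\C^d$ acts in a Hamiltonian fashion.

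For the leafwise statement I would fix $S$ and work on $(\C_0)^S$, where the coordinates $z_s$ with $s\in S$ are nonzero, the coordinates $z_\ell$ with $\ell\in S^c$ vanish, and $\Pi$ restricts to the symplectic structure $\sum_{s\in S}i|z_s|^2\del_{\zbar_s}\wedge\del_{z_s}$ on the leaf. The sub-torus $\T^S$ preserves $(\C_0)^S$, acting by rotating the nonvanishing coordinates, and it acts exactly by the fields that were obstructed on all of $\C^d$. The key observation is that the obstruction evaporates on the leaf: each function $\log|z_s|^2$ is smooth there because $z_s\ne0$ throughout $(\C_0)^S$. I would therefore take the comoment $\epsilon_s\mapsto c_s\log|z_s|^2$, with $c_s$ the constant matching the chosen torus generator, and verify it is a momentum map by checking $\Pi^\#(d\log|z_s|^2)=iz_s\del_{z_s}+c.c.$ on the leaf and that the components Poisson-commute. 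The latter is immediate: $\log|z_s|^2$ depends only on $|z_s|$, and rotation in $z_s$ fixes every $|z_{s'}|$ with $s'\ne s$, so all brackets vanish and the comoment is a Lie-algebra homomorphism into the abelian Poisson algebra $C^\infty((\C_0)^S)$, yielding a genuinely Hamiltonian action.

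The one genuinely delicate step is the nonexistence argument, where one must exclude every possible Hamiltonian and not merely the evident candidate. This is secured by uniqueness of Hamiltonians up to a constant on the connected nondegenerate locus, together with the density of that locus in $\C^d$, which pins any hypothetical global Hamiltonian to the singular function $\log|z_\ell|^2$ and thereby forces a contradiction. Everything else is a routine product computation once the single-factor picture is in hand.
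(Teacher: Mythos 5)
Your proposal is correct and takes essentially the same approach as the paper: reduce to a single complex factor, identify $\log|z_\ell|^2$ as the Hamiltonian (unique up to a constant) for the rotation generator on the dense symplectic locus, use its non-extendability across $\{z_\ell=0\}$ to rule out any global Hamiltonian, and assemble $\sum_{s\in S}\log|z_s|^2$ into a momentum map for $\T^S$ on each leaf $(\C_0)^S$. The paper additionally notes, via the multivaluedness of $\mathrm{Arg}(z)$, that the dilation directions of $\T_\C^d$ fail to be Hamiltonian even on the open leaf, but that extra information is not needed for the statement as given.
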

\begin{proof}
Due to the product structure of $\Pi$ it suffices to establish the
first claim in dimension $d=1$.  The infinitesimal action of $\T$ on
$\C$ is generated by the vector field $\frac{i}{\hbar}z\del_z+c.c.$
The function $\frac{1}{\hbar}\log |z|^2$, defined for $z\not=0$,
satisfies $\Pi^\#(\frac{1}{\hbar}d \log
|z|^2)=\frac{i}{\hbar}z\del_z+c.c.$ there. So, on the open
symplectic leaf $\C_0$, the action of $\T$ is Hamiltonian. Since
$\Pi$ is non-degenerate on $\C_0$, any other Hamiltonian function
must differ from $\frac{1}{\hbar}\log |z|^2$ by a constant there and
since none of these functions extend to the origin, it follows that
the action of $\T$ on $(\C,\Pi)$ is Poisson but not Hamiltonian.
However, its restriction to $\C_0$ is Hamiltonian.

The action of $\T_\C/\T$ is generated infinitesimally by the radial
field $z\del_z+c.c.$  A Hamiltonian function with respect to $\Pi$
for this field is $\frac{1}{\hbar}\mathrm{Arg}(z)$ which can be
defined on $\C\setminus (-\infty,0]$.  Of course, this function can
not even be extended to $\C_0$, let alone to all of $\C$.  So the
action of $\T_\C/\T$ is not Hamiltonian on $(\C_0,\Pi)$ or
$(\C,\Pi)$.

For general $d$ consider the leaf $(\C_0)^S$ indexed by an arbitrary
$S\subset \{1,2,\ldots,d\}$.  By the arguments above, it is clear
that the action of $\T_\C^d/\T_\C^{S^c}$ on $(\C_0)^S$ is Poisson
but not globally Hamiltonian.  However, one can define
\begin{equation}
\Phi_S(z)=\sum_{s\in S}\frac{1}{\hbar}\log |z_s|^2\epsilon_s,
\end{equation}
on $(\C_0)^S$.  Then $\Phi_S\colon (\C_0)^S\to (\R^d)^*$ is a
momentum mapping for the action of the compact sub-torus $\T^S$ on
$(\C_0)^S$.
\end{proof}

Let $T$ be a compact abelian Lie group, let $\Delta$ be a Delzant
polytope in $\t^*$, and let $\Sigma$ be its dual fan in $\t$. Since
$\Pi$ on $\C^d$ is invariant under the action of $\T_\C^d$, so is
its restriction to $\mathcal U_\Sigma$ and thus $\Pi$ descends to
the toric variety $X(\Sigma)=\mathcal U_\Sigma/N_\C$.
\begin{notation}
Let $\Pi_\Sigma$ denote the Poisson structure on $X(\Sigma)$
coinduced by the quotient map $q\colon (\mathcal U_\Sigma,\Pi) \to
\mathcal U_\Sigma/N_\C=X(\Sigma)$.
\end{notation}

Given that the symplectic leaves of $(\mathcal U_\Sigma,\Pi)$ are
$\T_\C^d$-orbits, the following theorem is immediate given the
results of section \ref{Construction_of_Varieties}.

\begin{theorem}
The symplectic leaves of $(X(\Sigma),\Pi_\Sigma)$ are in bijection
with the set of cones in $\Sigma$, or equivalently, with the set of
faces of the polytope $\Delta$.
\end{theorem}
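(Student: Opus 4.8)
The plan is to show that the symplectic leaves of $\Pi_\Sigma$ are precisely the $T_\C$-orbits in $X(\Sigma)$; the asserted bijection then follows at once from the proposition identifying the $T_\C$-orbits of complex codimension $\ell$ with the faces of $\Delta$ of real codimension $\ell$ (equivalently the cones of $\Sigma$ of dimension $\ell$). Two facts are available to begin. First, by the theorem above identifying the symplectic leaves of $(\C^d,\Pi)$ with the $\T_\C^d$-orbits, the leaves of its restriction to the open set $\mathcal U_\Sigma$ are exactly the orbits $(\C_0)^S$ that lie in $\mathcal U_\Sigma$. Second, the quotient map $q\colon (\mathcal U_\Sigma,\Pi)\to (X(\Sigma),\Pi_\Sigma)$ is by construction a surjective Poisson submersion, equivariant for the surjection $\T_\C^d\to T_\C\simeq N_\C\backslash\T_\C^d$. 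Consequently $q$ carries each $\T_\C^d$-orbit $(\C_0)^S$ onto a single $T_\C$-orbit $\mathcal O_S\subset X(\Sigma)$, and every $T_\C$-orbit arises in this way.

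First I would record the standard consequence of $q$ being Poisson: for $f,g\in C^\infty(X(\Sigma))$ one has $\{q^*f,q^*g\}=q^*\{f,g\}$, so the Hamiltonian field of $q^*f$ is $q$-related to that of $f$. Pointwise this reads $\Pi_{\Sigma,q(m)}^\#(\xi)=dq_m(\Pi_m^\#(q_m^*\xi))$, and letting $L=(\C_0)^S$ it yields the inclusion of tangent spaces $T_{q(m)}L'\subseteq dq_m(T_mL)$, where $L'$ is the leaf of $\Pi_\Sigma$ through $q(m)$ and $T_mL$ is the image of $\Pi^\#_m$. On the other hand, equivariance makes $q|_L\colon (\C_0)^S\to\mathcal O_S$ a surjective submersion, so $dq_m(T_mL)=T_{q(m)}\mathcal O_S$. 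Since $\mathcal O_S\subseteq L'$ forces $T_{q(m)}\mathcal O_S\subseteq T_{q(m)}L'$, all three spaces coincide; hence $\dim\mathcal O_S=\dim L'$ and $\mathcal O_S$ is open in the leaf $L'$ that contains it.

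To finish, I would invoke connectedness. Because $\Pi_\Sigma$ is $T_\C$-invariant and $T_\C$ is connected, $T_\C$ preserves each symplectic leaf, so any leaf $L'$ is a disjoint union of $T_\C$-orbits; by the previous step each such orbit is open in $L'$, partitioning $L'$ into relatively open pieces, and since a symplectic leaf is connected, $L'$ must equal a single $T_\C$-orbit. This identifies the leaves of $\Pi_\Sigma$ with the $T_\C$-orbits, and the bijection with cones of $\Sigma$ (faces of $\Delta$) follows. The main obstacle is the openness assertion of the middle paragraph: one must be careful that $\Pi_\Sigma$ is not regular, so the characteristic distribution changes rank, and the crucial equality $dq_m(T_mL)=T_{q(m)}\mathcal O_S$ rests on the explicit fact that the leaves upstairs are \emph{full} $\T_\C^d$-orbits rather than on any regularity hypothesis. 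Everything else is formal once the Poisson-submersion relation between the two characteristic distributions is in hand.
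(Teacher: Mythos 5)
Your overall strategy---identify the leaves of $\Pi_\Sigma$ with the $T_\C$-orbits and then quote the orbit/face correspondence from Section \ref{Construction_of_Varieties}---is exactly the route the paper takes (it simply declares the theorem immediate from those two facts), and your attempt to supply the missing details is in the right spirit. But the details have a genuine gap at the one point where real content is required: the inclusion $\mathcal O_S\subseteq L'$ of a $T_\C$-orbit into the symplectic leaf through one of its points. In your middle paragraph this inclusion is simply asserted, and in your final paragraph it is deduced from the principle that a connected group acting by Poisson diffeomorphisms preserves each symplectic leaf. That principle is false: Poisson diffeomorphisms only permute leaves, and a connected group can move them. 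For example, the translation flow of $\del_y$ on $(\R^2,\pi=x\,\del_x\wedge\del_y)$ is a connected group of Poisson automorphisms, and $\pi$ is even nondegenerate on a dense open set, yet it translates the point-leaves lying on the $y$-axis. A Poisson action preserves each leaf precisely when its fundamental vector fields take values in the characteristic distribution, i.e.\ precisely when $T_x(T_\C\cdot x)\subseteq \mathrm{Im}\,\Pi^{\#}_{\Sigma,x}$---which is the very inclusion you are trying to establish, so the reasoning is circular. Note also that the Poisson-submersion relation by itself gives only the \emph{opposite} containment: $\mathrm{Im}\,\Pi^\#_{\Sigma,q(m)}=dq_m\bigl(\Pi^\#_m(\Ann(\ker dq_m))\bigr)\subseteq dq_m(T_mL)=T_{q(m)}\mathcal O_S$, and in general $\Pi^\#_m(\Ann(\ker dq_m))$ may be strictly smaller than $T_mL$; so what you have actually proved is only that leaves are no bigger than orbits.

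To close the gap you must prove $T_{q(m)}\mathcal O_S\subseteq \mathrm{Im}\,\Pi^\#_{\Sigma,q(m)}$, equivalently $dq_m\bigl(\Pi^\#_m(\Ann(\ker dq_m))\bigr)=dq_m(T_mL)$. Since $\ker dq_m\subseteq T_mL$ and $\Pi$ restricts to a symplectic form on the leaf $L=(\C_0)^S$, the subspace $\Pi^\#_m(\Ann(\ker dq_m))$ is the leafwise symplectic orthogonal of $\ker dq_m$, so the needed equality says exactly that $\ker dq_m$ (the tangent space to the $N_\C$-orbit) is a \emph{symplectic} subspace of the leaf. This is true here for a reason your argument never invokes: in logarithmic coordinates on $(\C_0)^S$ the structure $\Pi$ becomes the standard constant (K\"ahler) one, the $\n_\C$-directions become a complex subspace, and complex subspaces are nondegenerate for a K\"ahler form. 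Alternatively one can argue downstairs, using the local formula of Theorem \ref{local_representation} together with the explicit Hamiltonians $\sum_{\ell} b_{k\ell}\log|w_\ell|^2$ and local argument functions, as in the proof of Theorem \ref{MomentumMaponLeaves}, to exhibit the orbit directions in the image of $\Pi_\Sigma^\#$. Once the pointwise equality of the two distributions is in hand, orbits are connected integral manifolds of the characteristic distribution and hence lie in leaves, and your openness-plus-connectedness ending then finishes the proof correctly.
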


The next theorem shows how to construct $\Pi_\Sigma$ locally and the
following notation will arise and be used throughout the rest of the
paper.

\begin{notation}\label{B_VNotation}
Given a vertex $V$ of $\Delta$ labeled by the set
$S_V=\{s_1,\ldots,s_n\}$ where $s_1<s_2<\dots<s_n$. Let
$s_{n+1}<\dots<s_d$ denote the elements of $S_V^c$.  Let $A_V$
denote the $n\times (d-n)$ integral matrix such that
$u_{s_{n+\ell}}+\sum_{k=1}^n u_{s_k}A_{k\ell}=0$. Then
$B_V=1+A_V(A_V)^t$.
\end{notation}

\begin{theorem}\label{local_representation}
Let $V$ be a vertex of $\Delta$.  This determines a $n\times (d-n)$
integral matrix $A_V$ and an associated coordinate chart $w_V\colon
zN_\C\mapsto (w_1,\ldots,w_n)$.  In terms of these coordinates the
Poisson structure $\Pi_\Sigma$ has the form
\begin{equation}
\Pi_\Sigma  =  \sum_{p,q=1}^n iB_{pq} \wbar_p w_q
\del_{\wbar_p}\wedge \del_{w_q}
\end{equation}
where $B_{pq}$ are the components of the symmetric positive definite
integral matrix $B_{V}=1+A_{V}(A_{V})^t$.  In particular,
$\Pi_\Sigma$ is a homogeneous quadratic Poisson structure in each
vertex chart.
\end{theorem}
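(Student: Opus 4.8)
The plan is to pull the coordinate computation back to the cover $\mathcal U_V \subset \C^d$, where $\Pi$ has the explicit product form (\ref{def_of_Pi}), and push it forward through the trivializing section established in the previous propositions. Recall that for a vertex $V$ with $S_V = \{s_1,\ldots,s_n\}$, the chart $w_V$ is given by the monomial map $w_\ell = z_{s_\ell}\tilde z^{-a_\ell}$, where $\tilde z^{-a_\ell} = z_{s_{n+1}}^{-a_{\ell 1}}\cdots z_{s_d}^{-a_{\ell(d-n)}}$. The key point is that $\Pi_\Sigma$ is defined as the coinduced structure, so its value in the $w$-coordinates is obtained by applying $dw_V$ to $\Pi$ evaluated along the section; equivalently, I would compute the pushforward of $\Pi|_{\mathcal U_V}$ under the quotient $q$ and read it off in the $w$-coordinates.

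First I would compute the differentials of the coordinate functions. Since $\log w_p = \log z_{s_p} - \sum_{\ell} a_{p\ell}\log z_{s_{n+\ell}}$ (after relabeling so $A_{p\ell}$ indexes the matrix $A_V$), taking $\del$ and $\del_{\zbar}$ gives clean logarithmic-derivative relations: $\frac{dw_p}{w_p} = \frac{dz_{s_p}}{z_{s_p}} - \sum_\ell A_{p\ell}\frac{dz_{s_{n+\ell}}}{z_{s_{n+\ell}}}$, and similarly for the conjugates. The reason this is the right move is that $\Pi$ in the form $\sum_k i|z_k|^2 \del_{\zbar_k}\wedge\del_{z_k}$ is naturally expressed through the logarithmic vector fields $z_k\del_{z_k}$: one checks that $\Pi = \sum_k i\, (\zbar_k\del_{\zbar_k})\wedge(z_k\del_{z_k})$, so that $\Pi$ acts on the logarithmic differentials by pairing $\frac{dz_k}{z_k}$ with coefficient $i$ in the corresponding slot. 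Thus evaluating $\Pi(\frac{d\wbar_p}{\wbar_p}, \frac{dw_q}{w_q})$ reduces to a bilinear pairing of the integer vectors $(\delta_{pk}, -A)$ recording how $w_p$ depends on each $z_k$.

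The central computation is then to show that this pairing produces exactly the matrix $B_V = 1 + A_V A_V^t$. Expanding $\Pi(\frac{d\wbar_p}{\wbar_p}, \frac{dw_q}{w_q})$, the contribution from the $s_1,\ldots,s_n$ coordinates yields the Kronecker term $i\delta_{pq}$ (the identity $1$), while the contribution from the $s_{n+1},\ldots,s_d$ coordinates, each carrying coefficients $A_{p\ell}$ and $A_{q\ell}$, yields $i\sum_\ell A_{p\ell}A_{q\ell} = i(A_V A_V^t)_{pq}$. Summing gives $iB_{pq}$, and converting back from logarithmic to ordinary coordinates multiplies by $\wbar_p w_q$, producing $\Pi_\Sigma = \sum_{p,q} iB_{pq}\wbar_p w_q\,\del_{\wbar_p}\wedge\del_{w_q}$. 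That $B_V$ is symmetric positive definite is automatic, since $1$ is positive definite and $A_V A_V^t$ is positive semidefinite.

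The main obstacle I anticipate is bookkeeping rather than conceptual: one must verify that the coinduction genuinely yields this pushforward, i.e.\ that the $N_\C$-invariance guarantees $\Pi|_{\mathcal U_V}$ is $q$-related to a well-defined bivector on the quotient, and that the section $[w_1:\cdots:w_n:1:\cdots:1]\mapsto(w_1,\ldots,w_n,1,\ldots,1)$ transports it correctly. Concretely, I would note that $\Pi$ annihilates the vertical directions of the $N_\C$-action (these are spanned by the $c_\ell$-generated vector fields, which lie in the kernel defining $\mathcal U_V/N_\C$), so the pushforward is unambiguous and the logarithmic-differential computation above computes it faithfully. The remaining care is purely in tracking the permutation $\sigma$ and the index ranges so that the factor $1$ in $B_V$ attaches to the first $n$ slots and the $A_V A_V^t$ term to the complementary ones.
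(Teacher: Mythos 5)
Your computation is correct and is essentially the paper's own proof, dually phrased: the paper pushes the logarithmic vector fields $z_r\del_{z_r}$ forward through the quotient (computing $z_r\,\del w_s/\del z_r$), whereas you pair $\Pi$ against the pulled-back logarithmic differentials $dw_p/w_p$; both arguments exploit the monomial form of the vertex chart and reduce to the same integer pairing $\delta_{pq}+\sum_\ell A_{p\ell}A_{q\ell}$, i.e.\ $B_V=1+A_VA_V^t$. Your closing remark that positive definiteness is automatic (identity plus the positive semidefinite $A_VA_V^t$) fills in a detail the paper asserts without comment.

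One side remark in your last paragraph is false, although nothing in your argument actually rests on it: the claim that ``$\Pi$ annihilates the vertical directions of the $N_\C$-action.'' On the open $\T_\C^d$-orbit $(\C_0)^{\{1,\ldots,d\}}$, $\Pi$ is nondegenerate (that orbit is the open symplectic leaf of $(\C^d,\Pi)$), while the $N_\C$-orbits through such points have positive dimension $d-n$; a nondegenerate bivector cannot ``kill'' any nonzero direction under any reasonable reading of that phrase. The pointwise pushforward $dq(\Pi_z)$ is defined for every $z$ with no hypothesis at all; the issue is whether it is independent of the choice of $z$ in the fiber, and that is guaranteed exactly by what you stated in the preceding sentence, namely $N_\C$-invariance ($\mathcal L_X\Pi=0$ for the fundamental vector fields $X$ of $\n_\C$) together with connectedness of the $N_\C$-orbits. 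Equivalently, and this is all your computation uses: since $\Pi_\Sigma$ is by definition the coinduced structure, evaluating $\Pi$ on the differentials of the $N_\C$-invariant functions $w_p\circ q=z_{s_p}\tilde z^{-a_p}$ computes $\Pi_\Sigma(d\wbar_p,dw_q)$ directly, so no separate statement about vertical directions is needed.
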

\begin{proof}
Again, it suffices to consider the case where
$S_\nu=\{1,2,\ldots,n\}$.  Now for each $s=1,2,\ldots,n$,
\[
z_r\frac{\del w_s}{\del
z_r}=z_r\widetilde{z}^{-a_s}\delta_{rs}\text{ if
 }r=1,2,\ldots,n
\]
 whereas
\[
z_r\frac{\del w_s}{\del_{z_r}}=-a_{sr}z_s
\widetilde{z}^{-a_s}=-a_{sm}w_s
\]
for each $r=n+m$, where  $m=1,2,\ldots,d-n$.
  Thus, under the quotient map
$z_r\del_{z_r}\mapsto w_r\del_{w_r}$ for each $r=1,2,\ldots,n$
whereas $z_r\del_{z_r}\mapsto -\sum_{s=1}^n a_{sm}w_s\del_{w_s}$ for
each $r=n+m$ where $m=1,2,\ldots, d-n$.  Hence, $\Pi$ maps to
\begin{eqnarray*}
\Pi_\Sigma & = & \sum_{r=1}^n i|w_r|^2\del_{\wbar_r}\wedge \del_{w_r} + \sum_{m=1}^{d-n}\left(\sum_{s=1}^n a_{sm}\wbar_s\del_{\wbar_s}\right)\wedge\left(\sum_{t=1}^n a_{tm}w_t\del_{w_t}\right)\\
           & = & \sum_{\ell=1}^n \left(1+\sum_{m=1}^{d-n}a_{\ell m}a_{\ell m}\right) i|w_\ell|^2
\del_{\wbar_\ell}\wedge \del_{w_\ell} \\
& &  + \sum_{k<\ell} \left(\sum_{m=1}^{d-n} a_{km}a_{\ell m}\right)
i\wbar_k w_\ell
           \del_{\wbar_k}\wedge \del_{w_\ell} + c.c.
\end{eqnarray*}
The coefficients are clearly the components of the matrix
$B_{V}=1+A_{V}(A_{V})^t$ as claimed.
\end{proof}

\begin{corollary}\label{TopWedgePowerPiSigma}
Let $V$ be a vertex of $\Delta$. In terms of the vertex coordinates
$w_1,\ldots,w_n$,
\begin{equation}
\textstyle{\frac{1}{n!}}\Pi_\Sigma^n = i^n \det(B_{V})|w_1|^2\dots
|w_n|^2\,\del_{\wbar_1}\wedge \del_{w_1}\wedge\ldots \wedge
\del_{\wbar_n}\wedge \del_{w_n}.
\end{equation}
\end{corollary}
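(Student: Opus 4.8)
The plan is to compute the $n$-fold wedge power directly from the local expression of Theorem \ref{local_representation} by treating each summand of $\Pi_\Sigma$ as a commuting bivector and reading off the result as a determinant. Write $Z_{pq}=\wbar_p w_q\,\del_{\wbar_p}\wedge\del_{w_q}$, so that the local formula becomes $\Pi_\Sigma=i\sum_{p,q=1}^n B_{pq}Z_{pq}$ with $B=B_V$. First I would expand the product multinomially,
\[
\Pi_\Sigma^n=i^n\!\!\sum_{p_1,q_1,\ldots,p_n,q_n} B_{p_1q_1}\cdots B_{p_nq_n}\;Z_{p_1q_1}\wedge\cdots\wedge Z_{p_nq_n}.
\]
Because each $Z_{pq}$ carries the factor $\del_{\wbar_p}\wedge\del_{w_q}$, any term in which two of the $p_k$ coincide, or two of the $q_k$ coincide, contains a repeated $\del_{\wbar}$ or $\del_w$ and hence vanishes. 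Thus only the terms where $(p_1,\ldots,p_n)$ and $(q_1,\ldots,q_n)$ are both permutations of $\{1,\ldots,n\}$ survive, indexed by a pair $(\sigma,\tau)\in S_n\times S_n$.

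Next I would exploit that each $Z_{pq}$ is a bivector, hence of even degree, so the $Z$'s commute under $\wedge$ with no sign. Reordering each surviving term so that the first index increases turns $Z_{\sigma(1)\tau(1)}\wedge\cdots\wedge Z_{\sigma(n)\tau(n)}$ into $Z_{1\rho(1)}\wedge\cdots\wedge Z_{n\rho(n)}$, where $\rho=\tau\circ\sigma^{-1}$, and $\prod_k B_{\sigma(k)\tau(k)}=\prod_j B_{j\rho(j)}$. For each fixed $\rho$ there are exactly $n!$ pairs $(\sigma,\tau)$ with $\tau\sigma^{-1}=\rho$, which is precisely what cancels the $\tfrac{1}{n!}$. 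Moreover, in $Z_{1\rho(1)}\wedge\cdots\wedge Z_{n\rho(n)}$ the scalar coefficient is $\prod_j \wbar_j w_{\rho(j)}=|w_1|^2\cdots|w_n|^2$, independent of $\rho$ since $\rho$ is a permutation. Hence
\[
\textstyle{\frac{1}{n!}}\Pi_\Sigma^n=i^n|w_1|^2\cdots|w_n|^2\sum_{\rho\in S_n}\Big(\prod_{j=1}^n B_{j\rho(j)}\Big)\,\del_{\wbar_1}\wedge\del_{w_{\rho(1)}}\wedge\cdots\wedge\del_{\wbar_n}\wedge\del_{w_{\rho(n)}}.
\]

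The step I expect to require the most care is the sign bookkeeping in the final reordering, and it is what produces the determinant. Rearranging $\del_{\wbar_1}\wedge\del_{w_{\rho(1)}}\wedge\cdots\wedge\del_{\wbar_n}\wedge\del_{w_{\rho(n)}}$ into the standard form $\del_{\wbar_1}\wedge\del_{w_1}\wedge\cdots\wedge\del_{\wbar_n}\wedge\del_{w_n}$ yields exactly the factor $\mathrm{sgn}(\rho)$: segregating the $\del_{\wbar}$'s from the $\del_w$'s costs $(-1)^{\binom{n}{2}}$ in both the $\rho$-term and the reference $\rho=\mathrm{id}$ term, so these cancel, leaving only the sign of sorting $\del_{w_{\rho(1)}},\ldots,\del_{w_{\rho(n)}}$ into $\del_{w_1},\ldots,\del_{w_n}$, namely $\mathrm{sgn}(\rho)$. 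Substituting this and recognizing $\sum_{\rho\in S_n}\mathrm{sgn}(\rho)\prod_j B_{j\rho(j)}=\det(B_V)$ gives the asserted formula. (The symmetry $B_V=1+A_V A_V^t$ is consistent but not needed for this computation.)
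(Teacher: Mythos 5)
Your proof is correct, but it takes a genuinely different route from the paper's. The paper does not expand the wedge power in the $w$-coordinates at all: it observes that $\Pi_\Sigma$ is the pushforward under the coordinatewise exponential $\exp\colon \C^n\to\C^n$, $z\mapsto w=\exp(z)$, of the \emph{constant} bivector field $\tilde{\pi}=\sum_{p,q=1}^n iB_{pq}\,\del_{\zbar_p}\wedge\del_{z_q}$, invokes the constant-coefficient identity
\[
\textstyle{\frac{1}{n!}}\tilde{\pi}^n=i^n\det(B_V)\,\del_{\zbar_1}\wedge\del_{z_1}\wedge\dots\wedge\del_{\zbar_n}\wedge\del_{z_n},
\]
and then uses the fact that pushforward commutes with wedge products, so that $\frac{1}{n!}\Pi_\Sigma^n=\exp_*\bigl(\frac{1}{n!}\tilde{\pi}^n\bigr)$; the substitution $\del_{z_\ell}\mapsto w_\ell\del_{w_\ell}$, $\del_{\zbar_\ell}\mapsto\wbar_\ell\del_{\wbar_\ell}$ produces the factor $|w_1|^2\cdots|w_n|^2$. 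What the paper's route buys is brevity and consistency with the way $\Pi$ was shown to be Poisson in the first place (the same exponential trick), but it leaves the constant-coefficient determinant identity unproved. Your multinomial expansion --- the vanishing of terms with repeated $p$'s or $q$'s, the reduction to pairs $(\sigma,\tau)$, the count of $n!$ pairs with $\tau\sigma^{-1}=\rho$ cancelling the $\frac{1}{n!}$, the observation that $\prod_j\wbar_jw_{\rho(j)}=|w_1|^2\cdots|w_n|^2$, and the cancellation of the two $(-1)^{\binom{n}{2}}$ factors leaving $\mathrm{sgn}(\rho)$ --- is exactly the combinatorial content that the paper's cited identity suppresses, carried out with the variable coefficients in place. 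So your argument is more self-contained and elementary, at the cost of the sign bookkeeping you correctly flag as the delicate step; your closing remark that the symmetry of $B_V$ is never used applies equally to the paper's proof.
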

\begin{proof}
Let $\tilde{\pi}=\sum_{p,q=1}^n B_{V}^{pq} i\del_{\zbar_p}\wedge
\del_{z_q}$ on $\C^n$.  Then $\tilde{\pi}\mapsto \Pi_\Sigma$ under
the map $z\mapsto \exp(z)=w$.  Furthermore,
\[
\textstyle{\frac{1}{n!}}\tilde{\pi}^n=i^n\det(B_V)
\,\del_{\zbar_1}\wedge\del_{z_1}\wedge\dots \wedge
\del_{\zbar_n}\wedge \del_{z_n}
\]
and therefore
\begin{eqnarray*}
\textstyle{\frac{1}{n!}}\Pi_\Sigma^n & = & \textstyle{\frac{1}{n!}}(\exp_*\tilde{\pi})^n \\
& = & \exp_*(\textstyle{\frac{1}{n!}}\tilde{\pi}^n) \\
& = & i^n \det (B_{V})|w_1|^2\dots
|w_n|^2\,\del_{\wbar_1}\wedge\del_{w_1}\wedge\dots \wedge
\del_{\wbar_n}\wedge \del_{w_n}
\end{eqnarray*}
as was to be shown.
\end{proof}

\begin{theorem}\label{MomentumMaponLeaves}
The action of $T_\C$ on $(X(\Sigma),\Pi_\Sigma)$ is Poisson but not
Hamiltonian.  However, each symplectic leaf of
$(X(\Sigma),\Pi_\Sigma)$ has a Hamiltonian action by a sub-torus of
$T$.
\end{theorem}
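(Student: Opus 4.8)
The plan is to treat the three assertions in turn, in each case reducing to the planar ($d=1$) picture already established for $(\C,\Pi)$ and transporting it through the quotient map $q\colon\mathcal U_\Sigma\to X(\Sigma)$.

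For the \emph{Poisson} assertion I would argue by descent. The action of $\T_\C^d$ on $(\mathcal U_\Sigma,\Pi)$ is by Poisson diffeomorphisms, being the restriction of the $\T_\C^d$-invariant structure $\Pi$ on $\C^d$, and since $\T_\C^d$ is abelian this action commutes with the $N_\C$-action. Hence it descends to an action of $T_\C\simeq N_\C\backslash\T_\C^d$ on $X(\Sigma)$. Because $q$ is a Poisson map by the very definition of the coinduced structure $\Pi_\Sigma$, and each element of $\T_\C^d$ preserves $\Pi$, the induced transformations of $X(\Sigma)$ preserve $\Pi_\Sigma$, so the $T_\C$-action is Poisson.

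The main obstacle is the \emph{not Hamiltonian} assertion, for which I would convert the heuristic ``the argument function does not extend'' into a global cohomological statement. It suffices to exhibit a single Lie algebra generator whose fundamental vector field is not globally Hamiltonian, so I restrict to the open dense leaf $L_0$, the open $T_\C$-orbit corresponding to $\Delta^\circ$. In any vertex chart $w_V$, Theorem \ref{local_representation} identifies $L_0$ with $(\C^*)^n=\{w_1\cdots w_n\neq 0\}$, on which $\Pi_\Sigma$ is non-degenerate; let $\omega_0$ denote the associated symplectic form. Since the $T_\C$-action preserves $\omega_0$, Cartan's formula gives $d(\omega_0^\#(\xi_{L_0}))=0$ for every generator $\xi$, and $\xi_{L_0}$ is Hamiltonian on $L_0$ precisely when $[\omega_0^\#(\xi_{L_0})]\in H^1(L_0)$ vanishes. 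I would take $\xi$ in the radial directions $i\t$, which act in the chart by (multiples of) the fields $w_\ell\del_{w_\ell}+c.c.$ As in the proof of Corollary \ref{TopWedgePowerPiSigma}, the substitution $w=\exp(\zeta)$ turns $\Pi_\Sigma$ into the constant bivector $\sum_{p,q}iB_{pq}\del_{\overline{\zeta}_p}\wedge\del_{\zeta_q}$, whose inverse pairs the radial directions $\del_{\mathrm{Re}\,\zeta_\ell}$ with the angular directions through the invertible matrix $B_V$. Consequently $\omega_0^\#(\xi_{L_0})$ has nonzero periods around the angular cycles of $(\C^*)^n$ for every nonzero radial $\xi$, so $[\omega_0^\#(\xi_{L_0})]\neq 0$ and $\xi_{L_0}$ is not Hamiltonian on $L_0$. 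Were the global $T_\C$-action Hamiltonian with momentum map $\Phi$, the restriction $\langle\Phi,\xi\rangle|_{L_0}$ would be a Hamiltonian for $\xi_{L_0}$, a contradiction. This is exactly the multivaluedness of $\mathrm{Arg}$ from the planar case, now packaged as a nonvanishing de Rham period; I expect making this period nonzero (rather than merely the noncompactness intuition) to be the crux.

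Finally, for Hamiltonicity on each leaf I would descend the planar momentum maps $\Phi_S$. A leaf $L$ corresponds to a face $F$ of $\Delta$; choosing a vertex $V$ of $F$ and its chart, $L$ lies entirely in $\mathcal U_V/N_\C$ and Theorem \ref{local_representation} presents it as $\{w_i\neq 0:i\in I\}\cap\{w_j=0:j\notin I\}\cong(\C^*)^{|I|}$ with $\Pi_\Sigma|_L=\sum_{p,q\in I}iB_{pq}\wbar_p w_q\del_{\wbar_p}\wedge\del_{w_q}$. Since $B_V=1+A_V(A_V)^t$ is positive definite, its principal submatrix indexed by $I$ is invertible, so $\Pi_\Sigma|_L$ is symplectic; the compact sub-torus $T_F\subset T$ generated by $\{u_{s_i}:i\in I\}$ acts by the rotations $iw_i\del_{w_i}+c.c.$, and the functions $\tfrac{1}{\hbar}\log|w_i|^2$, which are smooth on $L$ because $w_i\neq 0$ there, furnish Hamiltonians for them. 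Assembling these gives a momentum map for the $T_F$-action; equivalently, this is the descent through $q$ of the $\T^d$-invariant momentum map $\Phi_S$ on the leaf $(\C_0)^S$ of $(\C^d,\Pi)$. Hence $T_F$ acts on $L$ in a Hamiltonian fashion, completing the proof.
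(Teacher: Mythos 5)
Your first two parts are correct. The Poisson assertion is handled exactly as in the paper (there it is treated as immediate from the definition of the coinduced structure). For the non-Hamiltonian assertion you take a genuinely different route: the paper works with the compact generators $\t$, whose Hamiltonians on the open leaf are built from $\log|w_\ell|^2$ and blow up at the leaf boundary, so that (by non-degeneracy on the dense leaf, which pins Hamiltonians down up to constants) no global momentum map can exist; you instead obstruct the radial generators $i\t$ already on the open leaf, by exhibiting a nonvanishing period of the closed $1$-form $\omega_0^\#(\xi_{L_0})$ around the angular cycles of $(\C^*)^n$. Your version is a clean cohomological packaging of the paper's planar remark that $\mathrm{Arg}$ is multivalued, and it avoids the extension-to-the-boundary argument altogether.

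The third part, however, contains a genuine error. With the paper's conventions, on the leaf $L$ one computes
\[
\Pi_\Sigma^\#\bigl(d\log|w_i|^2\bigr)\;=\;\sum_{q\in I} B_{iq}\,\bigl(iw_q\del_{w_q}+c.c.\bigr),
\]
so $\tfrac{1}{\hbar}\log|w_i|^2$ is \emph{not} a Hamiltonian for the single rotation field $iw_i\del_{w_i}+c.c.$; it generates the $B_F$-weighted combination of \emph{all} the rotations, and $B_F$ is never the identity (already for $\CP^1$ one has $B_V=(2)$, and in general the diagonal entries of $B_V$ exceed $1$). This mixing is precisely the nontrivial content of the paper's proof: there the momentum map is defined as $\psi_F=(h/\hbar)\sum_{k,\ell} b_{k\ell}\log|w_\ell|^2\,\eta_k$ with $b_F=B_F^{-1}$, and the displayed computation in the paper verifies that precomposing with $b_F$ undoes the mixing, yielding $\Pi_\Sigma^\#(d\langle\psi_F,u_m\rangle)=(h/\hbar)iw_m\del_{w_m}+c.c.$ Relatedly, your fallback claim that the momentum map is ``the descent through $q$ of $\Phi_S$'' cannot be right: $\Phi_S$ is invariant under the compact torus $\T^d$ but not under $N_\C$, whose non-compact directions rescale the moduli $|z_s|$, so $\Phi_S$ does not factor through $q$ at all; the $B_F^{-1}$ correction is exactly the trace the quotient leaves behind. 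The repair is available from facts you already state --- positive definiteness of $B_V$ makes the principal block $B_F$ invertible --- but as written this step fails, and it is the step where the actual work of the theorem lies.
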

\begin{proof}
Consider the $T_\C$-orbit in $X(\Sigma)$ corresponding to a face $F$
of $\Delta$, i.e., $\Phi_\Delta^{-1}(F^\circ)$.  In assuming that
$F$ is labeled by $\{1,2,\ldots,s\}$ where $s<n$, no generality is
lost. Then this orbit is contained in the open set $\mathcal
U_V/N_\C$ where $V$ is the vertex labeled by $\{1,2,\ldots,n\}$ and
is characterized in the coordinates by the inequality $w_1\ldots
w_s\not=0$ together with the equations $w_{s+1}=0,\ldots,w_n=0$.  It
thus suffices to define the momentum map for the action of
$T_F:=\exp(\mathrm{span}\{u_{s+1},\ldots,u_{n}\})$ on this leaf in
local coordinates.

Let $\eta_1,...,\eta_n$ be the frame for $\t^*$ dual to the frame
$u_1,\ldots,u_n$ for $\t$.  Let $B_F$ denote the principal $s\times
s$ block of the matrix $B_V$ and let $b_F$ denote its inverse, this
exists because $B_V$ is positive definite and therefore each of its
principal minors is non-zero.  Define $\psi_F\colon
\Phi_\Delta^{-1}(F^\circ)\to \t^*$ by
\begin{equation}
\psi_F(w_1,\ldots,w_s)=(h/\hbar)\sum_{k,\ell=1}^s b_{k
\ell}\log|w_\ell|^2\eta_k.
\end{equation}
Then $d\langle \psi_F,u_m\rangle = \sum_{\ell=1}^s
b_{m\ell}d\log|w_\ell|^2$ and thus
\begin{eqnarray*}
\Pi_\Sigma^\#(d\langle \psi_F,u_m\rangle) & = &
(h/\hbar)\sum_{p,q=1}^s iB_{pq}\wbar_pw_q \del_{\wbar_p}\wedge
\del_q^\#\left(\sum_{\ell=1}b_{m\ell}\log |w_\ell|^2\right) \\
& = & (h/\hbar)\sum_{p,q=1}^n iB_{pq}b_{mp}w_q\del_{w_q}+c.c. \\
& = & (h/\hbar)iw_m\del_{w_m}+c.c.
\end{eqnarray*}
This shows that the action of $T_F$ on $\Phi_\Delta^{-1}(F^\circ)$
is Hamiltonian.  In particular, the action of $T$ on the open
symplectic leaf is Hamiltonian, but clearly it does not extend to
all of $X(\Sigma)$.  This completes the proof.
\end{proof}


\section{The modular class and $H^1(X(\Sigma),\Pi_\Sigma)$\label{TheModularClass}}

If $(M,\pi_M)$ is a Poisson manifold, then $d_\pi\colon \mathcal
V^k(M)\to \mathcal V^{k+1}(M)$ defined by $d_{\pi_M}(W)=[W,\pi_M]$
for each $W\in \mathcal V^k(M)$ is a differential, where
$[\cdot,\cdot]$ is the Schouten bracket of multi-vector fields.  The
algebra of multi-vector fields $\mathcal V(M)$ together with
$d_{\pi_M}$ forms a complex.  The homology $H^\bullet (M,\pi_M)$ of
this complex is called the Poisson cohomology of $M$.  The anchor
map $\pi^\#\colon T^*M\to TM$ induces a chain map from the de Rham
complex to the Poisson complex and therefore induces a map
$\pi^\#\colon H_{DR}^\bullet(M)\to H^\bullet(M,\pi_M)$. When $\pi_M$
is non-degenerate on $M$, the induced map is an isomorphism.  So for
symplectic manifolds, the Poisson cohomology is finite dimensional.
At the other extreme, every manifold admits the zero poisson
structure $\pi_M=0$ and in this case $H^\bullet(M,\pi_M)=\mathcal
V^\bullet (M)$ which is infinite dimensional.  For structures in
between these two extremes, the Poisson cohomology can be quite
difficult to compute because of the lack of a powerful method for
computation.

Every Poisson manifold $(M,\pi_M)$ has a distinguished class in the
first Poisson cohomology called the modular class (cf.
\cite{Weinstein}).  A vector field representing this class may be
computed as follows. Choose a smooth positive density $\mu$ on $M$.
For each smooth function $f$, the Lie derivative of $\mu$ with
respect to the Hamiltonian vector field $\pi^\#(df)$ is again a
density and thus can be expressed uniquely as $\theta_\mu(f)\mu$
where $\theta_\mu(f)$ is a smooth function. Due to the skew-symmetry
of $\pi_M$, the assignment $f\mapsto \theta_\mu(f)$ turns out to be
a derivation on smooth functions and hence defines a vector field
$\theta_\mu$ called the modular vector field of $\pi_M$ associated
with the density $\mu$. If $a$ is a positive function on $M$, then
$a\mu$ is a positive density and
$\theta_{a\mu}=\theta_\mu-\pi^\#(d\log a)$. In the case that
$(M,\pi_M)$ is a non-degenerate Poisson manifold and
$\mu=\frac{1}{n!}\pi_M^{-n}$ is the Liouville volume form of the
symplectic structure, $\theta_\mu=0$.

\begin{theorem}\label{modular_class_nonzero}
Let $(M,\pi_M)$ be an orientable Poisson manifold with the property
that $\pi_M$ is non-degenerate on a open dense subset of $M$ but is
not regular. Then the modular class of $(M,\pi_M)$ is non-trivial.
\end{theorem}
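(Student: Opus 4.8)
The plan is to argue by contradiction. Suppose the modular class of $(M,\pi_M)$ were trivial. Then for any choice of positive density $\mu$, the modular vector field $\theta_\mu$ would be a coboundary in the Poisson complex, meaning $\theta_\mu = \pi_M^\#(d f) = d_{\pi_M}(f)$ for some smooth function $f$ on $M$. But the excerpt tells us that $\theta_{a\mu} = \theta_\mu - \pi_M^\#(d\log a)$ for any positive function $a$; taking $a = e^{-f}$ (so $\log a = -f$) gives a positive density $\mu' = a\mu$ with $\theta_{\mu'} = \theta_\mu - \pi_M^\#(d f) = 0$. So the assumption of triviality is equivalent to the existence of a positive density whose modular vector field vanishes identically.

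The key step is then to show that no such density can exist, and this is where non-regularity enters. On the open dense set $U$ where $\pi_M$ is non-degenerate, the restriction $\pi_M|_U$ is symplectic of some rank $2n$, and its Liouville volume $\tfrac{1}{n!}\pi_M^{-n}$ is a density on $U$ with vanishing modular vector field (as recalled in the excerpt). If $\mu'$ is any globally defined positive density on $M$ with $\theta_{\mu'} = 0$, then on $U$ the two densities $\mu'$ and the Liouville volume must agree up to a locally constant positive factor, because their ratio $a$ satisfies $\pi_M^\#(d\log a) = 0$ and $\pi_M|_U$ is non-degenerate, forcing $\log a$ to be constant on the connected components of $U$. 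The plan is to derive a contradiction by examining the behavior of the Liouville volume near the locus $M\setminus U$ where $\pi_M$ drops rank.

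First I would fix a point $x_0$ in the boundary of $U$ where the rank of $\pi_M$ drops — such a point exists precisely because $\pi_M$ is not regular, so the rank is not constant, and by density of $U$ this jump occurs on the frontier of $U$. Working in a local chart around $x_0$, I would compare a smooth nonvanishing top power of $\pi_M$ against the smooth density $\mu'$. The top wedge power $\tfrac{1}{n!}\pi_M^n$ is a smooth multivector field on all of $M$ that vanishes exactly on $M\setminus U$; Corollary \ref{TopWedgePowerPiSigma} in the toric case, for example, exhibits it with a polynomial factor vanishing on the degeneracy locus. The Liouville volume is essentially the reciprocal of this quantity, so it blows up as one approaches $M\setminus U$. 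The contradiction arises because a globally smooth positive density $\mu'$ cannot agree up to a locally constant factor with a density that diverges at the frontier of $U$: the local constancy of $a = \mu'/\mu_{\mathrm{Liouville}}$ forces $\mu'$ itself to blow up near $x_0$, contradicting its smoothness (indeed positivity and finiteness) at $x_0$.

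The main obstacle will be making the blow-up argument rigorous without a global formula for $\pi_M^{-n}$, since the Liouville volume is only defined on $U$ and the comparison must be controlled uniformly as $x \to x_0$. The cleanest route is probably to phrase everything in terms of the smooth top multivector $\tfrac{1}{n!}\pi_M^n$ rather than its inverse: pairing $\mu'$ against $\tfrac{1}{n!}\pi_M^n$ yields a smooth function $g$ on $M$ (the "determinant" of $\pi_M$ relative to $\mu'$), and the condition $\theta_{\mu'} = 0$ translates into $g$ being a Casimir, hence constant on each symplectic leaf — in particular constant on the dense leaf $U$. But $g$ vanishes on $M\setminus U$ by smoothness and continuity, while being a nonzero constant on $U$, which is absurd since $U$ is dense. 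I expect this reformulation to be the crux, and I would spend the most care verifying that $\theta_{\mu'} = 0$ indeed forces $g$ to be a Casimir function, which should follow from unwinding the defining relation $\mathcal{L}_{\pi_M^\#(df)}\mu' = \theta_{\mu'}(f)\,\mu'$ together with the compatibility of $\mathcal{L}$ with the pairing of $\mu'$ against $\pi_M^n$.
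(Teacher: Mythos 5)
Your proposal is correct, but it extracts the contradiction by a different mechanism than the paper. The paper argues directly with an arbitrary volume form $\mu$ (this is where orientability is used): writing $\frac{1}{n!}\pi_M^n = a\hat{\mu}$ with $a$ smooth and vanishing exactly on $M\setminus U$, it shows $\theta_\mu = -\pi_M^\#(d\log a)$ on $U$, and concludes that $\theta_\mu$ has no global Hamiltonian primitive because any primitive on $U$ differs from $-\log a$ by a locally constant function, and $\log a$ fails to extend to the frontier of $U$ where $a=0$. Your version instead first reduces triviality of the class to the existence of a positive density $\mu'$ with $\theta_{\mu'}=0$, and then replaces the non-extendability analysis with the identity $X_f\langle \mu',\textstyle{\frac{1}{n!}}\pi_M^n\rangle = \langle \mathcal{L}_{X_f}\mu',\textstyle{\frac{1}{n!}}\pi_M^n\rangle + \langle \mu',\textstyle{\frac{1}{n!}}\mathcal{L}_{X_f}\pi_M^n\rangle = \theta_{\mu'}(f)\,g$ (the second term vanishing since Hamiltonian flows preserve $\pi_M$), which shows that $g=\langle\mu',\frac{1}{n!}\pi_M^n\rangle$ is a Casimir: it is then a nonzero locally constant function on the dense open set $U$ yet vanishes on the nonempty closed set $M\setminus U$, contradicting continuity. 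The two proofs hinge on the same underlying function (your $g$ is the paper's $a$ rescaled by the positive factor relating $\mu'$ to $\mu$), but yours trades the analysis of logarithmic divergence at the degeneracy locus for an algebraic identity plus a bare continuity argument, which is arguably cleaner and more robust. Two details to tidy in a final write-up: with the relation $\theta_{a\mu}=\theta_\mu-\pi_M^\#(d\log a)$ as you quoted it, you want $a=e^{f}$ rather than $e^{-f}$ (or else adjust the sign convention for $d_{\pi_M}$ on functions); and $U$ need not be connected or a single leaf, so the contradiction should be run on one connected component $U_0$ of $U$ --- on a connected component of $M$ where the rank actually drops, $U_0$ has a frontier point lying in $M\setminus U$, and there the nonzero constant value of $g$ on $U_0$ would have to equal $0$.
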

\begin{proof}
Suppose $\dim M=2n$ and let $\mu$ be a nowhere vanishing $2n$-form
on $M$.  Let $\hat{\mu}$ be the nowhere vanishing $2n$-vector field
dual to $\mu$ in the sense that $\langle \mu,\hat{\mu}\rangle=1$ at
each point of $M$. Let $U$ be the union of the open symplectic
leaves of $\pi_M$. Then $\frac{1}{n!}\pi_M^n=a\hat{\mu}$ where $a\in
C^\infty(M)$ is a function vanishing on the complement of $U$. On
the open symplectic leaves, $a\hat{\mu}$ is dual to the $2n$-form
$\frac{1}{a}\mu$ which is the Liouville volume form of $\pi_M$ on
the open symplectic leaves. Thus
\begin{eqnarray*}
\theta_{\frac{1}{a}\mu} & = & \theta_\mu-\pi_M^\#(d\log(1/a))\\
 & = & \theta_\mu+\pi_M^\#(d\log(a))
\end{eqnarray*}
on $U$ which implies that $\theta_\mu=-\pi_M^\#(d\log(a))$ on $U$ as
$\theta_{\frac{1}{a}\mu}=0$. Any other primitive of $\theta_\mu$ on
$U$ must differ from $d\log(a)$ by a locally constant function and
none of these extend to the boundary of $U$ as $a=0$ there.  Thus,
$[\theta_\mu]\not=0$ in $H^1(M,\pi_M)$.
\end{proof}
\begin{corollary}
The modular class of $(X(\Sigma),\Pi_\Sigma)$ is non-zero.
\end{corollary}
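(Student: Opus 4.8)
The plan is to recognize this as a direct application of Theorem \ref{modular_class_nonzero}, so the work consists entirely in verifying its three hypotheses for $(X(\Sigma),\Pi_\Sigma)$: orientability, non-degeneracy on an open dense set, and failure of regularity. None of these should require genuinely new arguments, as each follows from structural facts already assembled in this and the previous section.

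First I would dispense with orientability. Since $X(\Sigma)$ is a complex manifold, indeed a smooth projective variety of complex dimension $n$, its underlying real $2n$-manifold carries the canonical orientation induced by the complex structure, so it is orientable. In particular $\dim_\R X(\Sigma)=2n$, matching the hypothesis of the theorem.

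Next I would establish non-degeneracy on a dense open set. By the theorem identifying the symplectic leaves of $\Pi_\Sigma$ with the $T_\C$-orbits (equivalently with the faces of $\Delta$), the open dense orbit, corresponding to the interior of $\Delta$, i.e. the zero cone of $\Sigma$, is a leaf of full real dimension $2n$ on which $\Pi_\Sigma$ restricts to a symplectic form. Concretely, Corollary \ref{TopWedgePowerPiSigma} exhibits in any vertex chart the formula $\frac{1}{n!}\Pi_\Sigma^n = i^n\det(B_V)|w_1|^2\cdots|w_n|^2\,\del_{\wbar_1}\wedge\del_{w_1}\wedge\cdots\wedge\del_{\wbar_n}\wedge\del_{w_n}$, and since $B_V$ is positive definite the scalar $\det(B_V)$ is nonzero, so this top power is nonzero precisely on the locus $w_1\cdots w_n\neq 0$, which is exactly the open orbit. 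This confirms that $\Pi_\Sigma$ is non-degenerate on the dense open orbit and vanishes on its complement, matching the function $a$ appearing in the proof of Theorem \ref{modular_class_nonzero}.

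Finally I would observe that $\Pi_\Sigma$ is not regular. The vertices of $\Delta$ (the $n$-dimensional cones of $\Sigma$) correspond to $T_\C$-fixed points, which are zero-dimensional symplectic leaves, whereas the open orbit is a $2n$-dimensional leaf. Since leaves of distinct dimensions occur, the rank of $\Pi_\Sigma$ is not locally constant and the structure fails to be regular. With all three hypotheses verified, Theorem \ref{modular_class_nonzero} applies directly and yields that the modular class of $(X(\Sigma),\Pi_\Sigma)$ is non-trivial. I expect no serious obstacle here; the only point meriting care is making the non-degeneracy on the open orbit explicit rather than merely asserting it, and Corollary \ref{TopWedgePowerPiSigma} supplies this at once.
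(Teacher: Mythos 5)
Your proposal is correct and follows essentially the same route as the paper: the paper's own proof likewise cites Theorem \ref{modular_class_nonzero}, checking orientability from the complex structure and non-regularity together with generic non-degeneracy from the identification of the symplectic leaves with the $T_\C$-orbits. The only difference is that you make the non-degeneracy on the open orbit explicit via Corollary \ref{TopWedgePowerPiSigma}, which the paper leaves implicit.
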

\begin{proof}
The toric variety $X(\Sigma)$ is a complex manifold.  Thus it is
oriented.  As the symplectic leaves of $\Pi_\Sigma$ are the
$T_\C$-orbits in $X(\Sigma)$, $\Pi_\Sigma$ is not regular and yet is
non-degenerate on an open dense set.
\end{proof}

A consequence of the proof of Theorem \ref{MomentumMaponLeaves} is
that there is a linear injection $\t+i\t\to
H^1(X(\Sigma),\Pi_\Sigma)$ because the action of $T_\C$ is Poisson
and yet fails to be Hamiltonian because the momentum map for the
action on the open symplectic leaf fails to extend.  It is natural
to ask whether the modular class is disjoint from this image and
this question is answered by the next result.  For brevity, the
following notation is introduced.

\begin{notation}
If $u_1,\ldots,u_n$ is the basis of $\Lambda$ associated to given
vertex of $\Delta$, write $R_1,\ldots,R_n$ and $D_1,\ldots,D_n$ for
the vector fields on $X(\Sigma)$ corresponding to $u_1,\ldots,u_n$
and $iu_1,\ldots,iu_n$, respectively.  In terms of the local
coordinates $w_1,\ldots,w_n$ associated to the same vertex,
$R_k=(h/\hbar)iw_k\del_{w_k}+c.c.$ and
$D_k=(h/\hbar)w_k\del_{w_k}+c.c.$ for each $k=1,2,\ldots,n$.
\end{notation}

As the action of $\T_\C$ is Poisson, the vector fields $R_k$ and
$D_k$ determine classes in $H^1(X(\Sigma),\Pi_\Sigma)$ for each $k$
and a $\R$-linear map $\t+i\t\to H^1(X(\Sigma),\Pi_\Sigma)$.

\begin{theorem}
For each vertex of $\Delta$, the associated classes
\[[R_1],\ldots,[R_n],[D_1],\ldots,[D_n],[\theta_\mu]\in
H^1(X(\Sigma),\Pi_\Sigma)\] are linearly independent.  Hence $\dim
H^1(X(\Sigma),\Pi_\Sigma)\ge 2n+1$.
\end{theorem}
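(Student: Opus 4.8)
The plan is to show that no nontrivial real linear combination
\[ X:=\sum_{k}a_kR_k+\sum_k b_kD_k+c\,\theta_\mu \]
is a global Hamiltonian vector field $\Pi_\Sigma^\#(df)$ with $f\in C^\infty(X(\Sigma))$. All three families are $d_{\Pi_\Sigma}$-cocycles: $R_k,D_k$ generate the Poisson action of $T_\C$ (Theorem \ref{MomentumMaponLeaves}), so $[R_k,\Pi_\Sigma]=[D_k,\Pi_\Sigma]=0$, and $\theta_\mu$ is a modular field. Fix a vertex $V$ with chart $w=(w_1,\dots,w_n)$; the open orbit $U_0=\{w_1\cdots w_n\neq0\}$ is where $\Pi_\Sigma$ is nondegenerate, so there $\Pi_\Sigma^\#$ is invertible and I may transport the equation $X=\Pi_\Sigma^\#(df)$ to the level of $1$-forms. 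The argument has three stages: a de Rham period computation on $U_0$ eliminates the $b_k$; the logarithmic blow-up of the would-be Hamiltonian across the toric divisors constrains the $a_k$ and $c$; and a comparison between two vertex charts forces $c=0$.

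For the first stage I would compute $\Pi_\Sigma^\#$ directly on the closed forms $\Theta_\ell:=d\,\mathrm{Arg}(w_\ell)=\tfrac{1}{2i}(dw_\ell/w_\ell-d\wbar_\ell/\wbar_\ell)$, obtaining $\Pi_\Sigma^\#(\Theta_\ell)=-\tfrac12\sum_m (B_V)_{\ell m}(w_m\del_{w_m}+c.c.)$; inverting the positive-definite $B_V$ (with inverse $b=(b_{k\ell})$) gives $D_k=\Pi_\Sigma^\#(\gamma_k)$ with $\gamma_k=-2(h/\hbar)\sum_\ell b_{k\ell}\Theta_\ell$. The analogous computation on $d\log|w_\ell|^2$ shows $R_k$ and, via Theorem \ref{modular_class_nonzero} together with Corollary \ref{TopWedgePowerPiSigma}, $\theta_\mu$ are $\Pi_\Sigma^\#$ of \emph{exact} forms on $U_0$. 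Applying $(\Pi_\Sigma^\#)^{-1}$ to $X=\Pi_\Sigma^\#(df)$ and passing to $H^1_{DR}(U_0)$ therefore kills every term except $\sum_kb_k[\gamma_k]$; since $[\Theta_1],\dots,[\Theta_n]$ form a basis of $H^1_{DR}((\C^*)^n)$ and $b$ is invertible, the $[\gamma_k]$ are independent and all $b_k=0$.

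With the $b_k$ gone, I would make the modular term explicit in the chart. Fixing a global volume $\mu$ and writing $\mu=\rho_V\mu^V_0$ with $\mu^V_0$ the Euclidean volume of the chart and $\rho_V$ smooth positive, Corollary \ref{TopWedgePowerPiSigma} gives $\tfrac1{n!}\Pi_\Sigma^n=\rho_V\det(B_V)|w_1|^2\cdots|w_n|^2\,\widehat{\mu}$, whence by Theorem \ref{modular_class_nonzero}, on $U_0$,
\[ \theta_\mu=-\Pi_\Sigma^\#(d\log\rho_V)-(\hbar/h)\sum_m\Big(\textstyle\sum_\ell (B_V)_{m\ell}\Big)R_m. \]
Substituting into $X=\Pi_\Sigma^\#(df)$ and inverting $\Pi_\Sigma^\#$ on $U_0$ shows $f+c\log\rho_V$ equals $\sum_\ell C_\ell\log|w_\ell|^2$ plus a constant, with $C_\ell=\tfrac{h}{\hbar}\sum_m[a_m-c(\hbar/h)\sum_j(B_V)_{mj}]b_{m\ell}$. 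Since $f$ and $\log\rho_V$ are smooth on the chart while $\log|w_\ell|^2\to-\infty$ along $\{w_\ell=0\}$, every $C_\ell$ must vanish; invertibility of $b$ then forces $a_m=c(\hbar/h)\sum_\ell(B_V)_{m\ell}$ for all $m$, and $f=-c\log\rho_V+\mathrm{const}$ throughout $\mathcal U_V/N_\C$.

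The hard part is the final step: a single chart cannot detect $c$, because the singularity of $\theta_\mu$ has been completely absorbed into the relation $a=c(\hbar/h)(\text{row sums of }B_V)$. To break this I would compare two distinct vertices $V\neq V'$. The identity $f=-c\log\rho_V+\mathrm{const}_V$ holds in each chart, so on the nonempty overlap $c(\log\rho_{V'}-\log\rho_V)$ is constant. But $\rho_{V'}/\rho_V$ is precisely the Euclidean Jacobian factor of the toric transition $w\mapsto w'$, which by Notation \ref{VertexCoordinatesNotation} is a Laurent monomial change of coordinates; its Jacobian is a nonconstant monomial, so $\log(\rho_{V'}/\rho_V)$ is nonconstant. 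Hence $c=0$, and then $a_m=0$ for all $m$. Combined with $b_k=0$, this proves the $2n+1$ classes are linearly independent, giving $\dim_\R H^1(X(\Sigma),\Pi_\Sigma)\ge2n+1$.
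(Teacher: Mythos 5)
Your stages 1 and 2 are sound and reproduce, in different packaging, the computations in the paper's proof: the period argument on the open orbit kills the $b_k$, and your chart formula for $\theta_\mu$ is exactly the paper's computation of the modular field of the chart volume plus the correction $-\Pi_\Sigma^\#(d\log\rho_V)$. The gap is in stage 3, in the assertion that the transition between two distinct vertex charts has nonconstant Jacobian. In the paper's conventions the chart coordinates at $V$ are the characters of the primitive edge generators $\eta_1,\ldots,\eta_n\in\Lambda^*$ at $V$ (the dual basis of $u_{s_1},\ldots,u_{s_n}$), so the holomorphic Jacobian determinant of the transition to $V'$ equals $\det(M)\,\chi^{\sigma(V')-\sigma(V)}$, where $M$ is the integral change-of-basis matrix and $\sigma(V)$ denotes the sum of the primitive edge generators at $V$. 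This monomial is constant exactly when $\sigma(V)=\sigma(V')$, and that can happen for distinct vertices: take the Hirzebruch surface $\mathbb{F}_2$, i.e., the Delzant trapezoid with vertices $(0,0)$, $(m,0)$, $(m+2,1)$, $(0,1)$. The edge generators at $(0,0)$ are $(1,0),(0,1)$ and at $(m,0)$ they are $(-1,0),(2,1)$; both sum to $(1,1)$. Explicitly the transition is $(w_1,w_2)\mapsto(w_1^{-1},\,w_1^2w_2)$, with Jacobian determinant $-1$, so $\rho_{V'}/\rho_V\equiv 1$ on the overlap and your comparison gives no information for this pair. (The same example shows that $\rho_V$ need not tend to zero at every boundary point of a vertex chart.) As written, then, stage 3 does not exclude $c\neq0$.

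The gap is repairable, but note first that your guiding claim that a single chart cannot detect $c$ is mistaken; the paper's proof detects it in one chart. Since $X(\Sigma)$ is compact, the putative global Hamiltonian $f$ is bounded; on the other hand, the complement of the chart has measure zero, so $\int_{\C^n}\rho_V\,\mu_0^V=\mu(X(\Sigma))<\infty$, while $\mu_0^V(\C^n)=\infty$. Hence $\inf_{\C^n}\rho_V=0$, so $\log\rho_V$ is unbounded below on the chart, and the identity $f=-c\log\rho_V+\mathrm{const}$ forces $c=0$; then $a_m=0$ follows from your stage-2 relation. (This is the precise content of the paper's statement that $\rho_V$ ``must vanish at the boundary of the chart'': what is true, and what is needed, is that $\rho_V$ is not bounded away from zero, not that it vanishes at every boundary point.) If you prefer to keep the two-chart comparison, you must choose $V'$ with $\sigma(V')\neq\sigma(V)$: for generic $\xi\in\t$, every edge generator at the vertex maximizing $\langle\cdot,\xi\rangle$ pairs negatively with $\xi$ and every edge generator at the minimizing vertex pairs positively, so $\sigma$ is nonconstant on the vertex set and a suitable $V'$ always exists; but this choice, and the lemma behind it, is a necessary ingredient that your write-up omits.
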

\begin{proof}
Let $w_1,\ldots,w_n$ be the associated vertex coordinates.  Let
\[\lambda=i^n dw_1\wedge d\wbar_1\wedge\dots \wedge dw_n\wedge
d\wbar_n\]
and let $\frac{1}{n!}\Omega_\Sigma^n$ denote the
Liouville volume form for $\Pi_\Sigma$ on the symplectic leaf
$\{w_1\dots w_n\not=0\}$.  Then $\lambda=a
\frac{1}{n!}\Omega_\Sigma^n$ where $a=\det(B_V)|w_1|^2\dots
|w_n|^2$.

Now $d\log a=\sum_{\ell=1}^n d\log |w_\ell|^2$ and therefore
\begin{eqnarray*}
\theta_\lambda & = & -\Pi_\Sigma^\#(d\log a) \\
& = & -\sum_{\ell,p,q=1}^n iB_{pq}\wbar_p w_q
\frac{\delta_{p\ell}}{\wbar_\ell}+c.c. \\
& = & -\sum_{p,q=1}^n iB_{pq}w_q\del_{w_q}+c.c. \\
& = & -\textstyle{\frac{1}{(h/\hbar)}}\sum_{p,q=1}^n iB_{pq}R_q.
\end{eqnarray*}
So, as one would expect, $\theta_\lambda$ is Hamiltonian on the
leaf.  But more importantly, it lies the $\R$-span of the vector
fields $R_1,\ldots,R_n$ and thus defines a vector field on all of
$X(\Sigma)$.

If $\mu$ is a positive volume form on $X(\Sigma)$ then
$\mu=f\lambda$ in the chart where $f$ is a smooth positive function.
Thus $\theta_\mu=\theta_\lambda-\Pi_\Sigma^\#(d\log f)$ on $\mathcal
U_V/N_\C$.  Since $X(\Sigma)$ is compact, $f$ must vanish at the
boundary of the chart and thus every primitive of
$\theta_\mu-\theta_\lambda$ on $\mathcal U_V/N_C$ fails to extend to
all of $X(\Sigma)$.  Assume that $\mu$ is $T$-invariant (such a
volume form can be constructed by averaging an arbitrary one over
the action of $T$).  Then $\theta_\mu$ must lie in the
$C^\infty(X(\Sigma))^T$ span of $R_1,\ldots,R_n$ because
$\Pi_\Sigma$ and $\mu$ are $T$-invariant. But as
$\theta_\mu-\theta_\lambda$ is not Hamiltonian, it follows that the
modular class $[\theta_\mu]$ is independent of the classes
$[R_1],\ldots,[R_n]$ and $[D_1],\ldots,[D_n]$.
\end{proof}


\section{Poisson cohomology of $(\CP^1,\Pi_\Sigma)$.\label{PoissonCohomforCP1}}

Let $D=z\del_z+c.c.$ and $R=iz\del_z+c.c.$ be the dilation and
rotation vector fields, respectively, on $\C$.  Then
$\Pi=i|z|^2\del_{\zbar}\wedge \del_z=\frac{1}{2}D\wedge R$ in
dimension $d=1$. The restriction of $\Pi$ to $\C_0$ is symplectic
and thus $H^\bullet(\C_0,\Pi)=H^\bullet_{DR}(\C_0)$ as a graded
vector space. On all of $\C$, however, the Poisson cohomology is
more complicated. This was worked out first in \cite{Nakanishi}.

\begin{theorem}[Nakanishi]
The Poisson cohomology of $(\C,\Pi)$ is finite dimensional.  In
terms of generators,
\begin{enumerate}
\item[i)] $H^0(\C,\Pi)=\R[1]$,
\item[ii)] $H^1(\C,\Pi)=\R[D] + \R[R]$, and
\item[iii)] $H^2(\C,\Pi)=\R[\pi] + \R[\Pi]$.
\end{enumerate}
\end{theorem}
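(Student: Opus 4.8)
The plan is to exploit the rotational symmetry to reduce the three-term complex $C^\infty(\C)\to\mathcal V^1(\C)\to\mathcal V^2(\C)$ to a family of elementary ordinary differential equations in the radial variable $\rho=|z|^2$. Since $\Pi=i|z|^2\del_{\zbar}\wedge\del_z$ is invariant under the rotation action $z\mapsto e^{i\theta}z$, the Poisson differential $d_\Pi=[\,\cdot\,,\Pi]$ commutes with the induced $\T$-action and hence preserves the Fourier grading by angular weight $k$, a multivector transforming by $e^{ik\theta}$ having weight $k$. First I would decompose each term into its weight-$k$ isotypic pieces — a weight-$k$ function being of the form $z^k\phi(\rho)$ (or $\zbar^{|k|}\phi(\rho)$ for $k<0$) with $\phi$ smooth on $[0,\infty)$, and similarly for vector and bivector fields — and argue that the cohomology splits as $\bigoplus_k H^\bullet_{(k)}$. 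The geometric reason all the interesting classes live in weight $k=0$ is that, away from the origin, $\exp_*\pi=\Pi$ identifies $(\C_0,\Pi)$ with a translation-invariant symplectic cylinder, so $H^\bullet(\C_0,\Pi)\cong H^\bullet_{DR}(\C_0)$ and every class is rotation-invariant; the entire departure of $H^\bullet(\C,\Pi)$ from this answer comes from the failure of primitives to extend across $0$.

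The computation in weight $k=0$ is then direct and produces exactly the asserted generators. An invariant function is $\phi(\rho)$, an invariant real vector field is $\alpha(\rho)D+\beta(\rho)R$, and an invariant bivector is $h(\rho)\,\pi$ with $\Pi=\rho\,\pi$. A short calculation gives $d_\Pi\phi=\pm\rho\phi'(\rho)R$ and, using that $D$ and $R$ are Poisson vector fields ($[D,\Pi]=[R,\Pi]=0$, since $\Pi$ has both dilation- and rotation-weight zero), $d_\Pi(\alpha D+\beta R)=\pm2\rho^2\alpha'(\rho)\,\pi$. Reading off kernels and images: the only invariant Casimirs are constants, so $H^0=\R[1]$; the cocycles are $cD+\beta(\rho)R$ while the coboundaries are $\{g(\rho)R:g(0)=0\}$, so $[\beta R]=[\beta(0)R]$ and $H^1_{(0)}=\R[D]\oplus\R[R]$, the class $[R]$ surviving precisely because its primitive $\log|z|^2$ does not extend to the origin; finally the image of $d_\Pi$ in degree two is $\{h(\rho)\pi:h(0)=h'(0)=0\}=\rho^2C^\infty\cdot\pi$, whence $H^2_{(0)}\cong C^\infty/\rho^2C^\infty=\R[\pi]\oplus\R[\Pi]$.

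It then remains to show the weight-$k$ subcomplex is acyclic for every $k\ne0$. Here $H^0_{(k)}=0$ is immediate, since a nonconstant locally constant function cannot carry nonzero angular weight, and $H^1_{(k)}=H^2_{(k)}=0$ I would obtain by solving the resulting first-order radial ODEs, with the monomial prefactor $z^{|k|}$ absorbing the boundary behaviour at $\rho=0$. The point is that for $k\ne0$ the indicial condition at the origin no longer obstructs the construction of a smooth primitive, so the finite-dimensional contributions seen in weight $0$ disappear.

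The main obstacle is analytic rather than algebraic: it is the control of smoothness at the origin, uniformly across all Fourier modes. Two things must be checked carefully. First, that the mode decomposition genuinely passes to cohomology — that a smooth cocycle which is a coboundary in each weight separately admits a single smooth primitive obtained by summing the mode-by-mode primitives, with the resulting Fourier series converging in $C^\infty$. Second, that the radial primitives produced for $k\ne0$ depend tamely enough on $k$ (no loss of derivatives, controlled growth) for this summation to close. Organizing the $k\ne0$ estimates so that they assemble into genuinely smooth global multivector fields, rather than merely formal power series at $0$, is the technical heart of the argument, and it is exactly the step that distinguishes the Poisson cohomology of the filled plane from the purely topological answer on the punctured plane $\C_0$.
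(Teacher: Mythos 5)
First, a point of comparison that matters for this review: the paper does \emph{not} prove this statement. It is quoted as Nakanishi's theorem, with the proof living in the cited reference \cite{Nakanishi}; the paper only uses the conclusion (in the Mayer--Vietoris computation for $\CP^1$). So your proposal cannot be matched against an argument in the paper and must stand on its own. On its own terms, your plan is sound and everything you actually compute is correct. The Fourier decomposition by angular weight is legitimate ($d_\Pi$ commutes with the $\T$-action, hence preserves weights), and your weight-zero complex is right: with $\rho=|z|^2$, invariant functions, fields, and bivectors are $\phi(\rho)$, $\alpha(\rho)D+\beta(\rho)R$, $h(\rho)\pi$; one checks $d_\Pi\phi=\pm\rho\phi'(\rho)R$ and $d_\Pi(\alpha D+\beta R)=\pm 2\rho^2\alpha'(\rho)\pi$, so the invariant cohomology is $\R[1]$, $\R[D]\oplus\R[R]$, and $C^\infty/\rho^2C^\infty\cong\R[\pi]\oplus\R[\Pi]$, exactly the asserted generators. (One small unacknowledged ingredient: the statement that smooth weight-$k$ tensors have smooth radial profiles, e.g.\ that an invariant vector field is $\alpha(\rho)D+\beta(\rho)R$ with $\alpha,\beta$ smooth on $[0,\infty)$, is a Whitney/Schwarz-type division fact and should be proved or cited.)

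The genuine gap is that the two pillars carrying the theorem are asserted rather than proved, and you say so yourself. Both do close, so no step would fail, but as written this is a verified answer plus a strategy, not a proof. (i) Acyclicity in weight $k\neq 0$: writing a weight-$k$ field ($k\geq 1$) as $V=z^{k+1}a(\rho)\del_z+z^{k-1}c(\rho)\del_{\zbar}$, the cocycle condition $[V,\Pi]=0$ reads $c-\rho c'=k\rho a+\rho^2 a'$, which forces $c(0)=0$; then $g=\frac{i}{k}\,z^k\bigl(c(\rho)+\rho a(\rho)\bigr)/\rho$ is a \emph{smooth} weight-$k$ primitive --- the division by $k$ is precisely where $k\neq 0$ enters, and in degree $2$ the lone formal obstruction (the coefficient of $\rho$ in the Taylor expansion of the profile $h$) is removed by the constant choice $a_0=-h_1/k$, after which $c-\rho c'=\tilde h$ is smoothly solvable by $c=\tilde h_0+\rho^2 n$, $n(\rho)=-\frac{1}{\rho}\int_0^\rho m(s)\,ds$. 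You should exhibit these solutions rather than gesture at "radial ODEs," since the indicial analysis at $\rho=0$ is exactly where the theorem's content sits. (ii) Summability: the primitives above are produced from $W_k$ by operators --- division by $k$, differentiation/multiplication by fixed smooth functions, and the averaging operator $h\mapsto\frac{1}{\rho}\int_0^\rho h$ --- each bounded on every $C^m$-seminorm over compacta, uniformly in $k$. Hence the rapid decay of $\|W_k\|_{C^m(K)}$ (automatic for smooth $W$) is inherited by the $V_k$, the series $\sum_{k\neq 0}V_k$ converges in $C^\infty$, and $d_\Pi$ passes through the limit; together with the easy injectivity half (average a primitive of an invariant coboundary), this yields $H^\bullet(\C,\Pi)\cong H^\bullet_{(0)}$. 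Until (i) and (ii) are written out with these estimates, the proposal remains an outline whose hardest parts are deferred.
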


Using these results, one can compute the Poisson cohomology of the
simplest toric variety via a Mayer-Vietoris argument.  Let $T$ be a
group isomorphic to $\T$.  The convex hull of any two distinct
points in $\t^*$ is then a Delzant polytope.  Let $\eta_1$ be a
generator of $\Lambda^*$, let $u_1$ denote the dual generator of
$\Lambda$, and let $u_2=-u_1$.  Let $\Delta$ be the convex hull of
$\{-\frac{1}{2}\eta_1,\frac{1}{2}\eta_1\}$ in $\t^*$.  The facets of
$\Delta$ are the two vertices of $\Delta$, since $\Delta$ has
dimension 1. Label $-\frac{1}{2}\eta_1$ as facet 1 and the other
vertex as facet 2 corresponding to the enumeration of the generators
of $\Sigma^{(1)}$, $u_1,u_2$. Then $N_\C$ is the diagonal copy of
$\T_\C$ in $\T_\C^2$. Every subset of $\{1,2\}$ labels a face of
$\Delta$ except the set $\{1,2\}$ itself. Thus, $\mathcal
U_\Sigma=\C^2\setminus \{(0,0)\}$ and the quotient
$X(\Sigma)=\mathcal U_\Sigma/N_\C$ is $\CP^1$.

Let $D_1$ denote the vector field corresponding to $iu_1\in \t_\C$.
Then $D_1$ and $R_1$ are commuting vector fields on $\CP^1$ which
vanish only at the fixed points of the $\T_\C$ action, i.e., points
$\Phi_\Delta^{-1}(V)$ for each vertex $V$ of $\Delta$.

\begin{theorem}\label{Poisson_Cohomology_CP1}
The Poisson cohomology of $(\CP^1,\Pi_\Sigma)$ is finite
dimensional. As real vector spaces
\begin{enumerate}
\item[i)] $H^0(\CP^1,\Pi_\Sigma)=\R$,
\item[ii)] $H^1(\CP^1,\Pi_\Sigma)=\R^3$, and
\item[iii)] $H^2(\CP^1,\Pi_\Sigma)=\R^4$.
\end{enumerate}
\end{theorem}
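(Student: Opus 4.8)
The plan is to cover $\CP^1$ by its two vertex charts and feed Nakanishi's theorem into a Mayer--Vietoris long exact sequence for Poisson cohomology. Write $U_1$ and $U_2$ for the two open sets $\mathcal U_V/N_\C$ associated to the two vertices of $\Delta$; each is biholomorphic to $\C$, and by Theorem~\ref{local_representation} the structure $\Pi_\Sigma$ equals $2i|w|^2\del_{\wbar}\wedge\del_w$ in the corresponding coordinate $w$, since here $n=1$, $d=2$, and (using $u_2=-u_1$) each vertex gives $A_V=(1)$, so $B_V=1+A_V(A_V)^t=2$. A positive constant rescaling of a Poisson bivector induces an isomorphism of Lichnerowicz complexes (the map $c^k\cdot\mathrm{id}$ on $k$-vector fields intertwines $d_\pi$ with $d_{c\pi}$), so $H^\bullet(U_j,\Pi_\Sigma)\cong H^\bullet(\C,\Pi)$ is computed by Nakanishi's theorem. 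The overlap $U_1\cap U_2$ is the open $\T_\C$-leaf $\C_0=\C\setminus\{0\}$, on which $\Pi_\Sigma$ is non-degenerate; hence $H^\bullet(U_1\cap U_2,\Pi_\Sigma)\cong H^\bullet_{DR}(\C_0)$, which is $\R,\R,0$ in degrees $0,1,2$.

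Next I would set up the Mayer--Vietoris sequence. The restriction maps assemble into a sequence of complexes
\[0\to \V^\bullet(\CP^1)\to \V^\bullet(U_1)\oplus\V^\bullet(U_2)\to\V^\bullet(U_1\cap U_2)\to 0,\]
where the last arrow is $(W_1,W_2)\mapsto W_1-W_2$. Exactness on the left and in the middle is immediate from the fact that $U_1,U_2$ cover $\CP^1$; surjectivity on the right is the one point needing care, and follows by the usual partition-of-unity construction (multiply a multivector field on the overlap by a bump function supported in one chart and extend by zero). Because the Schouten bracket with $\Pi_\Sigma$ is local, restriction commutes with $d_{\Pi_\Sigma}$, so these are chain maps and the snake lemma yields a long exact sequence relating $H^\bullet(\CP^1,\Pi_\Sigma)$ to the three groups of step one.

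I would then compute the maps. In degree $0$ the map $\R\oplus\R\to\R$ is $(c_1,c_2)\mapsto c_1-c_2$, surjective with kernel the diagonal, giving $H^0=\R$ and forcing the connecting map $\delta^0=0$. The heart of the argument is degree $1$: by Nakanishi $H^1(U_j)=\R[D]+\R[R]$, and I must restrict these generators to $H^1(U_1\cap U_2)\cong H^1_{DR}(\C_0)\cong\R$. On the open leaf the rotation field $R$ is Hamiltonian (with Hamiltonian $\tfrac{1}{\hbar}\log|w|^2$, exactly as in the proof of Theorem~\ref{MomentumMaponLeaves}), so $[R]$ restricts to $0$; the dilation field $D$ is not Hamiltonian, $\omega^\#(D)$ being a generator of $H^1_{DR}(\C_0)$, so $[D]$ restricts to a generator. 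The transition $w'=1/w$ between charts gives $D_{U_2}=-D_{U_1}$ on the overlap, so the two dilation classes restrict to $\pm$ the same generator; either way the degree-$1$ restriction map $\R^4\to\R$ has image spanned by a single generator, hence rank $1$ and $3$-dimensional kernel. Since $\delta^0=0$, the sequence gives $H^1(\CP^1,\Pi_\Sigma)\cong\ker=\R^3$.

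Finally, surjectivity in degree $1$ forces $\delta^1=0$, and since $H^2(U_1\cap U_2)=H^2_{DR}(\C_0)=0$ the degree-$2$ restriction map is trivially onto; exactness then identifies $H^2(\CP^1,\Pi_\Sigma)$ with $H^2(U_1)\oplus H^2(U_2)=(\R[\pi]+\R[\Pi])^{\oplus 2}=\R^4$. As a consistency check, the alternating sum of dimensions along the exact sequence must vanish, which reads $a_0-a_1+a_2=2$ and is satisfied by $(1,3,4)$. I expect the main obstacle to be precisely the degree-$1$ computation: verifying that $[R]$ dies on the overlap while $[D]$ survives (together with correctly tracking the behavior under the transition map), since these facts are exactly what distinguish $\dim H^1=3$ from the naive count of $4$, and the Euler-characteristic bookkeeping alone cannot separate the two.
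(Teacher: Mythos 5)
Your proposal is correct and follows essentially the same route as the paper: a Mayer--Vietoris argument over the two vertex charts, with Nakanishi's theorem supplying the chart cohomology and the de Rham cohomology of the open leaf $\C_0$ supplying the overlap. Your degree-one computation (restriction sends $[R]\mapsto 0$ and $[D]\mapsto$ a generator, so the restriction map $\R^4\to\R$ is surjective with $3$-dimensional kernel) is the same key fact the paper packages as vanishing of the connecting homomorphism, via the observation that $\Pi_\Sigma^\#(d\mathrm{Arg}(w))$ is a multiple of the globally defined vector field $D_1$.
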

\begin{proof}
The proof applies the Mayer-Vietoris sequence for Poisson cohomology
to the affine open cover of $\CP^1$ by the open sets $U=\mathcal
U_{\{1\}}/N_\C$ and $V=\mathcal U_{\{2\}}/N_\C$ associated to the
vertices of $\Delta$.  Let $w$ denote the coordinate in the first
chart and $w'$ the coordinate in the second chart.  In terms of $w$,
$\Pi_\Sigma=2(h/\hbar)i|w|^2\del_{\wbar}\wedge \del_w$ and the
expression is identical in terms of $w'$ where $w'=1/w$ on the
overlap.  The intersection $U\cap V$ is the open symplectic leaf of
$\Pi_\Sigma$, thus $H^\bullet(U\cap V,\Pi_\Sigma)\simeq
H^\bullet_{DR}(U\cap V)$.  As $\Pi_\Sigma$ is non-degenerate on an
open dense set the anchor map induces an isomorphism
$H^0(\CP^1,\Pi_\Sigma)\simeq H^0_{DR}(\CP^1)$.  Thus, the initial
row of the Mayer-Vietoris sequence is exact and the interesting part
begins with $H^1(\CP^1,\Pi_\Sigma)$.

By the theorems of Nakanishi the terms $H^\bullet(U,\Pi_\Sigma)$ are
known.  The isomorphism $H^\bullet(U\cap V,\Pi_\Sigma)\simeq
H^\bullet_{DR}(\T_\C)\simeq H^\bullet_{DR}(S^1)=0$ gives that
$H^1(U\cap V,\Pi_\Sigma)=\R$ and $H^2(U\cap V,\Pi_\Sigma)=0$. Hence,
the interesting part of the Mayer-Vietoris sequence takes the form
\[
0\to H^1(\CP^1,\Pi_\Sigma) \to \R^2\oplus \R^2 \to \R
\xrightarrow{\delta} H^2(\CP^1,\Pi_\Sigma)\to \R^2\oplus \R^2 \to 0
\]
where $\delta\colon H^1(U\cap V,\Pi_\Sigma)\to
H^2(\CP^1,\Pi_\Sigma)$ is the connecting homomorphism.  From this
exact sequence, it is clear that the Poisson cohomology of
$(\CP^1,\Pi_\Sigma)$ is finite dimensional as $\dim H^1+1=\dim H^2$
and $\dim H^1\le 4$. A generator of $H^1_{DR}(U\cap V)$ is
$d\mathrm{Arg}(w)$, and under the anchor map
$\Pi_\Sigma^\#(d\mathrm{Arg}(z))=\frac{2}{(\hbar/h)}D_1$ which
extends globally to all of $\CP^1$.  Therefore, the connecting
homomorphism $\delta \colon H^1(U\cap V,\Pi_\Sigma)\to
H^2(\CP^1,\Pi_\Sigma)$ is the zero map, so
$H^1(\CP^1,\Pi_\Sigma)\simeq \R^3$ and $H^2(\CP^1,\Pi_\Sigma)=\R^4$.
The proof is complete.
\end{proof}

It is interesting to note that the $T$-invariant generators of
$H^2(\CP^1,\Pi_\Sigma)$ are not all algebraic even though
$\Pi_\Sigma$ is real algebraic.  The space of $T$-invariant smooth
algebraic bi-vector fields on $\CP^1$ is only 3-dimensional as each
is of the form
\[i(a+b|w|^2+c|w|^4)\del_{\wbar}\wedge \del_{w}\]
expressed in the coordinate associated to vertex 1 where
$a,b,c\in\R$. The bi-vectors $i\del_{\wbar}\wedge\del_w$,
$i|w|^2\del_{\wbar}\wedge \del_w$, and $i|w|^4\del_{\wbar}\wedge
\del_w$, which vanish at fixed point $2$ but not $1$, vanish at $1$
and $2$, and vanish $1$ but not $2$, respectively, form a basis for
this space. In their span, with $a=1$, $b=2$, and $c=1$ is the
non-degenerate Poisson structure
$\pi_\Delta=i(1+|w|^2)^2\del_{\wbar}\wedge\del_w$ arising the
Delzant construction.  A fourth independent generator is the real
analytic bi-vector $e^{-|w|^2}\del_{\wbar}\wedge\del_w$ which
vanishes only at vertex $2$ but with infinite order.


\section{The modular vector field on $(\CP^n,\Pi_\Sigma)$.\label{ModVectorFieldonCPn}}

Let $T$ be a Lie group isomorphic to $\T^n$ and let $u_1,\ldots,u_n$
be a basis $\Lambda\subset \t$ and set $u_{n+1}=-\sum_{\ell=1}^n
u_\ell$.  Let $\eta_1,\ldots,\eta_n$ be the dual basis of
$\Lambda^*\subset \t^*$.  Let $\Delta'$ be the convex hull of the
points $\{0,\eta_1,\ldots,\eta_n\}$ in $\t^*$. Then $\Delta'$ is a
standard $n$-simplex and a Delzant polytope and its dual fan
$\Sigma$ is the data needed to construct $\CP^n$ as a toric variety.
Every other polytope whose dual fan in $\Sigma$ differs from
$\Delta'$ only by a translation or overall change of scale.  The
translate $\Delta=\Delta'-\nu$ where
$\nu=\frac{1}{n+1}(\eta_1+\ldots+\eta_n)$ is a Delzant polytope
containing the origin as an interior point.  The K\"ahler form
$\omega_\Delta$ it determines on $X(\Sigma)=\CP^n$ is the one
relevant for the discussion in this section. Define the map $p\colon
(h\Z)^d\to \Lambda$ by the assignments $he_\ell\mapsto u_\ell$, for
each $\ell=1,2,\ldots,n+1$. The kernel
$\n=\mathrm{span}_\R\{hc=he_1+\ldots+he_n\}$, thus $N_\C$ is the
standard diagonal embedding of $\T_\C$ in $\T_\C^{n+1}$.  For a
simplex, every subset of $\{1,2,\ldots,n+1\}$ labels a face of
$\Delta$ except the set itself.  Thus $\mathcal
U_\Sigma=\C^{n+1}\setminus \{0\}$ and since the orbits of $N_\C$ on
this set are the intersections of complex lines through the origin
with $\mathcal U_\Sigma$, it follows that $X(\Sigma)=\CP^n$ as
desired.  With
$\lambda=\frac{1}{n+1}(\epsilon_1+\ldots+\epsilon_{n+1})$, one has
\[
\Delta = \{\nu\in \t^*\colon hp^*(\nu)+\lambda\ge 0\}.
\]

Consider the vertex $V$ of $\Delta$ labeled by
$S_V=\{1,2,\ldots,n\}$.  The open set $\mathcal U_V/N_\C$ is the
open set of classes $[z]$ where $z_{n+1}\not=0$.  Given
$[z]=[z_1:\ldots :z_{n+1}]$ in $\CP^n$ with $z_{n+1}\not=0$, the
associated affine coordinates are
\begin{equation}
w_1=z_1z_{n+1}^{-1},\ldots, w_n=z_nz_{n+1}^{-1}
\end{equation}
as the matrix $A_V$ is the column vector whose transpose is
$(1,\ldots,1)$. Let $\phi$ be the generator of $\n^*$ dual to the
generator $c$ of $\n$ and linearly identify $\n^*$ with $\R$ via the
assignment $\phi\mapsto 1$.  Then $t\phi\mapsto t$ gives a global
coordinate $\mu(\mathcal U_\Sigma)\to \R_{>0}$ and the
diffeomorphism $\mathcal U_\Sigma/N \to X(\Sigma)\times \mu(\mathcal
U_\Sigma)$ is expressed in terms of the coordinates
$(w_1,\ldots,w_n,t)$ for $\mathcal U_{S_V}/N_\C\times \mu(\mathcal
U_\Sigma)$ by $w$ as above and
\[
t=\langle \mu,c\rangle = \frac{1}{\hbar}|z_1|^2+\ldots
+\frac{1}{\hbar}|z_{n+1}|^2.
\]
In these coordinates, the image of $h\pi$ under the quotient map
lands in the sub-bundle $\bigwedge^2 T \C^n$ of $\bigwedge^2
T(\C^n\times \R_+)$, so in doing computations it suffices to
calculate only these components.  Under the quotient map
\[
\del_{z_k}\mapsto z_{n+1}^{-1}\del_{w_k}
\]
plus terms in $T\R_+$ for $k=1,2,\ldots, n$ and
\[
\del_{z_{n+1}}\mapsto \sum_{p=1}^n (-z_p)z_{n+1}^{-2}\del_{w_p}
\]
plus terms in $T\R_+$. Therefore,
\begin{eqnarray*}
h\pi & \mapsto &  \pi_{t\Delta}=hi\sum_{k=1}^n (|z_{n+1}|^2)^{-1}
\left(1+|z_k|^2|z_{n+1}|^{-2}\right) \del_{\wbar_k}\wedge \del_{w_k}
\\ & & \,\, + hi \sum_{k<\ell} \zbar_k z_\ell (|z_{n+1}|^2)^{-2}
\del_{\wbar_k}\wedge \del_{w_\ell} + c.c.
\end{eqnarray*}
It remains to determine the functions $z_1,\ldots,z_{n+1}$ in terms
of $w=(w_1,\ldots,w_n)$ and $t$.  Write $\|w\|^2$ for the sum
$|w_1|^2+\ldots+|w_n|^2$.  The point $(w,t)\in \C^n\times \R_+$
corresponds to the unique $N$-orbit in $\mathcal U_\Sigma$ which
passes through the point $(w_1F,\ldots,w_nF,F)$ where $F=F(w)$ is a
positive real number such that $t=\mu(w_1F,\ldots,w_nF,F)$.  In
other words
\[
t=\textstyle{\frac{1}{\hbar}}|w_1|^2 F^2+\ldots
\textstyle{\frac{1}{\hbar}}|w_n|^2F^2+\textstyle{\frac{1}{\hbar}}F^2,
\]
or equivalently, $F=\sqrt{t\hbar /(1+\|w\|^2)}$.  Substituting
$z_\ell=w_\ell F$ for $\ell=1,2,\ldots,n$ and $z_{n+1}=F$, gives
\begin{eqnarray}
\pi_{t\Delta}  & = & \frac{i}{(\hbar/h)t}\sum_{k=1}^n
(1+\|w\|^2)(1+|w_k|^2)\del_{\wbar_k}\wedge \del_{w_k} \label{hpiSigmaLocalCPn}\\
& & + \frac{i}{(\hbar/h)t} \sum_{k<\ell} w_k w_\ell
(1+\|w\|^2)\del_{\wbar_k}\wedge\del_{w_k} +c.c.\nonumber
\end{eqnarray}
Note that it is independent of the parameter $h$ as $h/\hbar$ is the
circumference of the unit circle.  The expression for $\pi_\Delta$
is then obtained by setting $t=1$ in (\ref{hpiSigmaLocalCPn}).

The components of the momentum map in terms of the basis
$u_1,\ldots,u_n$ are easily found to be
\begin{equation}\label{CPnMomentumComponents}
\langle
\Phi_\Delta(w),u_\ell\rangle=\frac{(n+1)|w_\ell|^2-1-\|w\|^2}{(n+1)(1+\|w\|^2)}.
\end{equation}
To compute this, observe that the map $hp^*$ sends $u_\ell\mapsto
\epsilon_\ell-\epsilon_{n+1}$ for each $\ell=1,\ldots,n$.  From the
relation
\[
hp^*(\Phi_\Delta(w))+\lambda=\sum_{\ell=1}^{n+1}
\textstyle{\frac{1}{\hbar}}|z_\ell|^2\epsilon_\ell
\]
one can solve for the components of the momentum map $\Phi_\Delta$.
Applying both sides to $e_\ell$ gives
\begin{equation}\label{MomentumRelation}
\langle
\Phi_\Delta(w),u_\ell\rangle+\textstyle{\frac{1}{n+1}}=\frac{|w_\ell|^2}{1+\|w\|^2}\text{
for }\ell=1,2,\ldots, n
\end{equation}
from which the equations
above follow.

The local formulas for $\Pi_\Sigma$ can be obtained using Theorem
\ref{local_representation}.  Here the matrix $B_V=1+A_VA_{V}^t$ is
\[
B_V=\begin{pmatrix} 2 & 1 & \dots  & 1 \\
                        1 & 2 &        & \vdots  \\
                  \vdots  &   & \ddots &   \\
                        1 & \dots &    & 2 \\
\end{pmatrix}
\]
whence
\begin{equation}\label{PiSigmaLocalCPn}
\Pi_\Sigma=\sum_{k=1}^n i2|w_k|^2\del_{\wbar_k}\wedge \del_{w_k} +
\sum_{k<\ell} i \wbar_k w_\ell \del_{\wbar_k}\wedge \del_{w_\ell} +
c.c.
\end{equation}
Moreover, by Corollary \ref{TopWedgePowerPiSigma},
\begin{equation}\label{PiSigmaTopPowerLocalCPn}
\textstyle{\frac{1}{n!}}\Pi_\Sigma^n = i^n(n+1)  |w_1|^2\dots
|w_n|^2 \del_{\wbar_1}\wedge \del_{w_1}\wedge\dots \wedge
\del_{\wbar_n}\wedge \del_{w_n}
\end{equation}
as the determinant of the matrix $B_V$ is $(n+1)$ (it is of the form
identity plus a rank one matrix whose non-zero eigenvalue is $n$).

\begin{prop}
In terms of the local coordinates $w_1,\ldots,w_n$,
\begin{equation}\label{PiDeltaTopPowerLocalCPn}
\frac{1}{n!}\pi_\Delta^n = \frac{i^n}{(\hbar/h)^n}(1+\|w\|^2)^{n+1}
\del_{\wbar_1}\wedge \del_{w_1} \wedge \dots \wedge
\del_{\wbar_n}\wedge \del_{w_n}.
\end{equation}
\end{prop}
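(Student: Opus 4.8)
The plan is to read off the (Hermitian) coefficient matrix of $\pi_\Delta$ from the local formula (\ref{hpiSigmaLocalCPn}) and then to apply the same top-power-equals-determinant principle used to prove Corollary \ref{TopWedgePowerPiSigma}. Setting $t=1$ in (\ref{hpiSigmaLocalCPn}) and collecting the diagonal together with the off-diagonal terms and their conjugates, I would write $\pi_\Delta = \sum_{p,q=1}^n iP_{pq}\,\del_{\wbar_p}\wedge\del_{w_q}$ where $P$ is the Hermitian matrix with diagonal entries $P_{kk} = \frac{1}{(\hbar/h)}(1+\|w\|^2)(1+|w_k|^2)$ and off-diagonal entries $P_{pq} = \frac{1}{(\hbar/h)}(1+\|w\|^2)\,\wbar_p w_q$. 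Since $1+|w_k|^2 = 1 + \wbar_k w_k$, all $n^2$ entries are captured by the single formula $P_{pq} = \frac{1}{(\hbar/h)}(1+\|w\|^2)(\delta_{pq} + \wbar_p w_q)$, that is, $P = \frac{1}{(\hbar/h)}(1+\|w\|^2)\,(I + \wbar\, w^{t})$, where $\wbar\, w^{t}$ is the rank-one matrix with entries $\wbar_p w_q$. The only care needed here is to interpret the shorthand $+\,c.c.$ correctly, so that the conjugate of the $(k<\ell)$ off-diagonal block supplies precisely the $(k>\ell)$ block and renders $P$ Hermitian; this is the same bookkeeping as in the proof of Theorem \ref{local_representation}.

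Next I would invoke the fiberwise linear-algebra fact underlying Corollary \ref{TopWedgePowerPiSigma}: for any bivector of the form $\sum_{p,q}iP_{pq}\,\del_{\wbar_p}\wedge\del_{w_q}$ with $P$ Hermitian, the top exterior power is $\frac{1}{n!}(\cdots)^n = i^n\det(P)\,\del_{\wbar_1}\wedge\del_{w_1}\wedge\dots\wedge\del_{\wbar_n}\wedge\del_{w_n}$. In Corollary \ref{TopWedgePowerPiSigma} this was obtained by pulling back a constant-coefficient structure along the exponential map; here the coefficient matrix $P=P(w)$ is not constant, but since the top wedge power is computed pointwise the same determinant expansion applies at each point, so the identity holds verbatim with $P$ replaced by $P(w)$.

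It then remains to compute $\det P$. Pulling out the scalar factor gives $\det P = (\hbar/h)^{-n}(1+\|w\|^2)^n\det(I + \wbar\, w^{t})$, and by the matrix determinant lemma $\det(I + \wbar\, w^{t}) = 1 + w^{t}\wbar = 1 + \sum_k |w_k|^2 = 1 + \|w\|^2$. Hence $\det P = (\hbar/h)^{-n}(1+\|w\|^2)^{n+1}$, and multiplying by $i^n$ yields exactly (\ref{PiDeltaTopPowerLocalCPn}). I expect the main (and essentially only) obstacle to be the first step, namely assembling the correct Hermitian matrix from the $+\,c.c.$ expression and confirming that the off-diagonal entries are $\wbar_p w_q$; once $P$ is identified, the rank-one determinant computation and the top-power formula are routine.
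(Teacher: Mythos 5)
Your proof is correct and takes essentially the same route as the paper: both read off the coefficient matrix of $\pi_\Delta$ as $\tfrac{1}{(\hbar/h)}(1+\|w\|^2)$ times an identity-plus-rank-one matrix, reduce the top wedge power to a pointwise determinant computation (the paper packages this as the Pfaffian of the associated skew-symmetric matrix, which equals $\det(1+ww^*)$, while you use the determinant of the Hermitian coefficient matrix directly), and evaluate the rank-one-perturbation determinant as $1+\|w\|^2$. The only cosmetic difference is that the paper factors out the conformal scalar $(1+\|w\|^2)$ before raising to the $n$-th power, whereas you keep it inside $P$ and extract it from $\det P$ at the end.
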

\begin{proof}
Regard $w=(w_1,\ldots,w_n)$ as a column vector. From
(\ref{PiSigmaLocalCPn}) we see that $\pi_\Delta$ has a local
expression of the form $\frac{i}{(\hbar/h)}(1+\|w\|^2)\xi$ where
$\xi$ is a bi-vector field on $\C^n$ of pure bi-degree $(1,1)$.
Thus, one can use the identity
$\pi_\Delta^n=\frac{i^n}{(\hbar/h)^n}(1+\|w\|^2)^n\xi^n$ to simplify
the calculation. In terms of the basis
$d\wbar_1,\ldots,d\wbar_n,dw_1,\ldots,dw_n$, $\xi$ is represented by
a matrix $X$ of the form
\[
X=\begin{pmatrix}    0 & X' \\
    -X' & 0 \\\end{pmatrix}
\]
relative to the splitting of the cotangent space into $(0,1)$ and
$(1,0)$ parts where $X'=1+ww^*$. Thus,
\[
\xi^n=n!\,\mathrm{Pfaff}(X)\,\del_{\wbar_1}\wedge \del_{w_1}\wedge
\ldots \wedge \del_{\wbar_n}\wedge \del_{w_n}.
\]
From the block form of $X$, it follows that
$\mathrm{Pfaff(X)}=\det(X')$.

The operator $ww^*$ has rank one when $w\not=0$ and a generator of
the non-zero eigenspace is the vector $w$ with associated eigenvalue
$\|w\|^2$.  Thus, $\det(X')=\det(1+ww^*)=(1+\|w\|^2)$ and this
relationship holds as $w\to 0$.  The result follows immediately.
\end{proof}

\begin{theorem}Let $V$ be a vertex of $\Delta$.  For convenience, assume that it is labeled by $\{1,2,\ldots,n\}$.
Let $R_1,\dots,R_n$ denote the vector fields on $\CP^n$
corresponding to the vectors $u_1,\ldots,u_n\in \t$ and write $u$
for the frame $u=(u_1,\ldots,u_n)$. Then the modular vector field
$\theta_\mu$ of $\Pi_\Sigma$ associated to the Delzant volume form
$\mu=\textstyle{\frac{1}{n!}}\omega_\Delta^n$ has the form
\begin{equation}\label{ModularVectorFieldonCPn}
\theta_\mu=\frac{(n+1)}{h/\hbar}\sum_{\ell=1}^n \langle
\Phi_\Delta,(uB_{V})_\ell\rangle \, R_\ell
\end{equation}
where $B_V$ is the integral matrix attached to $V$.
\end{theorem}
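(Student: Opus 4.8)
The plan is to apply the mechanism from the proof of Theorem \ref{modular_class_nonzero}. Since $\mu=\frac{1}{n!}\omega_\Delta^n$ is a global volume form on $\CP^n$, its dual $2n$-vector (in the sense $\langle\mu,\hat\mu\rangle=1$) is $\hat\mu=\frac{1}{n!}\pi_\Delta^n$, because $\pi_\Delta$ is the nondegenerate structure inverse to $\omega_\Delta$. On the open leaf $U=\{w_1\cdots w_n\neq 0\}$ one may then write $\frac{1}{n!}\Pi_\Sigma^n=a\,\hat\mu$ for a positive function $a$, and exactly as in Theorem \ref{modular_class_nonzero} this yields $\theta_\mu=-\Pi_\Sigma^\#(d\log a)$ on $U$. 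First I would read off $a$ by dividing the coefficient in (\ref{PiSigmaTopPowerLocalCPn}) by that in (\ref{PiDeltaTopPowerLocalCPn}), obtaining
\[
a=\frac{(n+1)(\hbar/h)^n\,|w_1|^2\cdots|w_n|^2}{(1+\|w\|^2)^{n+1}}.
\]
The precise constant is immaterial: it drops out of the logarithmic derivative, leaving
\[
d\log a=\sum_{\ell=1}^n d\log|w_\ell|^2-(n+1)\,d\log(1+\|w\|^2).
\]

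The next step is to apply $\Pi_\Sigma^\#$ to each piece. For the first sum I would reuse the computation of $\theta_\lambda$ in the proof of the lower-bound theorem of Section \ref{TheModularClass}, which gives $\Pi_\Sigma^\#\bigl(\sum_\ell d\log|w_\ell|^2\bigr)=\frac{1}{h/\hbar}\sum_{q}\bigl(\sum_\ell B_{\ell q}\bigr)R_q=\frac{n+1}{h/\hbar}\sum_q R_q$, using that every column sum of $B_V$ equals $n+1$ for $\CP^n$. For the second piece I would compute directly from the local form (\ref{PiSigmaLocalCPn}): contraction gives $\Pi_\Sigma^\#(d\wbar_k)=\sum_q iB_{kq}\wbar_k w_q\del_{w_q}$ and $\Pi_\Sigma^\#(dw_k)=-\sum_p iB_{pk}\wbar_p w_k\del_{\wbar_p}$, so that pairing against $d(1+\|w\|^2)=\sum_k(\wbar_k\,dw_k+w_k\,d\wbar_k)$ and invoking the symmetry of $B_V$ collapses the two halves into an $R_q$ term:
\[
\Pi_\Sigma^\#\bigl(d\log(1+\|w\|^2)\bigr)=\frac{1}{(h/\hbar)(1+\|w\|^2)}\sum_q\Bigl(\sum_k B_{kq}|w_k|^2\Bigr)R_q,
\]
where for $\CP^n$ the inner coefficient simplifies to $\sum_k B_{kq}|w_k|^2=\|w\|^2+|w_q|^2$.

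Assembling $\theta_\mu=-\Pi_\Sigma^\#(d\log a)$ and clearing the common denominator $1+\|w\|^2$ then collapses the two contributions to
\[
\theta_\mu=\frac{n+1}{h/\hbar}\sum_{\ell=1}^n\frac{|w_\ell|^2-1}{1+\|w\|^2}\,R_\ell,
\]
which is the crux of the calculation. It remains to recognize the scalar coefficient as the advertised momentum combination. Writing $(uB_V)_\ell=\sum_k u_k (B_V)_{k\ell}$ and using the explicit entries $(B_V)_{k\ell}=1+\delta_{k\ell}$ together with the momentum components (\ref{CPnMomentumComponents}), a short computation gives $\langle\Phi_\Delta,(uB_V)_\ell\rangle=\sum_k(1+\delta_{k\ell})\langle\Phi_\Delta,u_k\rangle=\frac{|w_\ell|^2-1}{1+\|w\|^2}$, matching the coefficient above and yielding (\ref{ModularVectorFieldonCPn}). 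Finally, since the $R_\ell$ are globally defined and the coefficients $\langle\Phi_\Delta,(uB_V)_\ell\rangle$ are smooth on all of $\CP^n$ (being linear combinations of momentum components), the identity extends by continuity from $U$ to all of $\CP^n$, on which $\theta_\mu$ is in any case globally defined because $\mu$ is.

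The main obstacle I anticipate is the direct evaluation of $\Pi_\Sigma^\#\bigl(d\log(1+\|w\|^2)\bigr)$, since $\|w\|^2$ couples all the coordinates and the bookkeeping of the $B_V$-weighted sums must be handled carefully; the subsequent recognition of $\frac{|w_\ell|^2-1}{1+\|w\|^2}$ as $\langle\Phi_\Delta,(uB_V)_\ell\rangle$ is the conceptual payoff that threads the matrix $B_V$ into the intrinsic formula.
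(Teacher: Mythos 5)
Your proposal is correct and follows essentially the same route as the paper: both work in the vertex chart on the open leaf, set $\theta_\mu=-\Pi_\Sigma^\#(d\log a)$ with $a$ obtained as the ratio of (\ref{PiSigmaTopPowerLocalCPn}) to (\ref{PiDeltaTopPowerLocalCPn}), and extend to all of $\CP^n$ by smoothness and density. The only difference is bookkeeping order: the paper substitutes the momentum relation (\ref{MomentumRelation}) into $d\log a$ \emph{before} applying the anchor map, whereas you apply the anchor to the two logarithmic pieces first and only at the end recognize the coefficient $\frac{|w_\ell|^2-1}{1+\|w\|^2}$ as $\langle\Phi_\Delta,(uB_V)_\ell\rangle$; the two computations agree.
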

\begin{proof}As $\theta_\mu$ is smooth, it suffices to establish
(\ref{ModularVectorFieldonCPn}) in an affine coordinate chart.  Let
$V$ be the vertex of $\Delta$ labeled by $S_V=\{1,2,\ldots,n\}$ and
let $w_1,\ldots,w_n$ be the associated holomorphic coordinates.  Let
$\Omega_\Sigma$ be the symplectic form of $\Pi_\Sigma$ on the open
leaf $\{w_1\dots w_n\not=0\}$ and set
$a=\omega_\Delta^n/\Omega_\Sigma^n=\Pi_\Sigma^n/\pi_\Delta^n$ so
that $\theta_\mu=\Pi_\Sigma^\#(-d\log a)$ there. Then
\[
a=(n+1)(\hbar/h)^n\frac{|w_1|^2\dots |w_n|^2}{(1+\|w\|^2)^{n+1}}
\]
by (\ref{PiSigmaTopPowerLocalCPn}) and
(\ref{PiDeltaTopPowerLocalCPn}) and therefore
\begin{eqnarray*}
d\log a & = &
\sum_{\ell=1}^n\frac{d|w_\ell|^2}{|w_\ell|^2}-(n+1)\sum_{\ell=1}^n
\frac{d |w_\ell|^2}{(1+\|w\|^2)} \\
& = & \sum_{\ell=1}^n
\left(1-\frac{(n+1)|w_\ell|^2}{(1+\|w\|^2)}\right)\frac{d|w_\ell|^2}{|w_\ell|^2}.
\end{eqnarray*}
By factoring out $-(n+1)$ from each coefficient of
$d|w_\ell|^2/|w_\ell|^2=d\log |w_\ell|^2$, ones obtains a familiar
term.  Indeed
\begin{eqnarray*}
d\log a & = & -(n+1)\sum_{\ell=1}^n
\left(\frac{|w_\ell|^2}{(1+\|w\|^2)}-\frac{1}{n+1}\right)d\log|w_\ell|^2
\\
& = & -(n+1)\sum_{\ell=1}^n\langle \Phi_\Delta,u_\ell\rangle
d\log|w_\ell|^2
\end{eqnarray*}
using (\ref{MomentumRelation}). Now
\begin{eqnarray*}
(i\wbar_p w_q \del_{\wbar_p}\wedge\del_{w_q})^\#(d \log |w_\ell|^2)
& = & i\frac{\wbar_p}{\wbar_\ell}\delta_{p\ell} w_q \del_{w_q}-i\frac{w_q}{w_\ell}\delta_{q\ell} \wbar_p \del_{\wbar_p} \\
\end{eqnarray*}
for each $p,q=1,2,\ldots,n$.  Using the symmetry of $B_V$ and the
fact that $B_V$ and $\Phi_\Delta$ are real, one obtains
\begin{eqnarray*}
\Pi_\Sigma^\#(-d\log a) & = & (n+1)\sum_{p,q=1}^n B_{pq} \langle
\Phi_\Delta,u_p\rangle w_q\del_{w_q}+c.c. \\
& = & \frac{(n+1)}{(h/\hbar)}\sum_{q=1}^n \langle
\Phi_\Delta,(uB_V)_q\rangle R_q.
\end{eqnarray*}
Thus, (\ref{ModularVectorFieldonCPn}) is established.
\end{proof}

\begin{figure}\label{Zeros_of_Modular_Field}
\[
\begin{xy}
\xyimport(31,21){\includegraphics[scale=0.8]{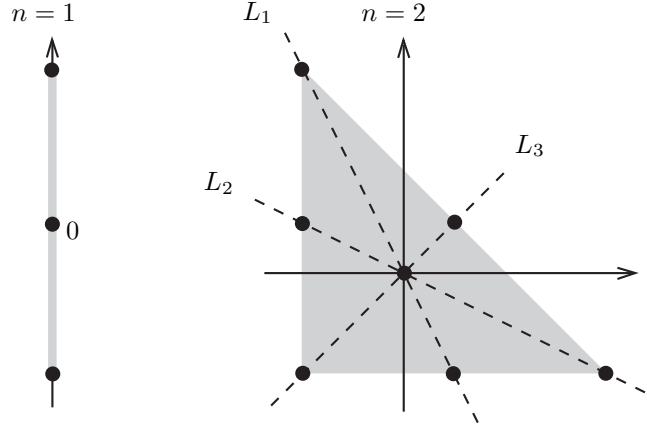}}
,(0,22)*{n=1},(18,22)*{n=2},(1.5,10.5)*{0},(11,22)*{L_1}
,(9,13)*{L_2},(25,15)*{L_3}
\end{xy}\]
\caption{Delzant polytopes for $\CP^n$ for $n=1$ and $n=2$. The
points marked show the momentum map image of the zero set of the
modular vector field $\theta_\mu$ of $\Pi_\Sigma$, computed with
respect to $\mu=\frac{1}{n!}\omega_\Delta^n$, the Delzant Liouville
volume form.  For $n=2$, the annihilator subspaces of $(uB_V)_1$,
$(uB_V)_2$, and $(uB_V)_1-(uB_V)_2$ are indicated by the dashed
lines $L_1$, $L_2$, and $L_3$, respectively.}
\end{figure}

Recall that the centroid of $1$-simplex is its midpoint and the
centroid of an $n$-simplex is defined inductively as the
intersection of the lines connecting the vertices to the centroid of
their opposing facets.  It is a fact that the centroid of a simplex
may be computed as the mean of its vertices.  By construction, the
centroid of $\Delta$ is the origin in $\t^*$.  The following theorem
relates the centroids of the faces of $\Delta$ with the zero locus
of the modular vector field $\theta_\mu$ and is illustrated in
Figure 1.

\begin{theorem}
If $F$ is a face of $\Delta$, then on the $T_\C$-orbit
$\Phi_\Delta^{-1}(F^\circ)$, $\theta_\mu=0$ precisely on the set
$\Phi_\Delta^{-1}(c_F)$ where $c_F$ is the centroid of $F$.
\end{theorem}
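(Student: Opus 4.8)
The plan is to feed the momentum data into (\ref{ModularVectorFieldonCPn}) and read off its zero set orbit by orbit. First I would record the momentum map in a symmetric form. Introduce the globally defined Fubini--Study barycentric coordinates $x_\ell = |z_\ell|^2/(|z_1|^2+\dots+|z_{n+1}|^2)$ for $\ell=1,\dots,n+1$; these satisfy $\sum_\ell x_\ell=1$, and by (\ref{MomentumRelation}) together with $u_{n+1}=-(u_1+\dots+u_n)$ they give $\langle\Phi_\Delta,u_\ell\rangle = x_\ell-\tfrac{1}{n+1}$ for \emph{every} $\ell=1,\dots,n+1$. Since $A_V=(1,\dots,1)^t$ makes $B_V=1+A_VA_V^t$ have entries $(B_V)_{pq}=1+\delta_{pq}$, the frame vectors are $(uB_V)_\ell=u_\ell+(u_1+\dots+u_n)=u_\ell-u_{n+1}$, so the $\ell$-th coefficient in (\ref{ModularVectorFieldonCPn}) equals $\langle\Phi_\Delta,u_\ell-u_{n+1}\rangle=x_\ell-x_{n+1}$.

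Next I would put $\theta_\mu$ into a manifestly symmetric form. Writing $R_{n+1}$ for the field attached to $u_{n+1}$, the relation $\sum_{\ell=1}^{n+1}R_\ell=0$ (a consequence of $\sum_\ell u_\ell=0$) converts $\sum_{\ell=1}^n(x_\ell-x_{n+1})R_\ell$ into $\sum_{\ell=1}^{n+1}x_\ell R_\ell$, so that
\[
\theta_\mu=\frac{(n+1)}{h/\hbar}\sum_{\ell=1}^{n+1}x_\ell R_\ell
\]
globally on $\CP^n$ (valid everywhere since (\ref{ModularVectorFieldonCPn}) holds by smoothness). This reformulation is the crux of the argument: both $x_\ell$ and $R_\ell$ vanish precisely along the coordinate locus $\{z_\ell=0\}$, so restricting to a fixed orbit eliminates the unwanted terms uniformly and treats all faces on an equal footing.

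Finally I would fix a face $F$ and let $L\subseteq\{1,\dots,n+1\}$ index the barycentric coordinates positive on $F^\circ$; by the correspondence between faces and orbits the leaf $\Phi_\Delta^{-1}(F^\circ)$ is $\{[z]:z_\ell\neq 0\Leftrightarrow\ell\in L\}$, of complex dimension $|L|-1$. There $x_\ell=R_\ell=0$ for $\ell\notin L$, so $\theta_\mu=\tfrac{(n+1)}{h/\hbar}\sum_{\ell\in L}x_\ell R_\ell$. To extract the zero set I would use the $S_{n+1}$-symmetry of the whole configuration to reduce to the case $n+1\in L$ and pass to the chart $z_{n+1}\neq 0$, where $R_\ell=(h/\hbar)iw_\ell\del_{w_\ell}+c.c.$ for the distinct nonzero coordinates $w_\ell$ ($\ell\in L\setminus\{n+1\}$) are manifestly linearly independent; then $\theta_\mu=0$ forces $x_\ell=x_{n+1}$ for each such $\ell$, hence all $x_\ell$ with $\ell\in L$ coincide, and since they sum to $1$ this gives $x_\ell=1/|L|$ on $L$. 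That is exactly the barycentric description of the centroid $c_F$ (the mean of the vertices $v_\ell$, $\ell\in L$, of $F$), so the zero locus is $\Phi_\Delta^{-1}(c_F)$. I expect the step requiring the most care to be this last independence analysis, in particular the orbits lying at infinity of a chosen chart (the case $n+1\notin L$): the symmetric reformulation together with the $S_{n+1}$-action is what disposes of it cleanly, the alternative being to verify directly that $\sum_{\ell\in L}R_\ell=0$ is the only relation among the restricted fields, using that the orbit has complex dimension $|L|-1$.
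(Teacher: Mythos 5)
Your argument is correct, but it is organized genuinely differently from the paper's, and the difference is worth recording. The paper stays with (\ref{ModularVectorFieldonCPn}) in a single vertex chart: after relabeling so that $F$ is cut out by the facets $1,\ldots,s$, it observes that $R_1,\ldots,R_s$ vanish on the orbit while $R_{s+1},\ldots,R_n$ stay independent, so the image of the zero locus is the solution set of the affine system $\langle \eta,(uB_V)_q\rangle=0$ for $q=s+1,\ldots,n$ together with the facet equations; it then computes $c_F$ explicitly as the mean of the vertices $V_{s+1},\ldots,V_{n+1}$ and verifies, entry by entry using $B_{pq}=1+\delta_{pq}$, that $c_F$ is the unique solution. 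You bypass that verification by first recasting the modular field in the symmetric form $\theta_\mu=\frac{(n+1)}{h/\hbar}\sum_{\ell=1}^{n+1}x_\ell R_\ell$ — your intermediate identities $\langle\Phi_\Delta,u_\ell\rangle=x_\ell-\frac{1}{n+1}$ for all $\ell$ (including $\ell=n+1$), $(uB_V)_\ell=u_\ell-u_{n+1}$, and $\sum_{\ell=1}^{n+1}R_\ell=0$ all check out — after which the zero condition on the orbit indexed by $L$ reads ``all $x_\ell$, $\ell\in L$, equal,'' i.e.\ $x_\ell=1/|L|$, and the identification with $c_F$ is automatic since $\Phi_\Delta$ is affine in the barycentric coordinates. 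Your route buys uniformity across faces (and makes the paper's corollary on the torus structure of the zero locus transparent), at the cost of needing either the $S_{n+1}$-equivariance of the whole configuration — true because the Delzant data for the centered simplex is permutation-symmetric, but you should say explicitly that $\omega_\Delta$, not just $\Pi_\Sigma$, is invariant, since $\theta_\mu$ depends on both — or your fallback argument that $\sum_{\ell\in L}R_\ell=0$ is the only relation among the restricted fields, which indeed follows from the dimension count you cite. Two harmless imprecisions: $R_\ell$ vanishes on $\{z_\ell=0\}$ but \emph{also} at the opposite fixed point $[0:\cdots:1:\cdots:0]$, so ``precisely'' is wrong, though only the inclusion and the on-orbit independence are actually used; and (\ref{MomentumRelation}) is a chart statement, so your global formula for $\langle\Phi_\Delta,u_\ell\rangle$ deserves a word about extension by continuity (or a direct derivation from $hp^*(\Phi_\Delta)+\lambda=J$ on the reduction level set).
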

\begin{proof}
The claim certainly holds for the fixed points as the vector fields
$R_1,\ldots,R_n$ all vanish there.  For $F=\Delta$ itself, the claim
also holds because $R_1,\ldots,R_n$ are linearly independent on the
open $\T_\C$-orbit $\Phi_\Delta^{-1}(\Delta^\circ)$ so $\langle
\Phi_\Delta,(uB_V)_q\rangle=0$ for each $q=1,2,\ldots,n$.  As $B_V$
is invertible, $uB_V$ is also a frame, so this condition stipulates
that $\Phi_\Delta=0$, i.e., the image under $\Phi_\Delta$ of the
zero locus of $\theta_\mu$ on $\Phi_\Delta^{-1}(\Delta^\circ)$ is
$0\in \t^*$ which is the centroid of $\Delta$ by construction.

Now suppose $F$ is labeled by $\{1,2,\ldots,s\}$.  Then $R_1,\ldots,
R_s$ vanish on $\Phi_\Delta^{-1}(F^\circ)$.   Given the form of
$B_V$, the equations $\langle \eta,u_p\rangle =\frac{1}{n+1}$ for
$p=1,\ldots,s$ and $\langle \eta,(uB_V)_q\rangle=0$ for
$q=s+1,s+2,\ldots,n$ are independent. It suffices then to show that
$\eta=c_F$ is a solution as this solution will be unique.  Let
$V_\ell$ denote the vertex opposite to the facet of the simplex
$\Delta$ labeled by $\ell$. Then the vertices of $F$ are
$\{V_{s+1},V_{s+2},\ldots,V_{n+1}\}$. Now
$V_k=\eta_k-\frac{1}{n+1}\sum_{r=1}^n\eta_r$ for each
$k=1,2,\ldots,n$ while $V_{n+1}=-\frac{1}{n+1}\sum_{r=1}^n \eta_r$.
Therefore, the centroid of $F$ is
\begin{eqnarray*}
c_F & = & \frac{1}{n-s+1}\left(V_{s+1}+V_{s+2}+\ldots
+V_{n+1}\right)
\\
& = &
\frac{1}{n-s+1}\left(\sum_{t=1}^{n-s}\eta_{s+t}-\frac{n-s+1}{n+1}\sum_{r=1}^n\eta_r\right).
\end{eqnarray*}
By construction $\langle c_F,u_p\rangle=\frac{1}{n+1}$ for each
$p=1,2,\ldots,s$. If $q\in\{s+1,s+2,\ldots,n\}$ then
\[
\langle c_F,(uB_V)_q\rangle  =  \frac{1}{n-s+1}\sum_{p=1}^n
B_{pq}\left\langle
\sum_{t=1}^{n-s}\eta_{s+t}-\frac{n-s+1}{n+1}\sum_{r=1}^n\eta_r,
u_p\right\rangle \] \[  =  \frac{1}{n-s+1}\left(\sum_{r=1}^{s}
B_{rq}\left(-\frac{n-s+1}{n+1}\right)+\sum_{t=1}^{n-s}B_{(s+t),q}\left(1-\frac{n-s+1}{n+1}\right)\right).
\]
Given that $B_{pq}=1$ if $q\not=p$ and $B_{pp}=2$, it follows that
\begin{eqnarray*}
\langle c_F,(uB_V)_q\rangle & = &
\frac{1}{n-s+1}\left(s\left(-\frac{n-s+1}{n+1}\right)+(n-s+1)\left(1-\frac{n-s+1}{n+1}\right)\right)
\\
& = & 0
\end{eqnarray*}
for each $q\in\{s+1,s+2,\ldots,n\}$.

Therefore, the image under $\Phi_\Delta$ of the zero locus of
$\theta_\mu$ on $\Phi_\Delta^{-1}(F^\circ)$ is precisely the
centroid $c_F$ of $F$.  This completes the proof.
\end{proof}

\begin{corollary}
The zero locus of $\theta_\mu$ on $\CP^n$ is a disjoint union of
compact tori, one of dimension $\ell$ for each face $F$ of dimension
$\ell$ of $\Delta$.
\end{corollary}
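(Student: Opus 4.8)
The plan is to combine the preceding theorem with the decomposition of $\CP^n$ into $T_\C$-orbits. By the Delzant construction and the orbit–face correspondence recorded earlier, $\CP^n$ is the disjoint union $\bigsqcup_F \Phi_\Delta^{-1}(F^\circ)$ over all faces $F$ of $\Delta$, where $\Phi_\Delta^{-1}(F^\circ)$ is the $T_\C$-orbit of complex dimension $\dim F$. Since the preceding theorem identifies the zero set of $\theta_\mu$ on each such orbit as exactly $\Phi_\Delta^{-1}(c_F)$, and since the $T_\C$-orbits are precisely the symplectic leaves of $\Pi_\Sigma$, the full zero locus of $\theta_\mu$ is $\bigsqcup_F \Phi_\Delta^{-1}(c_F)$. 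First I would record that this union is genuinely disjoint, which is immediate because the orbits themselves are pairwise disjoint and distinct faces yield distinct centroids.

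It then remains to identify each fiber $\Phi_\Delta^{-1}(c_F)$ as a compact torus of dimension $\ell=\dim F$. The essential point is that the centroid $c_F$, being the mean of the vertices of the simplex $F$, lies in the relative interior $F^\circ$. Because $\Phi_\Delta$ is the momentum map for the Hamiltonian action of the compact torus $T$ on $(\CP^n,\omega_\Delta)$, its fiber over a point in the relative interior of a face of dimension $\ell$ is a single $T$-orbit of dimension $\ell$: the stabilizer is the $(n-\ell)$-dimensional sub-torus generated by the cone of $\Sigma$ dual to $F$. Consequently $\Phi_\Delta^{-1}(c_F)$ is a torus of dimension $\ell$, and for a vertex this degenerates correctly to a single fixed point, a $0$-dimensional torus.

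The main obstacle, such as it is, lies in this last dimension count, which must invoke the standard structure theory of Hamiltonian torus actions on toric symplectic manifolds to conclude that the fiber of $\Phi_\Delta$ over a relative-interior point of an $\ell$-dimensional face is a single compact $T$-orbit of dimension $\ell$. This is already the content of the orbit–face correspondence established through $\Phi_\Delta$ in the Delzant section, so no new geometry is required; the work is purely in recognizing that the per-orbit conclusion of the theorem globalizes over the leaf decomposition of $\CP^n$ without further hypotheses.
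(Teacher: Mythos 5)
Your proposal is correct and follows essentially the same route as the paper: the paper likewise decomposes the zero locus over the faces via the preceding theorem and invokes the standard fact (attributed there to the symplectic convexity theorem) that the fiber of $\Phi_\Delta$ over a point in the relative interior of an $\ell$-dimensional face is a compact torus of dimension $\ell$. Your additional observations --- that $c_F \in F^\circ$ and that disjointness comes from the orbit decomposition --- are exactly the implicit details behind the paper's two-line argument.
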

\begin{proof}
By the symplectic convexity theorem, the pre-image under
$\Phi_\Delta$ of a point interior to a face $F$ of $\Delta$ of
dimension $\ell$ is a real torus of dimension $\ell$.  Thus, the
claim follows immediately from the previous theorem.
\end{proof}

What is intriguing about the preceding computations is that the
Poisson structures $\pi_\Delta$ and $\Pi_\Sigma$ are not compatible
when $n>1$ (for $n=1$ the Schouten bracket on $\mathcal V^2$ is
trivial) and yet the modular vector field for $\Pi_\Sigma$ relative
to the Delzant Liouville volume $\frac{1}{n!}\omega_\Delta^{-n}$ can
be computed in terms of moment data.  Although the modular class has
been studied and used as a tool in applications of Poisson geometry,
good examples are lacking, and this formula contributes a natural
family.


\end{document}